\documentclass[12pt,reqno]{amsart}

\usepackage[margin=1in]{geometry}
\usepackage{amsmath,amsfonts,amssymb}
\usepackage{graphics,graphicx,color}
\usepackage{cite}
\usepackage{hyperref}
\usepackage{txfonts}

\theoremstyle{plain}
\newtheorem{theorem}{Theorem}[section]
\newtheorem*{theorem*}{Theorem}
\newtheorem{proposition}{Proposition}[section]
\newtheorem{lemma}{Lemma}[section]

\theoremstyle{definition}

\theoremstyle{remark}
\newtheorem{remark}{Remark}[section]
\newtheorem*{arguments}{Heuristic arguments}

\numberwithin{equation}{section}

\newcommand{\e}{^\varepsilon}
\newcommand{\eps}{{\varepsilon}}
\newcommand{\ds}{\displaystyle}
\newcommand{\I}{\mathcal{I}\e}
\newcommand{\dist}{\mathrm{dist}}
\renewcommand{\a}{\alpha}
\renewcommand{\b}{\beta}
\renewcommand{\phi}{\varphi}
\newcommand{\cupl}{\bigcup\limits}
\newcommand{\suml}{\sum\limits}
\newcommand{\intl}{\int\limits}
\newcommand{\liml}{\lim\limits}
\newcommand{\maxl}{\max\limits}
\newcommand{\minl}{\min\limits}

\begin{document}

\noindent {\huge {Periodic elliptic operators with asymptotically
preassigned spectrum}} \bigskip

\noindent{\large {Andrii Khrabustovskyi}}\medskip

\textit{\begin{flushleft} \footnotesize Mathematical Division, B.
Verkin Institute for Low Temperature Physics and Engineering of
the National Academy of Sciences of Ukraine, Lenin avenue 47,
Kharkiv 61103, Ukraine, tel.: +38 057 3410986
\\
e-mail: andry9@ukr.net
\end{flushleft}}
\
\\
{\small\textbf{Abstract.} We deal with operators in $\mathbb{R}^n$
of the form
$$\mathbf{A}=-{1\over
\mathbf{b}(x)}\sum\limits_{k=1}^n\ds{\partial\over\partial
x_k}\left(\mathbf{a}(x){\partial \over\partial x_k}\right)$$ where
$\mathbf{a}(x),\mathbf{b}(x)$ are positive, bounded and periodic
functions. We denote by $\mathbf{L}_{\mathrm{per}}$ the set of
such operators. The main result of this work is as follows: for an
arbitrary $L>0$ and for arbitrary pairwise disjoint intervals
$(\alpha_j,\beta_j)\subset[0,L]$, $j=1,\dots,m$ ($m\in\mathbb{N}$)
we construct the family of operators $\{\mathbf{A}^\varepsilon\in
\mathbf{L}_{\mathrm{per}}\}_{\varepsilon}$ such that the spectrum
of $\mathbf{A}^\varepsilon$ has exactly $m$ gaps in $[0,L]$ when
$\varepsilon$ is small enough, and these gaps tend to the
intervals $(\alpha_j,\beta_j)$ as $\varepsilon\to 0$. The idea how
to construct the family $\left\{\mathbf{A}\e\right\}_\eps$ is
based on methods of the homogenization theory.
\medskip

\noindent Keywords: periodic elliptic operators, spectrum, gaps,
homogenization.}
\\ \\


\section*{\label{sec0}Introduction}

Our research is inspired by the following well-known result of Y.
Colin de Verdi\`{e}re \cite{CDV1}: for arbitrary numbers
$0=\lambda_1<\lambda_2<\dots<\lambda_m$ ($m\in\mathbb{N}$) and
$n\in\mathbb{N}\setminus\left\{1\right\}$ there is a
$n$-dimensional compact Riemannian manifold $M$ such that the
first $m$ eigenvalues of the corresponding Laplace-Beltrami
operator $-\Delta_M$ are exactly $\lambda_1,\dots,\lambda_m$. In
the work \cite{Khrab6} we obtained an analogue of this fact for
non-compact periodic manifolds: for an arbitrary $m$ pairwise
disjoint finite intervals on the positive semi-axis ($m\in
\mathbb{N}$) a periodic Riemannian manifold is constructed such
that the spectrum of the corresponding Laplace-Beltrami operator
has at least $m$ gaps, moreover the first $m$ gaps are close (in
some natural sense) to these preassigned intervals.

The goal of the present work is to solve a similar problem for the
following operators in $\mathbb{R}^n$ ($n\geq 2$):
\begin{gather*}
\mathbf{A}=-\mathbf{b}^{-1}\mathrm{div}\left(\mathbf{a}\nabla\right)=-{1\over
\mathbf{b}(x)}\suml_{k=1}^n\ds{\partial\over\partial
x_k}\left(\mathbf{a}(x){\partial \over\partial x_k}\right),\quad
\mathbf{a},\mathbf{b}\in \mathbf{H}_{\mathrm{per}}
\end{gather*}
where $\mathbf{H}_{\mathrm{per}}$ is a set of measurable real
functions in $\mathbb{R}^n$ satisfying the conditions
\begin{gather*}\mathbf{f}\in \mathbf{H}_{\mathrm{per}}:\quad\begin{cases}
\exists C^-,C^+>0:\quad C^-\leq \mathbf{f}(x)\leq C^+,\ \forall
x\in \mathbb{R}^n&\text{(boundedness from above and form below)}
\\\forall i\in \mathbb{Z}^n,\ \forall x\in \mathbb{R}^n:\quad
\mathbf{f}(x+i)=\mathbf{f}(x)& \text{(periodicity)}
\end{cases}
\end{gather*}
The operator $\mathbf{A}$ acts in the space
$L_{2,\mathbf{b}}(\mathbb{R}^n)=\left\{u\in L_2(\mathbb{R}^n),\
\|u\|^2_{L_{2,\mathbf{b}}(\mathbb{R}^n)}=\intl_{\mathbb{R}^n}|u(x)|^2
\mathbf{b}(x)dx\right\}$, it is self-adjoint and positive. We
denote by $\mathbf{L}_{\mathrm{per}}$ the set of such operators.

Operators of this type occur in various areas of physics, for
example in the case $n=3$ the operator $\mathbf{A}$ governs the
propagation of acoustic waves in a medium with periodically
varying mass density $(\mathbf{a}(x))^{-1}$ and compressibility
$\mathbf{b}(x)$.

It is well-known (see e.g. \cite{Kuchment}) that the spectrum
$\sigma(\mathbf{A})$ of the operator $\mathbf{A}\in
\mathbf{L}_{\mathrm{per}}$ has band structure, i.e.
$\sigma(\mathbf{A})$ is the union of compact intervals
$[a_k^-,a_k^+]\subset [0,\infty)$ called \textit{bands}
($a_0^-=0$, $a_k^-\underset{k\to\infty}\nearrow \infty$). In
general the bands may overlap. The open interval $(\a,\b)$ is
called a \textit{gap} if $(\a,\b)\cap
\sigma(\mathbf{A})=\varnothing$ and $\a,\b\in \sigma(\mathbf{A})$.

The main result of this work is the following


\begin{theorem}[Main Theorem]\label{th0}
Let $L>0$ be an arbitrary number and let $(\a_j,\b_j)$
($j={1,\dots,m},\ m\in \mathbb{N} $) be arbitrary intervals
satisfying
\begin{gather}\label{intervals}
0<\a_1,\quad \a_j<\b_{j}< \a_{j+1},\ j=\overline{1,m-1},\quad
\a_m<\b_{m}<L
\end{gather}
Let $n\in\mathbb{N}\setminus\{1\}$.

Then one can construct the family of functions
$\left\{\mathbf{a}\e\in \mathbf{H}_{\mathrm{per}}\right\}_{\eps}$
and the function $\mathbf{b}\in \mathbf{H}_{\mathrm{per}}$ such
that the spectrum of the operator
$\mathbf{A}\e=\mathbf{b}^{-1}\mathrm{div} (\mathbf{a}\e\nabla)$
has the following structure in the interval $[0,L]$ when $\eps$ is
small enough:
\begin{gather}\label{spec1}
\sigma(\mathbf{A}\e)\cap[0,L]=[0,L]\setminus
\left(\cupl_{j=1}^{m}(\a_j^\eps,\b_j^\eps)\right)
\end{gather}
where the intervals $(\a_j\e,\b_j\e)$ satisfy
\begin{gather}\label{spec2}
\forall j=1,\dots,m:\quad \liml_{\eps\to 0}\a_j^\eps=\a_j,\
\liml_{\eps\to 0}\b_j^\eps=\b_j
\end{gather}
Moreover, $\mathbf{a}\e(x),\ \mathbf{b}(x)$ are step-functions
having at most $m+1$ values.
\end{theorem}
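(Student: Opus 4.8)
The plan is to realize each prescribed interval $(\a_j,\b_j)$ as a spectral gap produced by a weakly coupled resonator inserted, together with the other $m-1$ resonators, into the period cell $Y=[0,1)^n$, and then to let $\eps\to0$ so that these ``resonator gaps'' sharpen to their limiting positions while all remaining bands fill $[0,L]$. Concretely, I would fix inside $Y$ pairwise disjoint ``cores'' $\mathcal C_j=B(y_j,r_j)$, surround each by a thin ``shell'' $\mathcal S_j\e=B(y_j,r_j+\eps\delta_j)\setminus\overline{\mathcal C_j}$ (still pairwise disjoint for small $\eps$), call the complement the ``matrix'', and set $\mathbf a\e=1$ on the matrix and on the cores, $\mathbf a\e=c_j\eps$ on $\mathcal S_j\e$, while $\mathbf b=1$ off the cores and $\mathbf b=b_j$ on $\mathcal C_j$; all extended $\mathbb Z^n$-periodically. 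Then $\mathbf a\e$ takes the values $1,c_1\eps,\dots,c_m\eps$ and $\mathbf b$ the values $1,b_1,\dots,b_m$, so both are step-functions with at most $m+1$ values; the constants $c_j,b_j>0$ and the (fixed) radii $r_j,\delta_j$ are free parameters. Taking a shell of thickness of order $\eps$ carrying a coefficient of order $\eps$ is what makes the shell's conductance and stored energy stay of order one, so that the $j$-th core behaves asymptotically like a point mass $b_j|\mathcal C_j|$ attached to the matrix by a spring of order-one stiffness --- a single resonator, with one resonance in $[0,L]$ and no spurious low modes.

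By the Floquet--Bloch theory recalled above, $\sigma(\mathbf A\e)=\overline{\cupl_{\theta}\sigma(\mathbf A\e_\theta)}$, where $\mathbf A\e_\theta$ acts on $L_{2,\mathbf b}(Y)$ with $\theta$-quasiperiodic conditions and has discrete spectrum $\lambda_1\e(\theta)\le\lambda_2\e(\theta)\le\dots$ depending continuously on $\theta$; hence $\sigma(\mathbf A\e)$ is the union of the bands $\Lambda_k\e=[\minl_\theta\lambda_k\e(\theta),\maxl_\theta\lambda_k\e(\theta)]$, and the gaps in $[0,L]$ are the complementary intervals. The heart of the proof is to show that, uniformly in $\theta$, $\mathbf A\e_\theta$ converges in a (generalized) norm-resolvent sense to an explicit limit operator $\mathbf A^0_\theta$ --- heuristically the Laplacian on the matrix with Neumann conditions on the $\partial\mathcal C_j$, coupled through the shell stiffnesses to $m$ scalar resonators of mass $b_j|\mathcal C_j|$ --- and then to identify the limiting spectrum $\sigma^0:=\overline{\cupl_\theta\sigma(\mathbf A^0_\theta)}$. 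A Weinstein--Aronszajn (``effective density'') computation gives $\sigma^0\cap[0,L]=[0,L]\setminus\cupl_{j=1}^m(\a_j^0,\b_j^0)$, where $\a_j^0$ is the $j$-th resonance --- an explicit function of $c_j,b_j$ and the geometry --- and $\b_j^0$ is the next zero of the effective density; here one uses that the matrix alone is gapless on $[0,L]$ for a suitable (small) choice of the cores when $n\ge2$, and that the internal Neumann eigenvalues of the cores can be pushed above $L$, so that exactly $m$ gaps survive in $[0,L]$.

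It then remains to tune the free parameters: the resonances $\a_j^0$ are set equal to $\a_j$ by a suitable choice of $b_j$ (or $c_j$), after which the gap widths $\b_j^0-\a_j^0$, which depend monotonically on the remaining constants, are set equal to $\b_j-\a_j$ by an intermediate-value argument --- all carried out so that the cores simultaneously remain small, pairwise disjoint, and with higher Neumann eigenvalues above $L$. With the parameters fixed in this way, the uniform norm-resolvent convergence yields $\lambda_k\e(\theta)\to\lambda_k^0(\theta)$ uniformly in $\theta$ and in the finitely many $k$ with $\Lambda_k^0\cap[0,L+1]\neq\varnothing$, hence Hausdorff convergence of the bands meeting $[0,L]$; since the $m$ limiting gaps $(\a_j,\b_j)$ are nondegenerate and mutually separated, for small $\eps$ the set $\sigma(\mathbf A\e)\cap[0,L]$ has exactly $m$ gaps $(\a_j\e,\b_j\e)$ with $\a_j\e\to\a_j$ and $\b_j\e\to\b_j$, which is \eqref{spec1}--\eqref{spec2}. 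The main obstacle is the middle step --- proving the uniform-in-$\theta$ norm-resolvent convergence to the correct limit operator (the delicate estimates are those near the thin, poorly conducting shells) and rigorously showing that $\sigma^0$ has precisely $m$ gaps located at the computed places; the parameter tuning afterwards is comparatively soft.
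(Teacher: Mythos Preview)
Your outline is coherent but follows a genuinely different route from the paper, and the step you call ``comparatively soft'' is in fact the one that hides the difficulty.

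The paper does \emph{not} fix a unit cell and let thin shells become transparent. It builds an $\eps$-periodic operator $\mathcal{A}^\eps$ (cells of side $\eps$, balls of radius $r\eps$, shells of thickness $\eps^\gamma$ with $\gamma>3$, shell coefficient $a_j\eps^{\gamma+1}$) and then rescales by $x=y\eps$; in the resulting $1$-periodic operator the coefficient on matrix and cores is $\eps^{-2}$, not $1$. This is the decisive point: the matrix eigenvalues on the unit cell escape to infinity, so the limit per Floquet fibre is a \emph{homogenized} constant-coefficient operator $-\sum\widehat a^{kl}\partial_k\partial_l$ on all of $\mathbb{R}^n$ (spectrum exactly $[0,\infty)$), coupled \emph{pointwise} to $m$ scalar fields $u_j$. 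For this limit the spectrum is $\{\lambda:\lambda\mathcal{F}(\lambda)\ge0\}$ with $\mathcal{F}(\lambda)=1+\sum_j\rho_j/(\sigma_j-\lambda)$, the lower gap edges are simply $\sigma_j=na_j/(rb_j)$, and --- crucially --- the inverse problem $(\sigma_j,\mu_j)=(\alpha_j,\beta_j)$ is solved by an \emph{explicit closed formula} for $a_j,b_j$ (Theorem~\ref{th2}), obtained by recognising $\sum_j\rho_j/(\beta_k-\alpha_j)=1$ as a linear system in the $\rho_j$ and inverting it by induction.

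In your scheme the matrix coefficient remains $1$, so the limit per fibre is the Neumann Laplacian on the \emph{perforated} cell $Y\setminus\bigcup_j\mathcal{C}_j$, coupled to the cores along spheres of fixed nonzero radius rather than pointwise. Two claims then need real work. First, that the periodic Neumann Laplacian on $\mathbb{R}^n\setminus\bigcup_{i,j}(\mathcal{C}_j+i)$ covers $[0,L]$ for small cores: plausible, but not automatic. Second, and more seriously, the parameter tuning: the upper edges $\beta_j^0$ depend on \emph{all} the coupling constants simultaneously (exactly as the $\mu_j$ depend on all of $\rho_1,\dots,\rho_m$ in the paper), so ``monotonicity plus intermediate value'' is not enough for $m>1$; you need either an explicit inversion, as the paper performs, or a genuine multi-parameter inverse-function or degree argument, together with a check that the solution keeps the cores small, disjoint, and with internal modes above $L$. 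Your Weinstein--Aronszajn heuristic also elides that the coupling is through boundary traces on nondegenerate spheres, which makes the effective-density reduction less clean than in the homogenized picture.

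So the paper trades your soft existence arguments for a limit with computable spectrum and explicit coefficients; your approach might be completable, but the hard step is precisely the parameter matching you dismiss, not only the resolvent convergence you flag.
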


\begin{remark}It follows from (\ref{intervals})-(\ref{spec2}) that the operator $\mathbf{A}\e$
has exactly $m$ gaps in $[0,L]$ when $\eps$ is small enough. In
general, the existence of gaps in the spectra of operators from
$\mathbf{L}_{\mathrm{per}}$ is not guaranteed, for instance in the
case of constant $\mathbf{a}(x)$, $\mathbf{b}(x)$ the spectrum
$\sigma(\mathbf{A})$ coincides with $[0,\infty)$. Various
operators from $\mathbf{L}_{\mathrm{per}}$ with gaps in their
spectrum were studied in the works
\cite{Figotin1,Frielander,Hempel,Zhikov,Figotin2,Figotin3,DavHar,Green,Post_JDE}
(see also the overview \cite{HempelPost}). In these works spectral
gaps are the result of high contrast either in the coefficient
$\mathbf{a}(x)$ \cite{Figotin1,Hempel,Frielander,Zhikov} or in the
coefficient $\mathbf{b}(x)$\cite{Figotin2,Figotin3} or in both
coefficients \cite{DavHar,Green,Post_JDE} (the last three works
deal with the Laplace-Beltrami operator in $\mathbb{R}^n$ with
conformally flat periodic metric; obviously, this operator belongs
to $\mathbf{L}_{\mathrm{per}}$).

The operator $\mathbf{A}\e$ constructed in the present work also
has high contrast in the coefficients (namely, $\liml_{\eps\to
0}\left(\ds{\max_{x\in \mathbb{R}^n}\mathbf{a}\e(x)\over\min_{x\in
\mathbb{R}^n} \mathbf{a}\e(x)}\right)=\infty$), but their form
essentially differs from the form of the coefficients in the works
mentioned above.

\end{remark}

The idea how to construct the functions $\mathbf{a}\e(x)$,
$\mathbf{b}(x)$ has come from the homogenization theory. We
briefly describe this construction.

\begin{figure}[h]
  \begin{center}
    \leavevmode
    \begin{picture}(0,0)
       \includegraphics{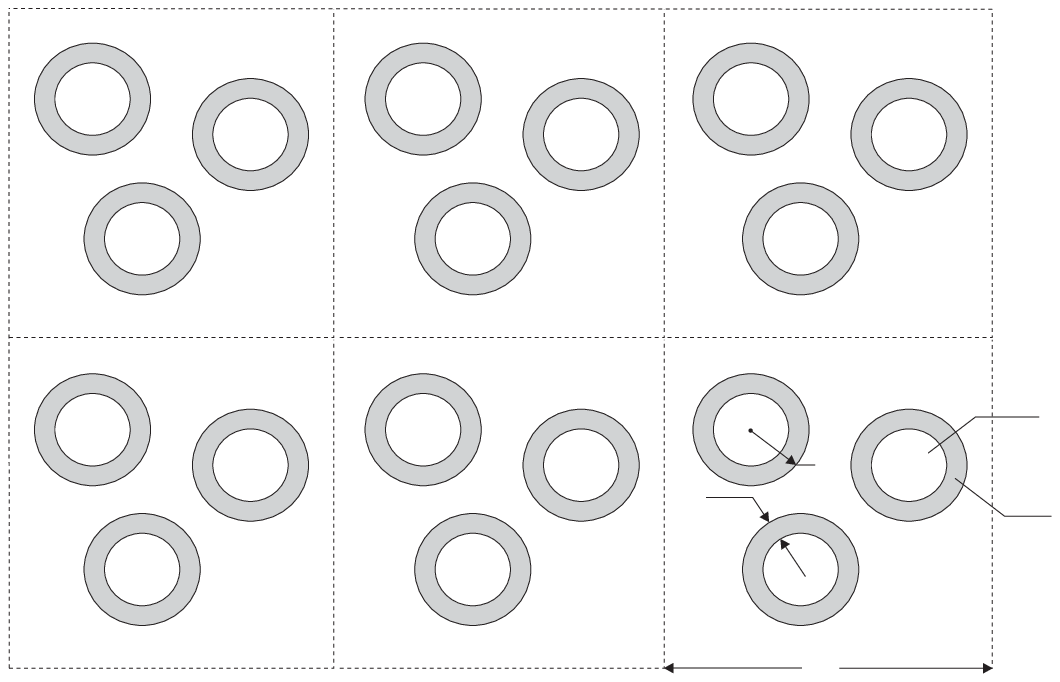}
    \end{picture}
    \setlength{\unitlength}{4144sp}%
    \begin{picture}(5000,3000)(0,0)
      \put(4670, 1100){$B_{ij}\e$}
      \put(4750, 650){$G_{ij}\e$}
      \put(3650, 900){$r\e$}
      \put(2990, 730){$d\e$}
      \put(3610, 0){$\eps$}

    \end{picture}

  \end{center}
  \caption{}\label{fig1}
\end{figure}

Let $\eps>0$ be a small number. Let $G\e=\cupl_{i\in
\mathbb{Z}^n}\cupl_{j=1}^m G_{ij}\e$ be a union of pairwise
disjoint spherical shells $G_{ij}\e$ lying in $\mathbb{R}^n$. It
is supposed that the following conditions hold (see also Fig. 1):
\begin{itemize}
\item for any fixed $j\in\left\{1,\dots,m\right\}$ the shells
$G_{ij}\e$ are centered at the nodes of $\eps$-periodic lattice in
$\mathbb{R}^n$,

\item the shells $G_{0j}\e$ ($j=1,\dots,m$) belong  to the cube
$\left\{x=(x_1,\dots,x_n)\in \mathbb{R}^n:\ 0< x_k< \eps,\ \forall
k\right\}$.

\end{itemize}
The external radius of the shells is equal to $r\e=r\eps$ ($r>0$),
the thickness of their walls is equal to $d\e=\eps^\gamma$
($\gamma>3$). By $B_{ij}\e$ we denote the sphere interior to
$G_{ij}\e$. We set $B\e=\cupl_{i\in \mathbb{Z}^n}\cupl_{j=1}^m
B_{ij}\e$.

We define the functions $a\e(x),\ b\e(x)$ by the formulae
\begin{gather}\label{ab0}
a\e(x)=\begin{cases}1,&x\in\ds \mathbb{R}^n\setminus
G\e,\\a_j\eps^{\gamma+1},&x\in G_{ij}\e,\end{cases}\quad
b\e(x)=\begin{cases}1,&x\in\ds \mathbb{R}^n\setminus \left(B\e\cup
G\e\right),\\b_j,&x\in B_{ij}\e\cup G_{ij}\e,\end{cases}
\end{gather} where $a_j$, $b_j$ ($j=1,\dots,m)$ are positive
constants, which will be chosen later on. We consider the operator
\begin{gather*}
\mathcal{A}\e=-(b\e)^{-1}\mathrm{div}\left(a\e\nabla\right)=-{1\over
b\e(x)}\suml_{k=1}^n\ds{\partial\over\partial
x_k}\left(a\e(x){\partial \over\partial x_k}\right)
\end{gather*}

It will be proved (see Theorem \ref{th1} below) that the spectrum
of $\mathcal{A}\e$ converges to the spectrum of some operator
$\mathcal{A}^0$ acting in the Hilbert space
$L_2(\mathbb{R}^n)\underset{j=\overline{1,m}}\oplus
L_{2,\rho_j/\sigma_j}(\mathbb{R}^n)$, where $\rho_j,\sigma_j$
($j=1,\dots,m$) are positive constants. The spectrum of
$\mathcal{A}^0$ coincides with the set
$[0,\infty)\setminus\left(\cupl_{j=1}^m(\sigma_j,\mu_j)\right)$,
where the intervals $(\sigma_j,\mu_j)$ satisfy
\begin{gather*}
0<\sigma_1,\quad \sigma_{j}<\mu_{j}<\sigma_{j+1},\
j=\overline{1,m-1},\quad \sigma_m<\mu_m<\infty
\end{gather*}
and depend in a special way on $a_j$ and $b_j$.

More precisely, we will prove that for an arbitrary $L>\mu_k$ the
spectrum of the operator $\mathcal{A}\e$ has the following
structure in the interval $[0,L]$ when $\eps$ is small enough:
$$\sigma(\mathcal{A}\e)\cap[0,L]=[0,L]\setminus\left(\cupl_{j=1}^m (\sigma_j\e,\mu_j\e)\right)$$
where the intervals $(\sigma_j\e,\mu_j\e)$ satisfy
\begin{gather*}
\forall j=1,\dots,m:\quad \liml_{\eps\to
0}\sigma_j\e=\sigma_j,\quad \liml_{\eps\to 0}\mu_j\e=\mu_j
\end{gather*}

Furthermore, we will prove (see Theorem \ref{th2} below) that for
arbitrary intervals $(\a_j,\b_j)$ ($j={1,\dots,m},\ m\in
\mathbb{N}$) satisfying (\ref{intervals}) one can choose such
$a_j$, $b_j$ in (\ref{ab0}) that the following equalities hold:
\begin{gather}\label{ab}
\forall j=1,\dots,m:\quad \sigma_j=\alpha_j,\ \mu_j=\beta_j
\end{gather}

Finally we set (below $y\in \mathbb{R}^n$)
$$\mathbf{a}\e(y)=\eps^{-2} a\e(x),\ \mathbf{b}(y)=b\e(x),\text{ where }x=y\eps$$
(obviously, $\mathbf{b}(y)$ is independent of $\eps$). It is clear
that $\mathbf{a}\e,\mathbf{b}$ belong to
$\mathbf{H}_{\mathrm{per}}$ and are step-functions having at most
$m+1$ values. It is easy to see that the spectra of the operator
$$\mathbf{A}\e={\mathbf{b}}^{-1}\mathrm{div}({\mathbf{a}}\e\nabla)$$
and the operator $\mathcal{A}\e$ coincide (in fact, $\mathbf{A}\e$
is obtained from $\mathcal{A}\e$ via change of variables
$x={y\eps}$).

It follows from Theorem \ref{th1}-\ref{th2} that
$\sigma(\mathbf{A}\e)$ satisfies (\ref{spec1})-(\ref{spec2}).

We remark that the gaps open up in the spectrum of $\mathbf{A}\e$
because of the high contrast in the coefficient $\mathbf{a}\e(x)$.
The coefficient $\mathbf{b}(x)$ is independent of $\eps$ and it is
needed only in order to control the behavior of the gaps as
$\eps\to 0$. In fact, the operator
$-\mathrm{div}(\mathbf{a}\e\nabla)$ also has at least $m$ gaps
when $\eps$ is small enough, but in general they do not converge
to $(\a_j,\beta_j)$ as $\eps\to 0$.

\begin{arguments}
The classical problem of the homogenization theory (see e.g.
\cite{CPS,CioDon,CioSJP,March,Sanch,Tartar,ZKO}) is to describe
the asymptotic behaviour as $\eps\to 0$ of the operator
$\mathcal{A}\e$ which acts in $L_2(\Omega)$ ($\Omega\subset
\mathbb{R}^n$ is a bounded domain) and is defined by the operation
$$\mathcal{A}_\Omega\e=-\mathrm{div}\left(a\e\nabla\right)$$
and either Dirichlet or Neumann boundary conditions on
$\partial\Omega$. Here
\begin{gather}\label{classic}
a\e(x)=\mathbf{a}(x\eps^{-1})\text{, where }\mathbf{a}\in
\mathbf{H}_{\mathrm{per}}
\end{gather}
It is well-known that $\mathcal{A}\e$ strongly resolvent converges
to the operator (so-called "homogenized operator")
$$\mathcal{A}_\Omega^0=-\suml_{k,l=1}^n \widehat{a}^{kl}\ds{\partial^2\over\partial
x_k\partial x_l}$$ where the constants $\widehat{a}^{kl}$ satisfy:
$\exists C^-,C^+>0\text{ s.t. }\forall\xi\in \mathbb{R}^n\
C^-|\xi|^2\leq \widehat{a}^{kl}\xi_k\xi_l\leq C^+|\xi|^2$.

It is interesting to study the asymptotic behaviour of the
operator $\mathcal{A}\e$ when $a\e$ has more complicated form
comparing with (\ref{classic}). In particular interest is the case
when $a\e$ is bounded below but not uniformly in $\eps$. This is
just our situation (see (\ref{ab0})): for fixed $\eps$ one has
$\minl_{x\in \mathbb{R}^n} a\e(x) >0$, but $\liml_{\eps\to
0}\left(\minl_{x\in \mathbb{R}^n} a\e(x) \right)=0$. Such type
problems were widely studied in \cite[Chapter 7]{March}. In
particular, the authors considered the operator
$\mathcal{A}_\Omega^{D,\eps}$ which acts in $L_2(\Omega)$ and is
defined by the operation
$\mathcal{A}_\Omega^{D,\eps}=-\mathrm{div}(a\e\nabla)$ and the
Dirichlet boundary conditions on $\partial\Omega$. Here
$\Omega\subset \mathbb{R}^n$ is a bounded domain, $a\e$ is defined
by (\ref{ab0}) (only the case $m=1$ was considered). It was proved
that $\mathcal{A}_\Omega^{D,\eps}$ converges as $\eps\to 0$ (in
some sense which is close to strong resolvent convergence) to the
operator $\mathcal{A}_\Omega^{D,0}$ acting in the space
$L_2(\Omega)\oplus L_{2,\rho/\sigma}(\Omega)$ and being defined by
the operation
\begin{gather}\label{operation}
 \mathcal{A}_\Omega^{D,0}= \left(\begin{matrix} \ds
 - \widehat{a}\Delta +\rho\ &\ -\rho\\
-\sigma\ &\ \sigma
\end{matrix}\right)
\end{gather}
and the definitional domain
$\mathcal{D}(\mathcal{A}_\Omega^{D,0})=\left\{(u,v)\in
H^2(\Omega)\oplus L_{2,\rho/\sigma}(\Omega):\
u|_{\partial\Omega}=0\right\}$. Here $\widehat{a},\rho,\sigma$ are
positive constants that do not depend on $\Omega$. A similar
result is valid for the operator $\mathcal{A}_\Omega^{N,\eps}$
(the superscripts $"D"$ and $"N"$ mean Dirichlet and Neumann
boundary conditions): the corresponding homogenized operator
$\mathcal{A}_\Omega^{N,0}$ is defined by operation
(\ref{operation}) and the definitional domain
$\mathcal{D}(\mathcal{A}_\Omega^{N,0})=\left\{(u,v)\in
H^2(\Omega)\oplus L_{2,\rho/\sigma}(\Omega):\ \left.{\partial
u\over\partial n}\right|_{\partial\Omega}=0\right\}$.

Although in general the strong resolvent convergence of operators
does not imply the Hausdorff convergence of their spectra (see the
definition at the beginning of Section \ref{sec5}), but suppose
for a moment that this is true for the operators
$\mathcal{A}_{\Omega}^{D,\eps}$ and
$\mathcal{A}_{\Omega}^{N,\eps}$, i.e.\footnote{We will prove this
statement in Section \ref{sec5} (the only difference is that we
will consider quasi-periodic boundary conditions, but for
Dirichlet and Neumann boundary conditions the proof is similar.)}
\begin{gather*}
\sigma(\mathcal{A}_{\Omega}^{D,\eps})\underset{\eps\to 0}\to
\sigma(\mathcal{A}_{\Omega}^{D,0}),\
\sigma(\mathcal{A}_{\Omega}^{N,\eps})\underset{\eps\to 0}\to
\sigma(\mathcal{A}_{\Omega}^{N,0})\text{ in the Hausdorff sense}
\end{gather*}
We denote $\Omega_R=\left\{x\in \mathbb{R}^n:\ |x|<R\right\}$. One
can prove (for example, it follows from \cite[Proposition
2.3]{Khrab3}) that
\begin{gather*}
\forall\Omega\subset \mathbb{R}^n:\
(\sigma,\mu)\cap\sigma(\mathcal{A}_\Omega^{D/N,0})=\varnothing\\
\forall [d^-,d^+]\subset[0,\infty)\setminus(\sigma,\mu)\quad
\exists R_d>0:\
\sigma(\mathcal{A}_{\Omega_R}^{D/N,0})\cap[d^-,d^+]\not=
\varnothing\text{ for }R>R_{d}
\end{gather*}
where $D/N$ is either $D$ or $N$, $\mu=\sigma+\rho$. These suggest
that when $\eps$ is small enough the operator $\mathcal{A}^{\eps}$
has a gap in the spectrum and this gap tends to the interval
$(\sigma,\mu)$ as $\eps\to 0$.

The close problem was also considered in \cite{Pankratov} where
the authors studied the asymptotic behaviour of the attractors for
semilinear hyperbolic equation ${\partial^2_{tt}
u}+\mathcal{A}_{\Omega}^{D,\eps}u+f\e(u)=h\e$.

We remark that the proof of the resolvent convergence in
\cite{March} is based on the method of so-called "local energy
characteristics". This method is well adapted for both periodic
and non-periodic operators but it is quite cumbersome. Therefore
in the present work following \cite{Khrab6} we carry out the proof
in more simple fashion via the substitution of a suitable test
function into the variational formulation of the spectral problem.

\end{arguments}

In the next section we describe precisely the operator
$\mathcal{A}\e$ and formulate Theorems \ref{th1}-\ref{th2}. Their
proofs are carried out in Sections \ref{sec2}-\ref{sec7}.


\section{\label{sec1}Construction of operators $\mathcal{A}\e$ and main results}

Let $n\in \mathbb{N}\setminus\{1\}$, $m\in \mathbb{N}$. Let the
points $x_j\in \mathbb{R}^n$ ($j=1,\dots, m$) and the number $r>0$
be such that the closed balls ${B}_j=\left\{x\in \mathbb{R}^n:\
|x-x_j|\leq r\right\}$ are pairwise disjoint and belong to the
open cube $${{{Y}}}=\left\{x=(x_1,\dots,x_n)\in \mathbb{R}^n:\ 0<
x_k< 1,\ \forall k\right\}$$

Let $\eps>0$. We introduce the following notations (below $i\in
\mathbb{Z}^n$, $j=1,\dots,m$):
\begin{gather*}
x_{ij}\e=\eps(x_j+i)\\
{G}_{ij}\e=\left\{x\in \mathbb{R}^n:\ r\e-d\e<|x-x_{ij}\e|<
r^\eps\right\},\quad {B}_{ij}\e=\left\{x\in \mathbb{R}^n:\
|x-x_{ij}\e|< r\e-d\e\right\}
\end{gather*}
where
\begin{gather*}
 r\e=r\eps,\quad d\e=\eps^\gamma,\ \gamma>3
\end{gather*}
We also denote
$$G\e=\cupl_{i\in\mathbb{Z}^n}\cupl_{j=1}^m G_{ij}\e,\quad
B\e=\cupl_{i\in\mathbb{Z}^n}\cupl_{j=1}^m B_{ij}\e,\quad
F\e=\mathbb{R}^n\setminus\left(\overline{G\e\cup B\e}\right)$$

We define the piecewise constant functions $a\e(x),\ b\e(x)$ by
the formulae
\begin{gather}\label{a}
a\e(x)=\begin{cases}1,&x\in F\e\cup B\e,\\a_j\e\equiv
a_j\eps^{\gamma+1},&x\in G_{ij}\e\quad (i\in \mathbb{Z}^n,\
j=1,\dots,m),\\\end{cases}\\\label{b} b\e(x)=\begin{cases}1,&x\in
F\e,\\b_j,&x\in B_{ij}\e\cup G_{ij}\e\quad (i\in \mathbb{Z}^n,\
j=1,\dots,m),\end{cases}
\end{gather} where $a_j$, $b_j$ ($j=1,\dots,m)$ are positive
constants.

Now we define precisely the operator $\mathcal{A}\e$. By
$L_{2,{b\e}}(\mathbb{R}^n)$ we denote the Hilbert space of
functions from $L_2(\mathbb{R}^n)$ with the following scalar
product:
\begin{gather*}
(u,v)_{L_{2,{b\e}}(\mathbb{R}^n)}=\intl_{\mathbb{R}^n}u(x)\overline{v(x)}b\e(x)dx,\quad
\end{gather*}
Remark that
\begin{gather}\label{equiv}
C^{-}\|\cdot\|_{L_{2}(\mathbb{R}^n)}\leq
\|\cdot\|_{L_{2,{b\e}}(\mathbb{R}^n)}\leq C^+
\|\cdot\|_{L_{2}(\mathbb{R}^n)}
\end{gather}
where the positive constants $C^{-},C^+$ are independent of
$\eps$. By $\eta_{\mathbb{R}^n}\e[u,v]$ we denote the sesquilinear
form in ${L_{2,{b\e}}(\mathbb{R}^n)}$ which is defined by the
formula
\begin{gather*}
\eta_{\mathbb{R}^n}\e[u,v]=\intl_{\mathbb{R}^n}a\e(x)\left(\nabla
u,\nabla \overline{v}\right)dx
\end{gather*}
with $\mathrm{dom}(\eta_{\mathbb{R}^n}\e)=H^1(\mathbb{R}^n)$. Here
$\left(\nabla u,\nabla
\overline{v}\right)=\ds\suml_{k=1}^n{\partial u\over\partial
x_k}{\partial \overline{v}\over\partial x_k}$. The form is densely
defined, closed and positive. Then (see e.g. \cite{Kato}) there
exists the unique self-adjoint and positive operator
$\mathcal{A}\e$ associated with the form
$\eta\e_{\mathbb{R}^n}[u,v]$, i.e.
\begin{gather}
\label{op_fo} (\mathcal A\e u,v)_{L_{2,{b\e}}(\mathbb{R}^n)}=
\eta_{\mathbb{R}^n}\e[u,v],\quad\forall u\in
\mathrm{dom}(\mathcal{A}\e),\ \forall v\in
\mathrm{dom}(\eta\e_{\mathbb{R}^n})
\end{gather}
Its domain $\mathrm{dom}(\mathcal{A}\e)$ consists of functions $u$
belonging to the spaces $H^2(F\e)$, $H^2(G_{ij}\e)$,
$H^2(B_{ij}\e)$ (for any $i\in\mathbb{Z}^n$, $j=1,\dots,m$) and
satisfying the following conditions on the boundaries of the
shells $G_{ij}\e$:
\begin{gather}\label{sopr}
\begin{cases} (u)^+=(u)^-\text{\quad and\quad } \ds\left({\partial
u\over\partial n}\right)^+=a_j\e\left({\partial u\over\partial
n}\right)^-,
&x\in\partial \left(\overline{B_{ij}\e}\cup G_{ij}\e\right),\\
(u)^+=(u)^-\text{\quad and\quad } a_j\e\ds\left({\partial
u\over\partial n}\right)^+=\left({\partial u\over\partial
n}\right)^-, &x\in\partial B_{ij}\e
\end{cases}
\end{gather}
where by $+$ (\textit{resp.} $-$) we denote the values of the
function $u$ and its normal derivative on the exterior
(\textit{resp.} interior) side of either
$\partial\left(\overline{B_{ij}\e}\cup G_{ij}\e\right)$ or
$\partial B_{ij}\e$. For sufficiently smooth $u$ the operator
$\mathcal{A}\e$ is defined locally by the formula
\begin{gather}\label{local}
\mathcal{A}\e u=-{1\over
b\e(x)}\suml_{k=1}^n\ds{\partial\over\partial
x_k}\left(a\e(x){\partial u\over\partial x_k}\right)
\end{gather}

By $\sigma(\mathcal{A}\e)$ we denote the spectrum of the operator
$\mathcal{A}\e$. In order to describe the behaviour of
$\sigma(\mathcal{A}\e)$ as $\eps\to 0$ we introduce some
additional notations.

In the domain $F={{{Y}}}\setminus \cupl_{j=1}^m {B}_j$ we consider
the following problem (below $k=1,\dots,n$):
\begin{gather}\label{cell}
\left\{\begin{array}{l} \Delta v_k=0,\ x\in F\\\ds{\partial
v_k\over\partial n}=n_k,\ x\in
\partial\left(\cupl_{j=1}^m{B}_j\right)\\v_k,D v_k\text{ are
}Y\text{-periodic, i.e. } \forall\a=\overline{1,n}:\
\begin{cases}
v_k(x)=v_k(x+e_\a)\\\ds {\partial v_k\over\partial
x_\a}(x)={\partial v_k\over\partial x_\a}(x+e_\a)
\end{cases}\text{for }x=\underset{^{\overset{\qquad\quad\uparrow}{\qquad\quad
\a\text{-th place}}\qquad }}{(x_1,x_2,\dots,0,\dots,x_n)}
\end{array}\right.
\end{gather}
where $n=(n_1,\dots,n_n)$ is the outward unit normal to
$\cupl_{j=1}^m{B}_j$,
$e_\a=\underset{^{\overset{\qquad\quad\uparrow}{\qquad\quad
\a\text{-th place}}\qquad }}{(0,0,\dots,1,\dots,0)}$. It is known
(see e.g. \cite{CioSJP}) that the unique (up to a constant)
solution $v_k(x)$ of this problem exists. We denote
$$\widehat{a}^{kl}={1\over |F|}\intl_{F}\left(\nabla (x_k-v_k),\nabla(x_l-v_l)\right)dx,\ k,l=1,\dots,n$$
The matrix $\widehat{A}=\left\{\widehat{a}^{kl}\right\}$ is
symmetric and positively defined (see e.g. \cite[Chapter 1,
Proposition 2.6]{CioSJP}).

\begin{remark} In the case when $m=1$ and the center of ball $B_1$ coincides
with the center of the cube ${Y}$ the matrix
$\widehat{A}=\left\{\widehat{a}^{kl}\right\}$ has more simple
form, namely $\widehat{A}=\widehat{a}\mathrm{I}$ where
$\mathrm{I}$ is the identity matrix, $\widehat{a}>0$. This follows
easily from the symmetry of the domain $F$.
\end{remark}

We denote
\begin{gather}\label{sigmarho}
\sigma_j={na_j\over r b_j},\quad \rho_j={a_j |\partial B_j|\over
|F|}
\end{gather}
We assume that the numbers $a_j$ and $b_j$ in (\ref{a})-(\ref{b})
are such that $\sigma_i\not=\sigma_j$ if $i\not=j$. For
definiteness we suppose that $\sigma_j<\sigma_{j+1}$,
$j=1,\dots,n-1$.

And finally let us consider the following equation (with unknown
$\lambda\in\mathbb{C}$):
\begin{gather}\label{mu_eq}
\mathcal{F}(\lambda)\equiv
1+\suml_{j=1}^m{\rho_j\over\sigma_j-\lambda}=0
\end{gather}
It is easy to prove (see Section \ref{sec4}) that this equation
has exactly $m$ roots $\mu_j$ ($j=1,\dots,m$), they are real,
moreover they interlace with $\sigma_j$, i.e.
$$
\sigma_j<\mu_j<\sigma_{j+1},\ j=\overline{1,m-1},\quad
\sigma_m<\mu_m<\infty
$$

Now we are able to formulate the theorem describing the behaviour
of $\sigma(\mathcal{A}\e)$ as $\eps\to 0$.


\begin{theorem}\label{th1}
Let $L$ be an arbitrary number such that $L>\mu_m$. Then the
spectrum $\sigma(\mathcal{A}\e)$ of the operator $\mathcal{A}\e$
has the following structure in $[0,L]$ when $\eps$ is small
enough:
\begin{gather}\label{th1_f1}
\sigma(\mathcal{A}\e)\cap[0,L]=[0,L]\setminus
\left(\cupl_{j=1}^{m}(\sigma_j\e,\mu_j\e)\right)
\end{gather}
where the intervals $(\sigma_j\e,\mu_j\e)$ satisfy
\begin{gather}\label{th1_f2}
\forall j=1,\dots,m:\quad \lim_{\eps\to
0}\sigma_j\e=\sigma_j,\quad \lim_{\eps\to 0}\mu_j\e=\mu_j
\end{gather}

The set
$[0,\infty)\setminus\left(\cupl_{j=1}^m(\sigma_j,\mu_j)\right)$
coincides with the spectrum $\sigma(\mathcal{A}^0)$ of the
self-adjoint operator $\mathcal{A}^0$ which acts in the space
$L_2(\mathbb{R}^n)\underset{j=\overline{1,m}}\oplus
L_{2,{\rho_j/\sigma_j}}(\mathbb{R}^n)$ and is defined by the
formula
\begin{gather*}
\mathcal{A}^0 U= \left(\begin{matrix} -\ds\suml_{k,l=1}^n
\widehat{a}^{kl} \ds{\partial^2 u\over\partial x_k\partial x_l}
+\ds
\suml_{j=1}^m{\rho_j}(u-u_j)\\
\sigma_1(u_1-u)\\\sigma_2(u_2-u)\\\dots\\\sigma_m(u_m-u)
\end{matrix}\right),\ U=\left(\begin{matrix}u\\u_1\\u_2\\\dots\\u_m\end{matrix}\right)\in
\mathrm{dom}(\mathcal{A}^0)={H^2(\mathbb{R}^n)}\underset{j=\overline{1,m}}\oplus
L_{2,\rho_j/\sigma_j}(\mathbb{R}^n)
\end{gather*}
\end{theorem}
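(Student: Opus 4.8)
The plan is to prove Theorem~\ref{th1} by combining a Floquet–Bloch reduction with a homogenization analysis on the periodicity cell, followed by a Hausdorff-convergence argument linking the spectrum of $\mathcal{A}\e$ to that of the limit operator $\mathcal{A}^0$. First I would apply the Gelfand transform to write $\sigma(\mathcal{A}\e)=\bigcup_{\theta\in[0,2\pi)^n}\sigma(\mathcal{A}\e_\theta)$, where $\mathcal{A}\e_\theta$ is the operator obtained by restricting the sesquilinear form $\eta\e_{\mathbb R^n}$ to $\theta$-quasiperiodic functions on the cell $\eps Y$ (after rescaling, on $Y$ itself); each $\mathcal{A}\e_\theta$ has compact resolvent and hence discrete spectrum $\lambda_1\e(\theta)\le\lambda_2\e(\theta)\le\cdots$. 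The bands of $\mathcal{A}\e$ are the ranges of the functions $\theta\mapsto\lambda_k\e(\theta)$, so the gap structure in $[0,L]$ is controlled by the asymptotics of finitely many of these band functions. In parallel, I would decompose $\mathcal{A}^0$ by the ordinary Fourier transform, since its coefficients are constant: $\mathcal{A}^0$ is unitarily equivalent to multiplication by the matrix-valued symbol whose eigenvalues, as functions of $\xi\in\mathbb R^n$, are the roots of $\bigl(\sum_{k,l}\widehat a^{kl}\xi_k\xi_l-\lambda\bigr)\bigl(1+\sum_j\tfrac{\rho_j}{\sigma_j-\lambda}\bigr)=\lambda\cdot(\text{something})$; more precisely one checks directly that $\sigma(\mathcal{A}^0)=\{\lambda\ge 0:\ \exists\,\xi,\ |\xi|^2_{\widehat A}=\lambda\,\mathcal F(\lambda)/(\mathcal F(\lambda)-\text{correction})\}$, and using the monotonicity and pole structure of $\mathcal F$ from \eqref{mu_eq} one verifies that this set equals $[0,\infty)\setminus\bigcup_j(\sigma_j,\mu_j)$. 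This last verification is essentially the content of Section~\ref{sec4} and is elementary once the spectral equation for $\mathcal{A}^0$ is written down.

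The heart of the proof is the convergence $\sigma(\mathcal{A}\e_\theta)\to\sigma(\mathcal{A}^0_\theta)$ uniformly in $\theta$ in the Hausdorff sense on compact intervals $[0,L]$, where $\mathcal{A}^0_\theta$ is the $\theta$-fiber of $\mathcal{A}^0$. To get the two inclusions I would argue variationally at the level of the Rayleigh quotients, following the test-function strategy announced in the Heuristic arguments. For the ``upper'' inclusion (every point of $\sigma(\mathcal{A}^0_\theta)$ is approximated by points of $\sigma(\mathcal{A}\e_\theta)$): given an eigenfunction $U=(u,u_1,\dots,u_m)$ of $\mathcal{A}^0_\theta$, I would build an approximate eigenfunction $u\e$ on the cell by gluing together (i) the ``macroscopic'' part $u$ plus an $\eps$-corrector built from the cell solutions $v_k$ of \eqref{cell} on the region $F\e$, (ii) constants $u_j$ on each interior ball $B_{ij}\e$, and (iii) a thin interpolation profile across each shell $G_{ij}\e$ that matches the values on the two sides; the scaling $a_j\e=a_j\eps^{\gamma+1}$, $d\e=\eps^\gamma$ is exactly what makes the Dirichlet energy $\int_{G_{ij}\e}a\e|\nabla u\e|^2$ contribute the finite limit $\rho_j|u(x_{ij})-u_j|^2$ per cell (the factor $\eps^{\gamma+1}$ cancels the $\eps^{-\gamma}$ blow-up from the radial derivative across a wall of thickness $\eps^\gamma$, and a further $\eps^n$ from the number of cells balances the $\eps^{-n}$ from normalization), while the coupling constants $\sigma_j=na_j/(rb_j)$ emerge from the surface measure $|\partial B_{ij}\e|\sim(r\eps)^{n-1}|S^{n-1}|$ against the volume $|B_{ij}\e|\sim(r\eps)^n|S^{n-1}|/n$ weighted by $b_j$. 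Plugging $u\e$ into the quotient $\eta\e_\theta[u\e,u\e]/\|u\e\|^2_{L_{2,b\e}}$ and estimating the error terms (the corrector remainder, the boundary-layer mismatch, and cross terms) by the standard homogenization bounds gives $\mathrm{dist}(\lambda,\sigma(\mathcal{A}\e_\theta))\to 0$ for each eigenvalue $\lambda$ of $\mathcal{A}^0_\theta$.

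For the reverse (``lower'') inclusion — every limit point of $\sigma(\mathcal{A}\e_\theta)$ lies in $\sigma(\mathcal{A}^0_\theta)$ — I would take a sequence $\lambda\e\in\sigma(\mathcal{A}\e_\theta)$ with normalized eigenfunctions $u\e$ and extract a limit. The a priori energy bound $\eta\e_\theta[u\e,u\e]=\lambda\e\|u\e\|^2$ gives weak $H^1$ compactness of the restrictions to $F\e$ (hence a macroscopic limit $u$) and, crucially, control of $\int_{G\e}a\e|\nabla u\e|^2$, which forces $u\e$ to be nearly constant on each $B_{ij}\e$ with a well-defined limiting value $u_j$; passing to the limit in the weak formulation, using the div-curl / oscillating-test-function lemma to identify the homogenized flux $\sum\widehat a^{kl}\partial^2_{kl}u$, and tracking the shell contributions as above, one recovers exactly the equations $\mathcal{A}^0_\theta U=(\lim\lambda\e)U$. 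The main obstacle, and where I would spend the most care, is this lower bound together with the uniformity in $\theta$: one must rule out energy concentrating in the thin shells in a way that escapes the limit (a ``loss of compactness'' in the transverse variable), which requires a quantitative trace/Poincaré estimate on the annuli $G_{ij}\e$ with constants tracked through the $\eps$-scaling, and one must ensure all error estimates are uniform in the quasimomentum $\theta$ so that the fiberwise Hausdorff convergence upgrades to convergence of $\sigma(\mathcal{A}\e)=\bigcup_\theta\sigma(\mathcal{A}\e_\theta)$. Once the uniform fiberwise convergence is in hand, \eqref{th1_f1}--\eqref{th1_f2} follow: the band functions $\lambda_k\e(\theta)$ converge uniformly to those of $\mathcal{A}^0$, the limiting bands cover precisely $[0,\infty)\setminus\bigcup_j(\sigma_j,\mu_j)$ by the symbol computation, and since this limiting set has genuine gaps with nonempty interior, for small $\eps$ the perturbed bands still avoid slightly shrunk intervals $(\sigma_j\e,\mu_j\e)$ while covering the rest of $[0,L]$, giving the claimed structure with $\sigma_j\e\to\sigma_j$, $\mu_j\e\to\mu_j$.
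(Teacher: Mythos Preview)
Your overall strategy is the paper's strategy, but there is a structural confusion about \emph{which} period cell you are Floquet-decomposing on, and this would derail the argument as written. You take the fibers $\mathcal{A}\e_\theta$ on the minimal cell $\eps Y$ and claim convergence $\sigma(\mathcal{A}\e_\theta)\to\sigma(\mathcal{A}^0_\theta)$ to some ``$\theta$-fiber of $\mathcal{A}^0$''. But there is no sensible limit fiber on the $\eps$-cell: for any fixed $\theta\neq(1,\dots,1)$ every eigenvalue of the $\theta$-fiber of the constant-coefficient operator $\mathcal{A}^0$ on a cell of side $\eps$ behaves like $\eps^{-2}$, so the asserted fiberwise Hausdorff convergence cannot hold. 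What actually happens on the $\eps$-cell is that only the first $m+1$ eigenvalues of $\mathcal{A}^{\theta,\eps}_{Y_0^\eps}$ remain bounded, the $(m+1)$-st band stretches to infinity, and the higher bands leave $[0,L]$ altogether; there is no pointwise-in-$\theta$ limit object to converge to, and your homogenization test-function construction (correctors $v_k$, many cells, etc.) makes no sense on a cell containing a single shell per $j$.

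The paper handles this by using \emph{two different} Floquet decompositions. For the Hausdorff convergence $\sigma(\mathcal{A}\e)\to\sigma(\mathcal{A}^0)$ it restricts to $\eps=1/N$ and decomposes over the \emph{coarse} cell $Y$ (the unit cube, now containing $N^n$ shells per $j$); on $Y$ the fiber operator $\mathcal{A}^{\theta,\eps}_Y$ genuinely homogenizes, and the test-function and compactness arguments you sketch are carried out there (Lemmas~\ref{th51}--\ref{th52}). But Hausdorff convergence alone does \emph{not} yield \eqref{th1_f1}--\eqref{th1_f2}: as the footnote opening Section~\ref{sec6} points out, a set Hausdorff-converging to $\sigma(\mathcal{A}^0)$ could still have arbitrarily many tiny gaps in $[0,L]$. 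The paper therefore returns to the $\eps$-cell and proves separately (Lemma~\ref{lm4}, via the rescaled Neumann problem) that the top of the $(m+1)$-st band satisfies $a_{m+1}^+(\eps)\to\infty$, hence at most $m$ gaps in $[0,L]$; this, together with the Hausdorff convergence and the elementary Proposition~\ref{prop1}, pins down the gap endpoints. Your final paragraph tries to get this from ``uniform convergence of band functions $\lambda_k^\eps(\theta)$ to those of $\mathcal{A}^0$'', which, for the reasons above, is not available; you still owe a separate gap-counting argument.
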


To complete the proof of Theorem \ref{th0} we have to choose such
$a_j$ and $b_j$ in (\ref{a}), (\ref{b}) that (\ref{ab}) holds.


\begin{theorem}\label{th2}
Let $(\a_j,\b_j)$ ($j={1,\dots,m}$) be arbitrary intervals
satisfying (\ref{intervals}).

Then (\ref{ab}) holds if we choose
\begin{gather}\label{exact_formula}
a_j=\ds {|F|\over|\partial
{B}_j|}{({\b_j-\a_j})\prod\limits_{i=\overline{1,m}|i\not=
j}\ds\left({\b_i-\a_j\over \a_i-\a_j}\right)},\quad b_j=\ds
{n|F|\over r|\partial
{B}_j|}{{\b_j-\a_j\over\a_j}\prod\limits_{i=\overline{1,m}|i\not=
j}\ds\left({\b_i-\a_j\over \a_i-\a_j}\right)}
\end{gather}
\end{theorem}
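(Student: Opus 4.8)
The plan is to treat \eqref{ab} as a system of $2m$ equations — namely $\sigma_j=\a_j$ and $\mu_j=\b_j$ for $j=1,\dots,m$ — in the $2m$ unknowns $a_1,\dots,a_m,b_1,\dots,b_m$, and to solve it explicitly. The first observation is that the role of the two families of parameters decouples nicely. By the definition \eqref{sigmarho}, $\rho_j=a_j|\partial B_j|/|F|$ depends only on $a_j$ (the geometry $|\partial B_j|$ and $|F|$ being fixed), while $\sigma_j=na_j/(rb_j)$ involves both $a_j$ and $b_j$. So I would first determine the $\rho_j$'s from the requirement that the roots of $\mathcal{F}$ be prescribed, then read off the $a_j$'s from $\rho_j$, and finally read off the $b_j$'s from $\sigma_j=\a_j$ together with the now-known $a_j$.

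The heart of the matter is therefore the following interpolation-type fact: given the prescribed poles $\sigma_j:=\a_j$ (which are distinct and positive by \eqref{intervals}) and the prescribed zeros $\mu_j:=\b_j$, there is a \emph{unique} choice of residues $\rho_j>0$ such that the rational function
\begin{gather*}
\mathcal{F}(\lambda)=1+\suml_{j=1}^m \frac{\rho_j}{\a_j-\lambda}
\end{gather*}
vanishes exactly at $\lambda=\b_1,\dots,\b_m$. To see this, clear denominators: $\mathcal{F}(\lambda)=0$ is equivalent to $\prod_{j=1}^m(\a_j-\lambda)+\suml_{j=1}^m\rho_j\prod_{i\ne j}(\a_i-\lambda)=0$. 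The left-hand side is a monic (up to sign $(-1)^m$) polynomial of degree $m$ in $\lambda$; forcing its roots to be $\b_1,\dots,\b_m$ forces it to equal $\prod_{j=1}^m(\b_j-\lambda)$ up to the same leading factor, i.e.
\begin{gather*}
\prod_{j=1}^m(\a_j-\lambda)+\suml_{j=1}^m\rho_j\prod_{i\ne j}(\a_i-\lambda)=\prod_{j=1}^m(\b_j-\lambda).
\end{gather*}
Now evaluate both sides at $\lambda=\a_j$: every term on the left except the $j$-th drops out, leaving $\rho_j\prod_{i\ne j}(\a_i-\a_j)=\prod_{i=1}^m(\b_i-\a_j)$, so
\begin{gather*}
\rho_j=\frac{\prod_{i=1}^m(\b_i-\a_j)}{\prod_{i\ne j}(\a_i-\a_j)}=(\b_j-\a_j)\prod_{i\ne j}\frac{\b_i-\a_j}{\a_i-\a_j}.
\end{gather*}
This determines $\rho_j$ uniquely. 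I would then check that $\rho_j>0$: the interlacing hypothesis \eqref{intervals} (with $\a_1<\b_1<\a_2<\dots<\a_m<\b_m$) guarantees that $\b_i-\a_j$ and $\a_i-\a_j$ have the same sign for each $i\ne j$ (both negative when $i<j$, both positive when $i>j$), while the extra factor $\b_j-\a_j>0$; hence the product is positive. Conversely, since the displayed polynomial identity forces the $\rho_j$, any admissible choice must be this one — so the zeros of $\mathcal{F}$ are then automatically $\b_1,\dots,\b_m$, i.e. $\mu_j=\b_j$.

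Having the $\rho_j$'s, I recover $a_j=\rho_j|F|/|\partial B_j|$, which is exactly the first formula in \eqref{exact_formula}; and from $\sigma_j=na_j/(rb_j)=\a_j$ I get $b_j=na_j/(r\a_j)$, which upon substituting the value of $a_j$ gives the second formula in \eqref{exact_formula}. Positivity of $a_j$ and $b_j$ follows from $\rho_j>0$ and $\a_j>0$. Finally I should note that this choice is consistent with the standing assumption $\sigma_i\ne\sigma_j$ for $i\ne j$ made before Theorem \ref{th1}, since $\sigma_j=\a_j$ and the $\a_j$ are distinct. I do not expect any serious obstacle here: the only point requiring a little care is the sign bookkeeping in verifying $\rho_j>0$ from the interlacing condition, and the observation that matching the degree-$m$ polynomials at the $m$ points $\a_j$ is enough to pin down all $m$ residues (equivalently, that the Lagrange-type system is nonsingular because the $\a_j$ are distinct).
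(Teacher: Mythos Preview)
Your proof is correct and complete. It differs from the paper's argument in one respect worth noting. The paper also reduces the claim $\mu_j=\b_j$ to the identity $\suml_{j=1}^m \rho_j/(\b_k-\a_j)=1$ for $k=1,\dots,m$, but then views this as a linear system in the unknowns $\rho_j$ and solves it by induction on $m$: multiplying the $k$-th equation by $\b_k-\a_N$ and subtracting the $N$-th equation from the others eliminates $\rho_N$ and reduces to the size-$(N-1)$ system in the rescaled unknowns $\hat\rho_j=\rho_j(\a_N-\a_j)/(\b_N-\a_j)$. Your route instead clears denominators once, matches the resulting degree-$m$ polynomial with $\prod_i(\b_i-\lambda)$ by comparing leading coefficients, and then evaluates at $\lambda=\a_j$ to read off each $\rho_j$ directly. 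This is the standard partial-fractions (Lagrange interpolation) trick and is shorter and more transparent than the induction; the paper's elimination, on the other hand, makes the recursive structure of the system explicit. Both arguments rely on the distinctness of the $\a_j$, and your sign check for $\rho_j>0$ is exactly the content of the paper's Remark following the theorem.
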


\begin{remark} Since the intervals $(\a_j,\b_j)$ satisfy (\ref{intervals})
then
\begin{gather*}
\forall j:\ \b_j>\a_j,\quad \forall i\not= j:\
\mathrm{sign}(\b_i-\a_j)=\mathrm{sign}(\a_i-\a_j)\not= 0
\end{gather*}
Therefore $({\b_j-\a_j})\prod\limits_{i=\overline{1,m}|i\not=
j}\ds\left({\b_i-\a_j\over \a_i-\a_j}\right)>0$ and thus the
choice of $a_j$ and $b_j$ is correct.\end{remark}

The scheme of the proof of these theorems is as follows.

In Section \ref{sec2} we introduce the functional spaces and
operators that are used throughout the proof. Also we present
well-known results describing the spectrum of the operator
$\mathcal{A}\e$

In Section \ref{sec3} we prove several technical lemmas.

In Section \ref{sec4} we show that
\begin{gather}\label{Aspectrum}
\sigma(\mathcal{A}^0)=[0,\infty)\setminus\left(\cupl_{j=1}^m(\sigma_j,\mu_j)\right)
\end{gather}

Section \ref{sec5} is a crucial part of the proof: we show that as
$\eps\to 0$ the set $\sigma(\mathcal{A}\e)$ converges in the
Hausdorff sense to the set $\sigma(\mathcal{A}^0)$.

In Section \ref{sec6} we prove that for an arbitrary $L>0$ the
spectrum $\sigma(\mathcal{A}\e)$ has at most $m$ gaps within the
interval $[0,L]$ when $\eps$ is small enough. Together with the
Hausdorff convergence this fact implies the statements of Theorem
\ref{th1}.

And finally in Section \ref{sec7} we prove Theorem \ref{th2}.

\begin{remark}
We present the proof of Theorem \ref{th1} for the case $n\geq 3$
only. For the case $n=2$ the proof is repeated word-by-word with
some small modifications (for example in formula (\ref{hat_v})
below $r^{2-n}$ has to be replaced by $\ln r$).
\end{remark}


\section{\label{sec2}Preliminaries: functional spaces and operators}

Below $\Omega$ is a domain in $\mathbb{R}^n$ with Lipschitz
boundary (if $\partial\Omega\not=\varnothing$), for simplicity we
suppose that $\partial\Omega\cap
\overline{\cupl_{i,j}G_{ij}\e}=\varnothing$. Throughout the paper
we will use the following functional spaces:

\begin{itemize}
\item $L_{2,b\e}(\Omega)$ be the Hilbert space of functions from
$L_2(\Omega)$ with the scalar product
$$(u,v)_{L_{2,b\e}(\Omega)}=\ds\intl_\Omega u(x)\overline{v(x)}b\e(x)dx$$

\item $\overset{\circ}{H}{}^1(\Omega)$ be the subspace of
$H^1(\Omega)$ consisting of functions vanishing on
$\partial\Omega$,

\item $\overset{\circ}{C}{}^\infty(\Omega)$ be the space of
functions from $C^\infty({\Omega})$ compactly supported in
$\Omega$,

\item $H^{2,\eps}(\Omega)$ be the space of functions belonging to
$H^2(\Omega\cap G_{ij}\e)$, $H^2(\Omega\cap B_{ij}\e)$ ($i\in
Z^n$, $j=1,\dots,m$), $H^2(\Omega\cap F\e)$ and satisfying
conditions (\ref{sopr}) for all shells $G_{ij}\e$ belonging to
$\Omega$,

\item $C^{2,\eps}(\Omega)$ be the space of functions belonging to
$C^2(\Omega\cap G_{ij}\e)$, $C^2(\Omega\cap B_{ij}\e)$ ($i\in
Z^n$, $j=1,\dots,m$), $C^2(\Omega\cap F\e)$ and satisfying
conditions (\ref{sopr}) for all shells $G_{ij}\e$ belonging to
$\Omega$.

\end{itemize}\smallskip

For $u,v\in H^1(\Omega)$ we denote
\begin{gather}
\label{form} \eta\e_\Omega[u,v]=\intl_{\Omega}a\e(x)\left(\nabla
u,\nabla \bar{v}\right)dx
\end{gather}
By $\eta^{N,\eps}_{\Omega}$ (\textit{resp.}
$\eta^{D,\eps}_{\Omega}$) we denote the sesquilenear form defined
by formula (\ref{form}) and the definitional domain $H^1(\Omega)$
(\textit{resp.} $\overset{\circ}{H}{}^1(\Omega)$).

Similarly to the operator $\mathcal{A}\e$ (see (\ref{op_fo})) we
define the operator $\mathcal{A}^{N,\eps}_{\Omega}$
(\textit{resp.} $\mathcal{A}^{D,\eps}_{\Omega}$) as the operator
acting in $L_{2,b\e}(\Omega)$ and associated with the form
$\eta^{N,\eps}_{\Omega}$ (\textit{resp.} $\eta^{D,\eps
}_{\Omega}$). The definitional domain
$\mathrm{dom}(\mathcal{A}^{N,\eps}_{\Omega})$ (\textit{resp.}
$\mathrm{dom}(\mathcal{A}^{D,\eps}_{\Omega})$) consists of
functions from $H^{2,\eps}(\Omega)$ satisfying the condition
$\left.{\partial u\over \partial n}\right|_{\partial\Omega}=0$
(\textit{resp.} $u|_{\partial\Omega}=0$) that justifies the upper
index "N" (\textit{resp.} "D") which indicates the Neumann
(\textit{resp.} Dirichlet) boundary conditions.

The spectra of the operators $\mathcal{A}^{N,\eps}_{\Omega}$,
$\mathcal{A}^{D,\eps}_{\Omega}$ are purely discrete. We denote by
$\left\{\lambda_k^{N,\eps}(\Omega)\right\}_{k\in\mathbb{N}}$
(\textit{resp.}
$\left\{\lambda_k^{D,\eps}(\Omega)\right\}_{k\in\mathbb{N}}$) the
sequence of eigenvalues of $\mathcal{A}^{N,\eps}_{\Omega}$
(\textit{resp.} $\mathcal{A}^{D,\eps}_{\Omega}$) written in the
increasing order and repeated according to their multiplicity.
\medskip

Now let us describe the structure of the spectrum
$\sigma(\mathcal{A}\e)$ of the operator $\mathcal{A}\e$.  The
operator $\mathcal{A}\e$ is periodic with respect to the periodic
cell $${Y}_0\e=\left\{x\in \mathbb{R}^n:\ 0< x_k<\eps,\ \forall
k\right\}$$ We denote
$\mathbb{T}^n=\left\{\theta=(\theta_1,\dots,\theta_n)\in
\mathbb{C}^n:\ |\theta_k|=1,\ \forall k\right\}$. For $\theta\in
\mathbb{T}^n$ we introduce the functional space
$H_\theta^1({{{Y}}}_0\e)$ consisting of functions from
$H^1({{{Y}}}_0\e)$ that satisfy the following condition on
$\partial{{{Y}}}_0\e$:
\begin{gather}\label{theta1}
\forall k=\overline{1,n}:\quad u(x+\eps e_k)=\theta_k u(x)\text{
for }x=\underset{^{\overset{\qquad\quad\uparrow}{\qquad\quad
k\text{-th place}}\qquad }}{(x_1,x_2,\dots,0,\dots,x_n)}
\end{gather}
where $e_k={(0,0,\dots,1,\dots,0)}$.

By $\eta^{\theta,\eps}_{{{{Y}}}_0\e}$ we denote the sesquilenear
form defined by formula (\ref{form}) (with ${{{Y}}}_0\e$ instead
of $\Omega$) and the definitional domain
$H_\theta^1({{{Y}}}_0\e)$.

We define the operator $\mathcal{A}^{\theta,\eps}_{{{{Y}}}_0\e}$
as the operator acting in $L_{2,b\e}({{{{Y}}}_0\e})$ and
associated with the form $\eta^{\theta,\eps}_{{{{Y}}}_0\e}$. Its
definitional domain
$\mathrm{dom}(\mathcal{A}^{\theta,\eps}_{{{{Y}}}_0\e})$ consists
of the functions from $H^{2,\eps}({Y}_0\e)$ satisfying the
condition (\ref{theta1}) and the condition
\begin{gather*}
\forall k=\overline{1,n}:\quad {\partial u\over\partial
x_k}(x+\eps e_k)=\theta_k {\partial u\over\partial x_k}(x)\text{
for }x=\underset{^{\overset{\qquad\quad\uparrow}{\qquad\quad
k\text{-th place}}\qquad }}{(x_1,x_2,\dots,0,\dots,x_n)}
\end{gather*}

The operator $\mathcal{A}^{\theta,\eps}_{{{{Y}}}_0\e}$ has purely
discrete spectrum. We denote by
$\left\{\lambda_k^{\theta,\eps}({{{Y}}}_0\e)\right\}_{k\in\mathbb{N}}$
the sequence of eigenvalues of
$\mathcal{A}^{\theta,\eps}_{{{{Y}}}_0\e}$ written in the
increasing order and repeated according to their multiplicity.

From the min-max principle (see e.g. \cite{Reed}) and the
enclosure $H^1({{{Y}}}_0\e) \supset H^1_\theta({{{Y}}}_0\e)\supset
\overset{\circ}{H}{}^1({{{Y}}}_0\e)$ one can easily obtain the
inequality
\begin{gather}\label{enclosure}
\forall k\in \mathbb{N}:\quad
\lambda_k^{N,\eps}({{{Y}}}_0\e)\leq\lambda_k^{\theta,\eps}({{{Y}}}_0\e)\leq
\lambda_k^{D,\eps}({{{Y}}}_0\e)
\end{gather}

The following fundamental result (see e.g. \cite{Kuchment})
establishes the relationship between the spectra of the operators
$\mathcal{A}\e$ and $\mathcal{A}^{\theta,\eps}_{{{{Y}}}_0\e}$.

\begin{theorem*} One has
\begin{gather}\label{repres1}
\sigma(\mathcal{A}\e)=\cupl_{k=1}^\infty
\mathcal{J}_k(\mathcal{A}\e)
\end{gather}
where $\mathcal{J}_k(\mathcal{A}\e)=\cupl_{\theta\in \mathbb{T}^n}
\left\{\lambda_k^{\theta,\eps}({{{Y}}}_0\e)\right\}$. The sets
$\mathcal{J}_k(\mathcal{A}\e)$ are compact intervals.
\end{theorem*}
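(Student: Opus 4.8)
This is the Floquet--Bloch (Gelfand) decomposition, so the plan is to realise $\mathcal{A}\e$ as a direct integral over the dual torus of its fibre operators $\mathcal{A}^{\theta,\eps}_{{Y}_0\e}$ and then read off the spectrum. Since $a\e$ and $b\e$ are $\eps\mathbb{Z}^n$-periodic, for compactly supported smooth $u$ set
\begin{gather*}
(\mathcal{U}u)(x,\theta)=\suml_{i\in\mathbb{Z}^n}u(x+\eps i)\,\theta^{-i},\qquad\theta^{-i}:=\theta_1^{-i_1}\cdots\theta_n^{-i_n}\quad(\theta\in\mathbb{T}^n).
\end{gather*}
A direct check shows that for each fixed $\theta$ the function $(\mathcal{U}u)(\cdot,\theta)$ obeys the quasi-periodicity condition (\ref{theta1}) on $\partial{Y}_0\e$, and that $\mathcal{U}$ extends to a unitary map from $L_{2,b\e}(\mathbb{R}^n)$ onto the direct integral $\int^\oplus_{\mathbb{T}^n}\mathcal{H}\,d\theta$ with constant fibre $\mathcal{H}=L_{2,b\e}({Y}_0\e)$ ($d\theta$ being normalized Haar measure on $\mathbb{T}^n$), carrying $H^1(\mathbb{R}^n)$ onto $\int^\oplus_{\mathbb{T}^n}H^1_\theta({Y}_0\e)\,d\theta$. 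Because $a\e$ is $\eps\mathbb{Z}^n$-periodic the form diagonalises,
\begin{gather*}
\eta\e_{\mathbb{R}^n}[u,v]=\intl_{\mathbb{T}^n}\eta^{\theta,\eps}_{{Y}_0\e}\big[(\mathcal{U}u)(\cdot,\theta),(\mathcal{U}v)(\cdot,\theta)\big]\,d\theta,
\end{gather*}
and hence $\mathcal{A}\e=\mathcal{U}^{-1}\Big(\int^\oplus_{\mathbb{T}^n}\mathcal{A}^{\theta,\eps}_{{Y}_0\e}\,d\theta\Big)\mathcal{U}$. Working at the level of the forms sidesteps the transmission conditions (\ref{sopr}) entirely.

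I would then invoke the spectral theory of direct integrals (see e.g.\ \cite{Reed}, Thm.~XIII.85, or \cite{Kuchment}): for such a measurable family of self-adjoint fibre operators $\sigma(\mathcal{A}\e)=\overline{\cupl_{\theta\in\mathbb{T}^n}\sigma(\mathcal{A}^{\theta,\eps}_{{Y}_0\e})}$, where the inclusion $\cupl_{\theta\in\mathbb{T}^n}\sigma(\mathcal{A}^{\theta,\eps}_{{Y}_0\e})\subseteq\sigma(\mathcal{A}\e)$ relies on the continuity of the band functions established below. Since each $\mathcal{A}^{\theta,\eps}_{{Y}_0\e}$ has purely discrete spectrum $\{\lambda_k^{\theta,\eps}({Y}_0\e)\}_{k\in\mathbb{N}}$, this reads $\sigma(\mathcal{A}\e)=\overline{\cupl_{k=1}^\infty\mathcal{J}_k(\mathcal{A}\e)}$ with $\mathcal{J}_k(\mathcal{A}\e)=\cupl_{\theta\in\mathbb{T}^n}\{\lambda_k^{\theta,\eps}({Y}_0\e)\}$. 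It remains to show that each $\mathcal{J}_k(\mathcal{A}\e)$ is a compact interval and that $\cupl_k\mathcal{J}_k(\mathcal{A}\e)$ is itself closed.

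For the first claim the only subtlety is that the form domain $H^1_\theta({Y}_0\e)$ depends on $\theta$. I would remove this dependence by the standard Bloch substitution: writing $\theta_k=e^{i\phi_k}$ and conjugating $\mathcal{A}^{\theta,\eps}_{{Y}_0\e}$ by the unitary multiplication operator $u\mapsto\exp\!\big(-i\eps^{-1}\suml_{k=1}^n\phi_kx_k\big)u$ yields a self-adjoint operator in the \emph{fixed} space $L_{2,b\e}({Y}_0\e)$ with genuinely periodic boundary conditions, whose quadratic form --- and hence its resolvent --- depends real-analytically on the quasimomentum $\phi=(\phi_1,\dots,\phi_n)$. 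Consequently $\lambda_k^{\theta,\eps}({Y}_0\e)$ is a continuous function of $\theta\in\mathbb{T}^n$ (most simply by the min--max principle together with norm-continuity of the resolvent in $\phi$; Kato's analytic perturbation theory even yields piecewise analyticity). As $\mathbb{T}^n$ is compact and connected, $\mathcal{J}_k(\mathcal{A}\e)$ is a compact connected subset of $\mathbb{R}$, i.e.\ a compact interval. For the second claim, (\ref{enclosure}) gives $\minl_{\theta\in\mathbb{T}^n}\lambda_k^{\theta,\eps}({Y}_0\e)\ge\lambda_k^{N,\eps}({Y}_0\e)\underset{k\to\infty}\to\infty$ (the Neumann operator on the bounded cell ${Y}_0\e$ has compact resolvent), so every bounded interval meets only finitely many $\mathcal{J}_k(\mathcal{A}\e)$; a locally finite union of closed sets is closed, so $\cupl_k\mathcal{J}_k(\mathcal{A}\e)$ is closed and the outer closure above drops, giving (\ref{repres1}).

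The main obstacle is exactly the continuity of the band functions $\theta\mapsto\lambda_k^{\theta,\eps}({Y}_0\e)$, i.e.\ controlling the $\theta$-dependence of the quasi-periodic boundary condition; once that is dealt with by the multiplier substitution, the rest is a routine application of the direct-integral machinery.
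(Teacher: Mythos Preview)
Your sketch is correct and follows the standard Floquet--Bloch route (Gelfand transform, direct-integral decomposition, continuity of band functions via the multiplier substitution, local finiteness from the Neumann lower bound). There is nothing to compare here, though: the paper does not prove this theorem at all. It is quoted as a known ``fundamental result'' with a reference to \cite{Kuchment}, and is used as a black box throughout. So you have supplied a proof where the paper simply cites one; your argument is exactly the one that underlies the cited reference.
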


\begin{remark}\label{rem21} It is clear that if $\eps^{-1}\in
\mathbb{N}$ then $\mathcal{A}\e$ is also ${{{Y}}}$-periodic
operator, i.e. $a\e(x+i)=a\e(x)$, $b\e(x+i)=b\e(x)$ for any $i\in
\mathbb{Z}^n$, $x\in\mathbb{R}^n$. So in this case we have an
analogous representation
\begin{gather}\label{repres2}
\sigma(\mathcal{A}\e)=\cupl_{k=1}^\infty
\mathcal{\hat{J}}_k(\mathcal{A}\e)
\end{gather}
where $\mathcal{\hat{J}}_k(\mathcal{A}\e)=\cupl_{\theta\in
\mathbb{T}^n} \left\{\lambda_k^{\theta,\eps}({{{Y}}})\right\}$,
$\lambda_k^{\theta,\eps}({{{Y}}})$ is the $k$-th eigenvalue of the
operator $\mathcal{A}^{\theta,\eps}_{{{{Y}}}}$ which acts in
$L_{2,b\e}({{{Y}}})$ and is defined by the operation (\ref{local})
and the definitional domain
\begin{gather*}
\mathrm{dom}(\mathcal{A}^{\theta,\eps}_{{{{Y}}}})=\left\{u\in
H^{2,\eps}({{{Y}}}):\ \forall k=\overline{1,n}\
\begin{cases}u(x+e_k)=\theta_k u(x)\\\ds{\partial u\over\partial x_k}(x+ e_k)=\theta_k
{\partial u\over\partial x_k}(x)\end{cases}\text{for }
x=\underset{^{\overset{\qquad\quad\uparrow}{\qquad\quad k\text{-th
place}}\qquad }}{(x_1,x_2,\dots,0,\dots,x_n)}\right\}
\end{gather*}

Studying the Hausdorff convergence of $\sigma(\mathcal{A}\e)$ as
$\eps\to 0$ we will use the representation (\ref{repres2}), while
estimating the number of gaps in the interval $[0,L]$ we will use
the representation (\ref{repres1}).
\end{remark}


\section{\label{sec3}Auxiliary lemmas}

In this section we prove some technical lemmas. In order to
formulate them we introduce some additional notations.

We denote
$$\kappa={1\over 2}\minl_{j=\overline{1,m}}
\dist\left({B}_j,\partial{{{Y}}}\cup\left(\cupl_{k\not=
j}{B}_k\right)\right)$$ Recall that the closed balls ${B}_j$ are
pairwise disjoint and belong to the open cube ${{{Y}}}$, hence
$\kappa>0$.

We introduce the following sets (below $i\in \mathbb{Z}^n$,
$j=1,\dots,m$):
\begin{itemize}

\item ${{{Y}}}_i\e=\left\{x=(x_1,\dots,x_n)\in \mathbb{R}^n:\
i\eps< x_k<(i+1)\eps,\ \forall k\right\}$

\item $F\e_{i}={{{Y}}}_i\e\setminus\cupl_{j=1}^m
\left(\overline{B_{ij}\e\cup G_{ij}\e}\right)$

\item $R_{ij}\e=\left\{x\in \mathbb{R}^n:\ r\e< |x-x_{ij}\e|<
r\e+\kappa\eps\right\}$

\item $D_{ij}\e=\left\{x\in \mathbb{R}^n:\ |x-x_{ij}\e|<
r\e+\kappa\eps\right\}=B_{ij}\e\cup \overline{G_{ij}\e}\cup
R_{ij}\e$

\item $S_{ij}\e=\left\{x\in \mathbb{R}^n:\ |x-x_{ij}\e|=
r\e+\kappa\eps\right\}=\partial D_{ij}\e$

\item $\hat C_{ij}\e=\left\{x\in \mathbb{R}^n:\ |x-x_{ij}\e|=
r\e\right\}$

\item $\check C_{ij}\e=\left\{x\in \mathbb{R}^n:\ |x-x_{ij}\e|=
r\e-d\e\right\}$

\end{itemize}
We also denote
$$\I=\left\{i=(i_1,\dots,i_n)\in \mathbb{Z}^n:\ 0\leq
i_k\leq (\eps^{-1}-1), \forall k \right\}$$ and set
\begin{gather*}
G_{Y}\e=\cupl_{i\in \I}\cupl_{j=1}^m G_{ij}\e,\quad
B_{Y}\e=\cupl_{i\in \I}\cupl_{j=1}^m B_{ij}\e,\quad
F_{{{Y}}}\e=\cupl_{i\in\I} F_{i}\e
\end{gather*}
Remark that if $\eps^{-1}\in \mathbb{N}$ then
$\overline{{{{Y}}}}=\cupl_{i\in\I}\overline{{{{Y}}}_i\e}$.

By $\langle u\rangle_B$ we denote the average value of the
function $u$ over the domain $B\subset \mathbb{R}^n$ (if $|B|\not=
0$), i.e. $\langle u \rangle_{B}=\ds{1\over |B|}\intl_{B}u(x) dx$.
If $\Sigma\subset \mathbb{R}^n$ is a $(n-1)$-dimensional surface
then the Euclidean metrics in $\mathbb{R}^n$ induces on $\Sigma$
the Riemannian metrics and measure. We denote by $ds$ the density
of this measure. Again by $\langle u\rangle_\Sigma$ we denote the
average value of the function $u$ over $\Sigma$, i.e $\langle
u\rangle_\Sigma=\ds{1\over |\Sigma|}\intl_{\Sigma}u ds$ (here
$|\Sigma|=\intl_\Sigma ds$).

If $\eta[u,v]$ is a sesquilinear form then we preserve the same
notation $\eta$ for the corresponding quadratic form, i.e
$\eta[u]=\eta[u,u]$.

By $\chi_{_{\Omega}}$ we denote an indicator function of the
domain $\Omega$, i.e. $\chi_{_{\Omega}}(x)=1$ for $x\in\Omega$ and
$\chi_{_{\Omega}}(x)=0$ otherwise.

In what follows by $C,C_1...$ we denote generic constants that do
not depend on $\eps$.

\begin{lemma}\label{lm20}
 Let $D$ be a convex domain in $\mathbb{R}^n$,
$d$ be the diameter of $D$, $X$ and $Y$ be arbitrary measurable
subsets of $D$. Then for any $v\in H^1(D)$ the following
inequality holds:
\begin{gather*}
\left|\langle v\rangle _X-\langle v\rangle_Y\right|^2\leq
C\|\nabla v\|^2_{L_2(D)}{d^{n+2}\over |X|\cdot |Y|}
\end{gather*}
\end{lemma}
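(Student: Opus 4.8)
The inequality compares two averages of a single $H^1$ function over subsets of a convex domain, so the natural tool is a Poincaré-type estimate. The plan is to insert the average of $v$ over the whole domain $D$ as an intermediate quantity: write
\begin{gather*}
\left|\langle v\rangle_X-\langle v\rangle_Y\right|^2\leq 2\left|\langle v\rangle_X-\langle v\rangle_D\right|^2+2\left|\langle v\rangle_D-\langle v\rangle_Y\right|^2,
\end{gather*}
so it suffices to bound $\left|\langle v\rangle_X-\langle v\rangle_D\right|^2$ by $C\|\nabla v\|^2_{L_2(D)}d^{n+2}/(|X||D|)$ (and symmetrically for $Y$), after which one uses $|D|\geq$ a constant times... actually one keeps $|D|$ and notes $|D|\ge \max(|X|,|Y|)$ is the wrong direction, so instead I would aim directly for the bound with $|X|$ in the denominator and absorb the remaining volume factor into $d^{n+2}$ using $|D|\le C d^n$.

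The key step is the following: for any measurable $X\subset D$,
\begin{gather*}
\left|\langle v\rangle_X-\langle v\rangle_D\right|=\left|\frac{1}{|X|}\intl_X (v-\langle v\rangle_D)\,dx\right|\leq \frac{1}{|X|}\,|X|^{1/2}\,\|v-\langle v\rangle_D\|_{L_2(D)}=\frac{\|v-\langle v\rangle_D\|_{L_2(D)}}{|X|^{1/2}},
\end{gather*}
by Cauchy--Schwarz. Then I invoke the Poincaré inequality on the convex domain $D$: $\|v-\langle v\rangle_D\|_{L_2(D)}\leq C_P(D)\|\nabla v\|_{L_2(D)}$. For a convex domain the Poincaré constant scales like the diameter, $C_P(D)\le C\,d$ with $C$ an absolute dimensional constant (this is the Payne--Weinberger bound, where the constant does not even depend on the shape beyond convexity); I would cite this as a standard fact. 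Combining, $\left|\langle v\rangle_X-\langle v\rangle_D\right|^2\leq C d^2\|\nabla v\|^2_{L_2(D)}/|X|$, and the analogous estimate holds with $Y$.

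Finally I would combine the two halves and rewrite the result in the stated symmetric form. From the above, $\left|\langle v\rangle_X-\langle v\rangle_Y\right|^2\leq C d^2\|\nabla v\|^2_{L_2(D)}\left(\tfrac{1}{|X|}+\tfrac{1}{|Y|}\right)$. Since $|X|,|Y|\leq |D|\leq \omega_n d^n$ (the volume of a convex set is at most that of a ball of the same diameter, $\omega_n$ the volume of the unit ball), we have $\tfrac{1}{|X|}+\tfrac{1}{|Y|}=\tfrac{|X|+|Y|}{|X||Y|}\leq \tfrac{2\omega_n d^n}{|X||Y|}$, which yields
\begin{gather*}
\left|\langle v\rangle_X-\langle v\rangle_Y\right|^2\leq C\|\nabla v\|^2_{L_2(D)}\,\frac{d^{n+2}}{|X|\cdot|Y|}
\end{gather*}
with a new constant $C$ depending only on $n$, as claimed. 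The only mild subtlety, and the step I would be most careful about, is citing the Poincaré inequality with the correct diameter scaling and the fact that the constant is uniform over all convex domains; everything else is Cauchy--Schwarz and elementary volume bounds.
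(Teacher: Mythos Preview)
Your proof is correct. The paper does not give its own argument but simply refers to Lemma~4.9 in Marchenko--Khruslov, where the analogous estimate is obtained by a direct line-integral argument: on a convex domain one writes $v(x)-v(y)=\int_0^1\nabla v\bigl((1-t)x+ty\bigr)\cdot(y-x)\,dt$, averages over $x\in X$ and $y\in Y$, and bounds the resulting triple integral with Cauchy--Schwarz and a change of variables, which produces the factor $d^{n+2}/(|X|\,|Y|)$ directly. Your route via the Payne--Weinberger Poincar\'e inequality is cleaner and more modular---it packages the convexity input into a single citable bound---at the price of the extra step converting $d^{2}\bigl(\tfrac{1}{|X|}+\tfrac{1}{|Y|}\bigr)$ into the symmetric form $d^{n+2}/(|X|\,|Y|)$ via $|D|\le C_n d^n$. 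Both approaches give a constant depending only on $n$. One small wording slip: your parenthetical justifies $|D|\le\omega_n d^n$ by fitting $D$ in a ball of the \emph{same diameter}, but $\omega_n d^n$ is the volume of a ball of \emph{radius} $d$; the crude inclusion $D\subset B(x_0,d)$ for any $x_0\in D$ already gives what you need.
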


\begin{proof}The lemma
is proved in a similar way as Lemma 4.9 from \cite[p.117]{March}.
\end{proof}


\begin{lemma}\label{lm21} Let $\eps=\eps_N={1\over N}$, $N=1,2,3\dots$ Let $v\e\in
H^1({{{Y}}})$, $\|v\e\|^2_{H^1({{{Y}}})}<C$, $v\e\underset{\eps\to
0}\to v\in H^1({{{Y}}})$ strongly in $L_2({{{Y}}})$. Then $\forall
j=\overline{1,m}$:
\begin{gather}\label{lm21_i1}
\suml_{i\in \mathcal{I}\e} \langle
v\e\rangle_{S_{ij}\e}{\chi}_{_{{Y}_i\e}}\underset{\eps\to 0}\to
v\text{ strongly in }L_2({{{Y}}})
\\\label{lm21_i3}\suml_{i\in \mathcal{I}\e}
\langle v\e\rangle_{F_{i}\e}{\chi}_{_{{Y}_i\e}}\underset{\eps\to
0}\to v\text{ strongly in }L_2({{{Y}}})
\end{gather}

\end{lemma}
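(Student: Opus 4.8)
\textbf{Proof proposal for Lemma \ref{lm21}.}

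The plan is to reduce the convergence of the piecewise-constant averaging functions to the strong convergence of $v\e$ in $L_2({{{Y}}})$, using Lemma \ref{lm20} to control the error committed when one replaces $v\e$ on a cell ${{{Y}}}_i\e$ by any one of its averages over a fixed-proportion subset. The key observation is that all the sets $S_{ij}\e$, $F_i\e$ (and ${{{Y}}}_i\e$ itself) are obtained by $\eps$-scaling of fixed configurations inside ${{{Y}}}$, so up to $\eps$-independent constants one has $|S_{ij}\e|\asymp \eps^{n-1}$, $|F_i\e|\asymp\eps^n$, $|{{{Y}}}_i\e|=\eps^n$, and each ${{{Y}}}_i\e$ is convex with diameter $\sqrt n\,\eps$.

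First I would prove the auxiliary estimate that, for any $v\in H^1({{{Y}}}_i\e)$ and for $Z\e$ equal to either $S_{ij}\e$ or $F_i\e$,
\begin{gather*}
\left|\langle v\rangle_{Z\e}-\langle v\rangle_{{{{Y}}}_i\e}\right|^2\leq C\eps^{2-?}\|\nabla v\|^2_{L_2({{{Y}}}_i\e)}.
\end{gather*}
Concretely, applying Lemma \ref{lm20} with $D={{{Y}}}_i\e$, $d=\sqrt n\,\eps$, $X=Z\e$, $Y={{{Y}}}_i\e$ gives a bound $C\|\nabla v\|^2_{L_2({{{Y}}}_i\e)}\,\eps^{n+2}/(|Z\e|\,\eps^n)$; for $Z\e=F_i\e$ this is $\le C\eps^2\|\nabla v\|^2_{L_2({{{Y}}}_i\e)}$, and for $Z\e=S_{ij}\e$ it is $\le C\eps^3\|\nabla v\|^2_{L_2({{{Y}}}_i\e)}$. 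Summing the squared differences over $i\in\I$ and weighting each by $|{{{Y}}}_i\e|=\eps^n$, I get
\begin{gather*}
\left\|\suml_{i\in\I}\langle v\e\rangle_{Z\e}\chi_{_{{{{Y}}}_i\e}}-\suml_{i\in\I}\langle v\e\rangle_{{{{Y}}}_i\e}\chi_{_{{{{Y}}}_i\e}}\right\|^2_{L_2({{{Y}}})}=\suml_{i\in\I}\eps^n\left|\langle v\e\rangle_{Z\e}-\langle v\e\rangle_{{{{Y}}}_i\e}\right|^2\leq C\eps^2\suml_{i\in\I}\|\nabla v\e\|^2_{L_2({{{Y}}}_i\e)}=C\eps^2\|\nabla v\e\|^2_{L_2({{{Y}}})}\leq C\eps^2,
\end{gather*}
using the uniform bound $\|v\e\|^2_{H^1({{{Y}}})}<C$. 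Hence it suffices to prove that the "cell-averaged" function $\Pi\e v\e:=\suml_{i\in\I}\langle v\e\rangle_{{{{Y}}}_i\e}\chi_{_{{{{Y}}}_i\e}}$ converges to $v$ strongly in $L_2({{{Y}}})$, which is both statements \eqref{lm21_i1} and \eqref{lm21_i3} at once.

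For the convergence $\Pi\e v\e\to v$ in $L_2({{{Y}}})$ I would argue in two steps. Write $\Pi\e v\e=\Pi\e(v\e-v)+\Pi\e v$. The operator $\Pi\e$ is an orthogonal projection in $L_2({{{Y}}})$ (onto piecewise constants on the $\eps$-grid), so $\|\Pi\e(v\e-v)\|_{L_2({{{Y}}})}\le\|v\e-v\|_{L_2({{{Y}}})}\to 0$ by hypothesis. And $\Pi\e v\to v$ in $L_2({{{Y}}})$ is the classical fact that cell-averaging converges to the identity strongly as the mesh tends to zero — it holds for any fixed $v\in L_2({{{Y}}})$ by density of $C(\overline{{{{Y}}}})$ (on which it is uniform) together with the uniform bound $\|\Pi\e\|\le 1$. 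Combining the two steps gives $\Pi\e v\e\to v$, and then the displayed estimate above yields \eqref{lm21_i1} and \eqref{lm21_i3}.

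The only genuinely delicate point is the scaling bookkeeping for $|S_{ij}\e|$ and $|F_i\e|$ and the verification that Lemma \ref{lm20} is being applied on a genuinely convex set with the correct diameter; once the volume/area orders of magnitude $\eps^{n-1}$ and $\eps^n$ are pinned down, the powers of $\eps$ all come out nonnegative and the argument closes. (The restriction $\eps=1/N$ is used only to guarantee $\overline{{{{Y}}}}=\cupl_{i\in\I}\overline{{{{Y}}}_i\e}$, so that $\Pi\e$ is honestly defined on all of ${{{Y}}}$.) I expect no further obstacle; the proof is a routine combination of the Poincaré-type Lemma \ref{lm20} with the elementary strong convergence of mesh-averaging projections.
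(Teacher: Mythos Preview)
Your argument for \eqref{lm21_i3} is correct and matches the paper's: you compare $\langle v\e\rangle_{F_i\e}$ with $\langle v\e\rangle_{{{{Y}}}_i\e}$ via Lemma~\ref{lm20} (this is the paper's inequality~(\ref{ineq11})), and then reduce to the convergence of the cell-averaging projection. The paper closes this last step slightly differently --- it uses the Poincar\'e inequality (\ref{ineq1}) to show $\|v\e-\Pi\e v\e\|_{L_2({{{Y}}})}\le C\eps$ directly, rather than your contraction-plus-density argument --- but both routes are fine.

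For \eqref{lm21_i1}, however, there is a genuine gap. Lemma~\ref{lm20} is stated for \emph{measurable subsets} $X,Y$ of the convex domain $D\subset\mathbb{R}^n$, and the averages $\langle v\rangle_X$, $\langle v\rangle_Y$ as well as the measures $|X|,|Y|$ in its conclusion are $n$-dimensional. The sphere $S_{ij}\e$ is an $(n-1)$-dimensional surface with Lebesgue measure zero, so you cannot plug $Z\e=S_{ij}\e$ into Lemma~\ref{lm20}; your substitution $|S_{ij}\e|\asymp\eps^{n-1}$ mixes surface measure into a volume-measure inequality. The paper avoids this by inserting the solid annulus $R_{ij}\e=\{r\e<|x-x_{ij}\e|<r\e+\kappa\eps\}$ as an intermediate set: Lemma~\ref{lm20} legitimately gives $\eps^n|\langle v\e\rangle_{{{{Y}}}_i\e}-\langle v\e\rangle_{R_{ij}\e}|^2\le C\eps^2\|\nabla v\e\|^2_{L_2({{{Y}}}_i\e)}$ (inequality~(\ref{ineq2})), and then a separate trace-type estimate in spherical coordinates (inequality~(\ref{ineq3})) connects $\langle v\e\rangle_{R_{ij}\e}$ to the surface average $\langle v\e\rangle_{S_{ij}\e}$. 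That second step is precisely the missing ingredient in your argument; once you supply it (or prove a surface-average variant of Lemma~\ref{lm20}), your proof goes through.
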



\begin{proof}For an arbitrary $i\in\I$ and $j\in\{1,\dots,m\}$ one has the
following inequalities:
\begin{gather}\label{ineq1}\left\|v\e-\langle v\e
\rangle_{{{{Y}}}_{i}\e}\right\|^2_{L_2({{{Y}}}_i\e)}\leq
C\eps^2\|\nabla v\e\|^2_{L_2({{{Y}}}_i\e)}\\\label{ineq11}
\eps^n\left|\langle v\e \rangle_{{{{Y}}}_{i}\e}-\langle
v\e\rangle_{ F_{i}\e}\right|^2\leq C\eps^{2}\|\nabla
v\e\|^2_{L_2({{{Y}}}_{i}\e)}\\\label{ineq2} \eps^n\left|\langle
v\e \rangle_{{{{Y}}}_{i}\e}-\langle v\e\rangle_{
R_{ij}\e}\right|^2\leq C\eps^{2}\|\nabla
v\e\|^2_{L_2({{{Y}}}_{i}\e)}\\\label{ineq3} \eps^n\left|\langle
v\e\rangle_{S_{ij}\e}-\langle v\e\rangle_{ R_{ij}\e}\right|^2\leq
C\eps^{2}\|\nabla v\e\|^2_{L_2( R_{ij}\e)}
\end{gather}
Inequality (\ref{ineq1}) is the Poincar\'{e} inequality,
inequalities (\ref{ineq11})-(\ref{ineq2}) follow directly from
Lemma \ref{lm20}. Let us prove inequality (\ref{ineq3}). We
introduce in $R_{ij}\e$ the spherical coordinates $(r,\Theta)$,
where $r$ is a distance to $x_{ij}\e$, $\Theta$ are the angle
coordinates. Below by $\mathrm{S}_{n-1}$ we denote the
$(n-1)$-dimensional unit sphere, by $d\Theta$ we denote the
Riemannian measure on $\mathrm{S}_{n-1}$. One has
$$v\e(r\e+\kappa\eps,\Theta)-v\e(r,\Theta)=\intl_{r}^{r\e+\kappa\eps} {\partial
v\e\over\partial\rho}(\rho,\Theta)d\rho,\quad r\in
(r\e,r\e+\kappa\eps) $$ We multiply this equality by $r^{n-1}dr
d\Theta$, integrate from $r\e$ to $r\e+\kappa\eps$ (with respect
to $r$) and over $\mathrm{S}_{n-1}$ (with respect to $\Theta$),
divide by $|R_{ij}\e|$ and square. Using the Cauchy inequality we
obtain
\begin{multline*}
\left|\langle v\e\rangle_{S_{ij}\e}-\langle v\e\rangle_{
R_{ij}\e}\right|^2=
\left|{1\over|R_{ij}\e|}\intl_{\mathrm{S}_{n-1}}\intl_{r\e}^{r\e+\kappa\eps}
\left(\intl_{r}^{r\e+\kappa\eps} {\partial
v\e\over\partial\rho}(\rho,\Theta)d\rho\right)r^{n-1}drd\Theta\right|^2\leq\\\leq
C \left(\intl_{\mathrm{S}_{n-1}}\intl_{r\e}^{r\e+\kappa\eps}\left|
{\partial v\e\over\partial\rho}(\rho,\Theta)\right|^2
\rho^{n-1}d\rho d\Theta\right)\cdot\left(
\intl_{r\e}^{r\e+\kappa\eps} {d\rho\over \rho^{n-1}}\right)\leq
C_1\|\nabla v\e\|^2_{L_2( R_{ij}\e)}\eps^{2-n}
\end{multline*}
and thus (\ref{ineq3}) is proved.

It is clear that (\ref{lm21_i1}) follows from (\ref{ineq1}),
(\ref{ineq2}), (\ref{ineq3}), and (\ref{lm21_i3}) follows from
(\ref{ineq1}), (\ref{ineq11}).
\end{proof}


\begin{lemma}\label{lm22} The following inequality is valid for an arbitrary $v\in
H^1(D_{ij}\e)$:
\begin{gather}\label{g_ineq}
\|v\|^2_{L_2(G_{ij}\e)}\leq C\eps^{\gamma-1}\left\{
\eta\e_{G_{ij}\e}[v]+\eps^2\eta_{R_{ij}\e}\e[v]+\|v\|^2_{L_2(R_{ij}\e)}\right\}
\end{gather}
\end{lemma}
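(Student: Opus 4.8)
The plan is to estimate $\|v\|^2_{L_2(G_{ij}\e)}$ by tracking how $v$ behaves radially across the thin shell $G_{ij}\e$, connecting its values there to the "reservoir" $R_{ij}\e$ that surrounds $G_{ij}\e$ from outside. Working in spherical coordinates $(r,\Theta)$ centered at $x_{ij}\e$, the shell $G_{ij}\e$ occupies $r\in(r\e-d\e,r\e)$ with $d\e=\eps^\gamma$, while $R_{ij}\e$ occupies $r\in(r\e,r\e+\kappa\eps)$. For a radius $r$ in the shell and a radius $s$ in the reservoir, I would write
\begin{gather*}
v(r,\Theta)=v(s,\Theta)-\intl_r^s{\partial v\over\partial\rho}(\rho,\Theta)\,d\rho,
\end{gather*}
square, and use Cauchy--Schwarz. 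The first term on the right, once integrated over $s\in(r\e,r\e+\kappa\eps)$ and over $\mathrm{S}_{n-1}$ with the correct Jacobian, produces $\|v\|^2_{L_2(R_{ij}\e)}$ up to the factor $\eps^{n-1}$ coming from $\rho^{n-1}$ on the sphere of radius $\sim\eps$; dividing through appropriately yields a contribution of order $\eps^{\gamma-1}\|v\|^2_{L_2(R_{ij}\e)}$ after accounting for the thinness $d\e=\eps^\gamma$ of $G_{ij}\e$ in the radial direction. This matches the last term in \eqref{g_ineq}.

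Next I would split the radial-derivative integral $\intl_r^s(\partial v/\partial\rho)\,d\rho$ into the part over the shell, $\rho\in(r\e-d\e,r\e)$, and the part over the reservoir, $\rho\in(r\e,s)$. On the shell, the coefficient $a\e$ equals $a_j\e=a_j\eps^{\gamma+1}$, so I would introduce that weight artificially:
\begin{gather*}
\left|\intl_{r\e-d\e}^{r\e}{\partial v\over\partial\rho}\,d\rho\right|^2
\le\left(\intl_{r\e-d\e}^{r\e}a_j\e\left|{\partial v\over\partial\rho}\right|^2\rho^{n-1}\,d\rho\right)
\left(\intl_{r\e-d\e}^{r\e}{d\rho\over a_j\e\,\rho^{n-1}}\right).
\end{gather*}
The first factor, integrated over $\mathrm{S}_{n-1}$, is bounded by $\eta\e_{G_{ij}\e}[v]$; the second factor is of size $d\e/(a_j\e\eps^{n-1})=\eps^\gamma/(a_j\eps^{\gamma+1}\eps^{n-1})=C\eps^{-n}$. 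Multiplying by the sphere Jacobian $\eps^{n-1}$ that appears when I integrate the squared identity over $G_{ij}\e$, and by the radial measure $d\e=\eps^\gamma$ of the shell, one collects a net power $\eps^{n-1}\cdot\eps^\gamma\cdot\eps^{-n}=\eps^{\gamma-1}$, giving exactly the $\eps^{\gamma-1}\eta\e_{G_{ij}\e}[v]$ term. For the reservoir part of the derivative integral I would estimate $\left|\intl_{r\e}^{s}(\partial v/\partial\rho)\,d\rho\right|^2$ by $\|\nabla v\|^2_{L_2(R_{ij}\e)}\eps^{2-n}$ as in the proof of \eqref{ineq3} (since $a\e\equiv1$ on $R_{ij}\e$, $\eta_{R_{ij}\e}\e[v]=\|\nabla v\|^2_{L_2(R_{ij}\e)}$), and tracking the same $\eps^{n-1}\cdot\eps^\gamma$ prefactors yields a contribution of order $\eps^{\gamma-1}\cdot\eps^{2}\cdot\eta_{R_{ij}\e}\e[v]$, i.e.\ the middle term $\eps^{\gamma+1}\eta_{R_{ij}\e}\e[v]$ in \eqref{g_ineq}, consistent with the stated $C\eps^{\gamma-1}(\eps^2\eta_{R_{ij}\e}\e[v])$.

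Assembling these three pieces via $|x+y+z|^2\le 3(|x|^2+|y|^2+|z|^2)$ gives \eqref{g_ineq}. The main obstacle is purely bookkeeping: one must keep the powers of $\eps$ straight through three nested integrations (radial over $G_{ij}\e$, angular over $\mathrm{S}_{n-1}$, and the auxiliary radial integral over $R_{ij}\e$ or $G_{ij}\e$) while correctly inserting and removing the weight $a_j\e=a_j\eps^{\gamma+1}$ — the point of the estimate is precisely that the smallness of $a\e$ on the shell is compensated by the appearance of $a\e$ inside $\eta\e_{G_{ij}\e}$, so the two powers $\eps^{\gamma+1}$ cancel and only $\eps^{\gamma-1}$ survives. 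A minor technical care is needed because $r\e-d\e$ and $r\e$ are both of order $\eps$, so $\rho^{n-1}$ is uniformly comparable to $\eps^{n-1}$ on the whole range of integration, which is what makes all the $\rho$-integrals elementary; I would note this comparability once at the outset and then treat $\rho^{n-1}\asymp\eps^{n-1}$ throughout.
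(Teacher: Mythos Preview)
Your proposal is correct and the bookkeeping of $\eps$-powers is accurate. The approach is close to the paper's but organized slightly differently: the paper proceeds in two steps, first bounding $\|v\|^2_{L_2(G_{ij}\e)}$ by the trace $\|v\|^2_{L_2(\hat C_{ij}\e)}$ on the outer sphere $\{r=r\e\}$ plus $\eps^{\gamma-1}\eta\e_{G_{ij}\e}[v]$ (via $v(r,\Theta)=v(r\e,\Theta)+\int_{r\e}^r\partial_\rho v\,d\rho$), and then invoking a separate trace inequality $\|v\|^2_{L_2(\hat C_{ij}\e)}\le C(\eps^{-1}\|v\|^2_{L_2(R_{ij}\e)}+\eps\|\nabla v\|^2_{L_2(R_{ij}\e)})$. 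You instead bypass the intermediate trace by writing $v(r,\Theta)=v(s,\Theta)-\int_r^s\partial_\rho v\,d\rho$ with a variable $s$ in the reservoir and averaging over $s$; this folds the two steps into one. The paper's version is a touch more modular (the trace estimate is a reusable standalone fact), while yours is more direct and avoids naming the trace at all. Either way the computation and the resulting powers of $\eps$ are identical.
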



\begin{proof}As in the proof of Lemma \ref{lm21} we introduce in
$G_{ij}\e$ the spherical coordinates $(r,\Theta)$. One has
\begin{gather}\label{star1}
v(r,\Theta)=v(r\e,\Theta)+\intl_{r\e}^{r}{\partial v\over
\partial \rho}(\rho,\Theta)d\rho,\ r\in (r\e-d\e,r\e)
\end{gather}
Taking into account (\ref{a}) we obtain from (\ref{star1})
\begin{multline*}
\intl_{\mathrm{S}_{n-1}}\intl_{r\e-d\e}^{r\e}|v(r,\Theta)|^2
r^{n-1}dr d\Theta\leq 2\left(\intl_{r\e-d\e}^{r\e}
{r^{n-1}dr}\right)\cdot\left((r\e)^{1-n}\intl_{\mathrm{S}_{n-1}}\left|v(r\e,\Theta)\right|^2
(r\e)^{n-1}d\Theta
 +\right.\\\left.
+\intl_{\mathrm{S}_{n-1}}\left(\intl_{r\e-d\e}^{r\e}\left|{\partial
v\over\partial\rho}(\rho,\Theta)\right|^2\rho^{n-1}d\rho\cdot
\intl_{r\e-d\e}^{r\e} {d\rho\over
\rho^{n-1}}\right)d\Theta\right)\leq
C\left(\eps^{\gamma}\|v\|^2_{L_2(\hat{C}_{ij}\e)}+\eps^{\gamma-1}\eta_{G_{ij}\e}^{\eps}[v]\right)
\end{multline*}
Similarly we obtain
\begin{gather*}
\|v\|^2_{L_2(\hat{C}_{ij})}\leq
C\left(\eps^{-1}\|v\|^2_{L_2(R_{ij}\e)}+\eps\|\nabla
v\|^2_{L_2(R_{ij}\e)}\right)
\end{gather*}
The statement of the lemma follows directly from the last two
inequalities.
\end{proof}


\begin{lemma}\label{lm23}
$\liml_{\eps\to 0}\lambda_1^{D,\eps}(D_{ij}\e)=\sigma_j$, where
$\sigma_j$ ($j=1,\dots,m$) are defined by (\ref{sigmarho}).
\end{lemma}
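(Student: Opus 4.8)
The plan is to compute the limit of the first Dirichlet eigenvalue $\lambda_1^{D,\eps}(D_{ij}\e)$ by understanding the geometry of the domain $D_{ij}\e = B_{ij}\e \cup \overline{G_{ij}\e} \cup R_{ij}\e$. The key heuristic is that on the thin shell $G_{ij}\e$ the coefficient $a_j\e = a_j\eps^{\gamma+1}$ is very small, so the shell acts as a ``soft spring'' connecting the inner ball $B_{ij}\e$ (which carries density $b_j$) to the outer annular region $R_{ij}\e$, where the Dirichlet condition on $S_{ij}\e = \partial D_{ij}\e$ is imposed. In the limit, the eigenfunction should be essentially constant on $B_{ij}\e$ (say with value $t$), essentially zero on $R_{ij}\e$ (forced by the Dirichlet condition and the fact that $R_{ij}\e$ is ``stiff''), and should interpolate across the shell. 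The energy stored in the shell for such an interpolating profile, computed via the Dirichlet integral of a radial harmonic-type function across a shell of radii $\approx r\eps - \eps^\gamma$ to $r\eps$ and thickness $\eps^\gamma$, scales so that $a_j\e \cdot (\text{gradient})^2 \cdot (\text{volume})$ gives a finite limiting quadratic form in $t$; dividing by the mass $b_j |B_{ij}\e| |t|^2$ produces exactly $\sigma_j = n a_j/(r b_j)$.

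First I would set up the min-max characterization: $\lambda_1^{D,\eps}(D_{ij}\e) = \min\{\eta^\eps_{D_{ij}\e}[v]/\|v\|^2_{L_{2,b\e}(D_{ij}\e)} : v \in \overset{\circ}{H}{}^1(D_{ij}\e)\}$. For the \textbf{upper bound} I would plug in an explicit test function: on $R_{ij}\e$ take $v=0$ (or a rapid cutoff), on $B_{ij}\e$ take $v \equiv 1$, and on the shell $G_{ij}\e$ take the radial function solving $\Delta v = 0$ in the shell with the matching boundary values, i.e. $v(r) = c_1 + c_2 r^{2-n}$ (for $n\ge 3$; $c_1 + c_2 \ln r$ for $n=2$) chosen to equal $1$ on $\check C_{ij}\e$ and $0$ on $\hat C_{ij}\e$. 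Computing $\eta^\eps_{G_{ij}\e}[v] = a_j\eps^{\gamma+1}\int_{G_{ij}\e}|\nabla v|^2$ for this profile and using $r\e = r\eps$, $d\e = \eps^\gamma$, a Taylor expansion in the small thickness gives $\int_{G_{ij}\e}|\nabla v|^2 \sim |\mathrm{S}_{n-1}| (r\eps)^{n-1} \cdot (n \cdot\text{something})/\eps^\gamma$, while $\|v\|^2_{L_{2,b\e}} \sim b_j |B_{ij}\e| = b_j \omega_n (r\eps)^n$; the powers of $\eps$ cancel and the ratio tends to $na_j/(rb_j) = \sigma_j$. I must check the shell-to-ball surface area ratio carefully, since it is the factor of $n$ that survives.

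For the \textbf{lower bound} I would argue that any near-minimizer must concentrate its mass on $B_{ij}\e$: by Lemma~\ref{lm22}, $\|v\|^2_{L_2(G_{ij}\e)}$ is controlled by $\eps^{\gamma-1}$ times energy terms, hence is negligible compared to $\|v\|^2_{L_2(B_{ij}\e)}$ unless the energy blows up (in which case there is nothing to prove); and on $R_{ij}\e$ the Dirichlet condition together with a Poincaré/trace inequality on the annulus (of $\eps$-scale) shows $\|v\|^2_{L_2(R_{ij}\e)} \le C\eps^2 \eta^\eps_{R_{ij}\e}[v]$, again negligible unless the energy is large. So up to lower-order corrections $\|v\|^2_{L_{2,b\e}} \approx b_j\|v\|^2_{L_2(B_{ij}\e)}$, and by a Poincaré inequality on $B_{ij}\e$ one may replace $v$ on $B_{ij}\e$ by its average $t = \langle v\rangle_{B_{ij}\e}$ at the cost of terms controlled by the energy inside $B_{ij}\e$ (which, since $a\e=1$ there, is cheap). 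Then the energy $\eta^\eps_{G_{ij}\e}[v]$ is bounded below by the energy of the radial harmonic interpolant between the boundary values, and a trace estimate relating the value on $\hat C_{ij}\e$ to $\|v\|_{L_2(R_{ij}\e)}$ and the value on $\check C_{ij}\e$ to $t$ closes the circle, yielding $\eta^\eps_{D_{ij}\e}[v] \ge (\sigma_j - o(1)) \|v\|^2_{L_{2,b\e}(D_{ij}\e)}$.

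The main obstacle is the \textbf{lower bound}, specifically making rigorous the claim that the minimizer is forced to be nearly constant on $B_{ij}\e$ and nearly vanishing on $R_{ij}\e$ while simultaneously tracking that the shell energy $a_j\eps^{\gamma+1}\int_{G_{ij}\e}|\nabla v|^2$ cannot be made smaller than the harmonic-interpolant value $\sim \sigma_j b_j|B_{ij}\e|\,|t|^2$ up to $o(1)$: one must handle the coupling between boundary traces on $\check C_{ij}\e$ and $\hat C_{ij}\e$ and the bulk values on $B_{ij}\e$ and $R_{ij}\e$ uniformly in $\eps$, and be careful that the Dirichlet integral over the thin shell — the quantity with the ``wrong'' scaling $\eps^{-\gamma+1}$ compensated by the coefficient — genuinely dominates. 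The standard device is a one-dimensional reduction in the radial variable (as in Lemma~\ref{lm22}'s proof), comparing $v$ restricted to radial segments with an affine/harmonic profile; I would expect to invoke Lemmas~\ref{lm20} and~\ref{lm22} repeatedly to absorb the error terms. The case $n=2$ requires replacing $r^{2-n}$ by $\ln r$ throughout but is otherwise identical.
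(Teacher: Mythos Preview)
Your proposal is correct and the upper-bound half matches the paper's proof almost exactly: both plug a radial test function equal to $1$ on $B_{ij}\e$, harmonic across the shell, and vanishing on $R_{ij}\e$ (the paper's version $\mathbf{v}_{ij}\e$ in (\ref{hat_v}) is slightly more elaborate, with cut-offs chosen so that it actually lies in $\mathrm{dom}(\mathcal{A}^{D,\eps}_{D_{ij}\e})$, i.e.\ satisfies the interface conditions (\ref{sopr}); this extra regularity is not needed for the Rayleigh quotient but is exploited later in Section~\ref{sec5}). The a~priori estimates you list for the lower bound---Friedrichs on $R_{ij}\e$, Poincar\'{e} on $B_{ij}\e$, and Lemma~\ref{lm22} on $G_{ij}\e$---are exactly the paper's (\ref{v_est1})--(\ref{v_est3}), and together with the upper bound they yield (\ref{v_est5}).

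The difference lies in how the lower bound is closed. You propose to bound $\eta\e_{G_{ij}\e}[v]$ from below by the Dirichlet energy of the radial harmonic interpolant between the sphere averages $\langle v\rangle_{\check C_{ij}\e}$ and $\langle v\rangle_{\hat C_{ij}\e}$, then use trace estimates to replace those averages by $t=\langle v\rangle_{B_{ij}\e}$ and $0$. This works (the spherical-harmonic decomposition on the shell makes the first step rigorous, and the trace corrections are $O(\eps)$ once one knows $\eta\e_{D_{ij}\e}[v]=O(\eps^n)$). The paper instead compares the eigenfunction $v_{ij}\e$ directly to the explicit $\mathbf{v}_{ij}\e$ and shows, via the identity preceding (\ref{w_est2}), that $\eta\e_{D_{ij}\e}[v_{ij}\e-\mathbf{v}_{ij}\e]=o(\eps^n)$; this is a cleaner route because it reuses $\mathbf{v}_{ij}\e$ and avoids the separate harmonic-minimizer argument. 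Your approach is more ``hands-on variational'' and buys independence from the operator-domain construction of $\mathbf{v}_{ij}\e$; the paper's buys a single object that serves both as test function here and as building block for the homogenization test functions later.
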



\begin{proof}Let
$v_{ij}\e\in\mathrm{dom}(\mathcal{A}^{D,\eps}_{D_{ij}\e})$ be the
eigenfunction corresponding to $\lambda_1^{D,\eps}(D_{ij}\e)$ and
such that
\begin{gather}
\label{v_mean} \ds\int_{B_{ij}\e}v_{ij}\e(x)dx=|B_{ij}\e|
\end{gather} Instead of calculating $v_{ij}\e$ in the exact form
we construct a convenient approximation $\mathbf{v}_{ij}\e$ for
it.

We introduce in $D_{ij}\e$ the spherical coordinates $(r,\Theta)$,
$r\in [0,r\e+\kappa\eps)$. Let $\varphi:\mathbb{R}\to \mathbb{R}$
be a twice-continuously differentiable function such that
$\varphi(\rho)=1$ as $\rho\leq 1/2$ and $\varphi(\rho)=0$ as
$\rho\geq 1$.

We define the function $\mathbf{v}_{ij}\e$ by the formula (below
we assume that ${3r\e\over 4}<r\e-d\e$ that is true for $\eps$
small enough)
\begin{gather}\label{hat_v}
\mathbf{v}_{ij}\e(r,\Theta)=\begin{cases}
1,& r\in\big[0,{r\e\over 2}\big)\\
 1+\check A_{j}^{\eps} r^{2-n}
\left(1-\varphi\left({|x-x_i\e|-r\e/2\over
r\e/4}\right)\right),&r\in
\big[{r\e\over 2}, r\e-d\e\big)\\
A_j\e r^{2-n}+B_j\e,& r\in\big[r\e-d\e,r\e\big)\\
\hat A_j\e r^{2-n}\varphi\left({|x-x_i\e|-r\e\over
\kappa\eps}\right),& r\in\big[r\e,r\e+\kappa\eps\big)
\end{cases}
\end{gather}
We choose the coefficients $A_j\e,\ \check A_j\e,\ \hat A_j\e,\
 B_j\e$ in such a way that $\mathbf{v}_{ij}\e$ satisfies conditions (\ref{sopr}):
\begin{gather*}
A_j\e={1\over
1-a_j\e}\left[(r\e-d\e)^{2-n}-(r\e)^{2-n}\right]^{-1}\sim{r^{n-1}\eps^{n-1-\gamma}\over
n-2}
\\
\check A_j\e=\hat A_j\e=a_j\e A_j\e,\quad B_j\e=A_j\e
(r\e)^{2-n}(a_j\e-1)
\end{gather*}
It is clear that $\mathbf{v}_{ij}\e\in
\mathrm{dom}(\mathcal{A}_{D_{ij}\e}^{D,\eps})$ and $\mathcal{A}\e
\mathbf{v}_{ij}\e=0\text{ in }D_{ij}\e\setminus\left\{x:\
|x-x_{ij}\e|\in \left[{5r\e\over 8},{3r\e\over
4}\right]\cup\left[r\e+{\kappa\eps\over
2},r\e+\kappa\eps\right]\right\}$.

Direct calculations lead to the following asymptotics as $\eps\to
0$:
\begin{gather}\label{vbf_est1}
\eta_{D_{ij}\e}\e[\mathbf{v}_{ij}\e]\sim  a_j |\partial
{B}_j|\eps^n,\quad
\|\mathbf{v}_{ij}\e\|_{L_{2,b\e}({B_{ij}\e})}^2\sim
b_j|{B}_j|\eps^n\\ \label{vbf_est2}\|\mathcal{A}\e
\mathbf{v}_{ij}\e\|_{L_2(D_{ij}\e)}=O(\eps^n),\quad
\|\mathbf{v}_{ij}\e-1\|_{L_2(B_{ij}\e)}^2+\|\mathbf{v}_{ij}\e\|_{L_2(G_{ij}\e\cup
R_{ij}\e)}^2=o(\eps^n)
\end{gather}

Using the min-max principle we get
\begin{gather}\label{courant}
\lambda^{D,\eps}_{1}(D_{ij}\e)=\ds{\eta\e_{D_{ij}\e}[v_{ij}\e]\over\|v_{ij}\e\|_{L_{2,b\e}
(D_{ij}\e)}^2}\leq
\ds{\eta_{D_{ij}\e}\e[\mathbf{v}_{ij}\e]\over\|\mathbf{v}_{ij}\e
\|_{L_{2,b\e}(D_{ij}\e)}}\sim {a_j |\partial {B}_j|\over
b_j|{B}_j|}={n a_j \over r b_j}=\sigma_j
\end{gather}

One has the following estimates for the eigenfunction $v_{ij}\e$:
\begin{gather}\label{v_est1}\|v_{ij}\e\|^2_{L_2(R_{ij}\e)}\leq C\eps^2
\eta_{R_{ij}\e}\e[v_{ij}\e]\\\label{v_est2}
\|v_{ij}\e-1\|^2_{L_2(B_{ij}\e)}\leq C\eps^2
\eta_{B_{ij}\e}\e[v_{ij}\e]\\\label{v_est3}
\|v_{ij}\e\|^2_{L_2(G_{ij}\e)}\leq C\eps^{\gamma-1}\left\{
\eta\e_{G_{ij}\e}[v_{ij}\e]+\eps^2\eta_{R_{ij}\e}\e[v_{ij}\e]+\|v_{ij}\e\|^2_{L_2(R_{ij})}\right\}
\end{gather}
The first one is the Friedrichs inequality, the second one is the
Poincar\'{e} inequality and the third one follows from Lemma
\ref{lm22}. Furthermore one has the equality
\begin{gather}\label{v_est4}
\eta\e_{D_{ij}\e}[v_{ij}\e]=\lambda^{D,\eps}_{1}(D_{ij}\e)\left(\|v_{ij}\e\|^2_{L_2(R_{ij}\e)}+b_j\|v_{ij}\e\|^2_{L_2(G_{ij}\e)}
+b_j\left(\|v_{ij}\e-1\|^2_{L_2(B_{ij}\e)}+|B_{ij}\e|\right)\right)
\end{gather}
It follows from (\ref{courant})-(\ref{v_est4}) that
\begin{gather}\label{v_est5}
\eta\e_{D_{ij}\e}[v_{ij}\e]=O(\eps^{n}),\quad
\|{v}_{ij}\e-1\|_{L_2(B_{ij}\e)}^2+\|{v}_{ij}\e\|_{L_2(G_{ij}\e\cup
R_{ij}\e)}^2=o(\eps^n)\text{ as }\eps\to 0
\end{gather}
Moreover (\ref{v_mean}), (\ref{v_est5}) imply
\begin{gather}\label{v_est6}
\|{v}_{ij}\e\|^2_{L_{2,b\e}({B_{ij}\e})}\sim b_j|{B}_j|\eps^n
\end{gather}

Now let us estimate the difference
$w_{ij}\e=v_{ij}\e-\mathbf{v}_{ij}\e$. One has
\begin{gather*}
\|w_{ij}\e\|^2_{L_2(D_{ij}\e)}\leq
2\left(\|v_{ij}\e\|^2_{L_2(G_{ij}\e\cup
R_{ij}\e)}+\|\mathbf{v}_{ij}\e\|^2_{L_2(G_{ij}\e\cup
R_{ij}\e)}\right)+2\left(\|v\e_{ij}-1\|^2_{L_2(B_{ij}\e)}+\|1-\mathbf{v}_{ij}\e\|^2_{L_2(B_{ij}\e)}\right)
\end{gather*}
and thus in view of (\ref{vbf_est2}), (\ref{v_est5}) we conclude
that
\begin{gather}\label{w_est1}
\|w_{ij}\e\|^2_{L_2(D_{ij}\e)}=o(\eps^n)
\end{gather}
Furthermore using inequality (\ref{courant}) we get
\begin{gather*}
\eta_{D_{ij}\e}\e[w_{ij}\e]\leq -2(\mathcal{A}\e
\mathbf{v}_{ij}\e,w\e)_{L_{2,b\e}({D_{ij}\e})}+\left({\eta_{D_{ij}\e}\e[\mathbf{v}_{ij}\e]\over
\|\mathbf{v}_{ij}\e\|^2_{L_{2,b\e}({D_{ij}\e})}}\|{v}_{ij}\e\|^2_{L_{2,b\e}({D_{ij}\e})}-
\eta_{D_{ij}\e}\e[\mathbf{v}_{ij}\e]\right)
\end{gather*}
and in view of (\ref{vbf_est1}), (\ref{vbf_est2}),
(\ref{v_est5})-(\ref{w_est1}) we conclude that
\begin{gather}\label{w_est2}
\eta_{D_{ij}\e}\e[w_{ij}\e]=o(\eps^n)
\end{gather}

The statement of the lemma follows directly from (\ref{vbf_est1}),
 (\ref{w_est1}), (\ref{w_est2}).
\end{proof}


\begin{lemma}\label{lm24} $\liml_{\eps\to 0}\lambda_2^{D,\eps}(D_{ij}\e)=\infty$
\end{lemma}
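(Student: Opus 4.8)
The plan is to argue by contradiction: a function of bounded energy on $D_{ij}\e$ must concentrate, in the $L_{2,b\e}$-norm, inside the inner ball $B_{ij}\e$, where it is essentially equal to a single constant; since these "limit constants" span only a one-dimensional space, the second eigenvalue of $\mathcal{A}^{D,\eps}_{D_{ij}\e}$ cannot remain bounded. All the inequalities needed are already available from Lemma~\ref{lm22} and the proof of Lemma~\ref{lm23}.

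Suppose the assertion fails. Then along some sequence $\eps\to 0$ one has $\lambda_2^{D,\eps}(D_{ij}\e)\le M$ for a fixed $M$. Let $u^{(1)}\e$, $u^{(2)}\e$ be eigenfunctions of $\mathcal{A}^{D,\eps}_{D_{ij}\e}$ corresponding to $\lambda_1^{D,\eps}(D_{ij}\e)$ and $\lambda_2^{D,\eps}(D_{ij}\e)$, orthonormal in $L_{2,b\e}(D_{ij}\e)$, so that $\eta\e_{D_{ij}\e}[u^{(k)}\e]=\lambda_k^{D,\eps}(D_{ij}\e)\le M$ for $k=1,2$. \emph{First}, exactly as in the proof of Lemma~\ref{lm23}, the Friedrichs inequality in $R_{ij}\e$ (using $u^{(k)}\e|_{S_{ij}\e}=0$), the Poincar\'e inequality in $B_{ij}\e$ with $c_k\e:=\langle u^{(k)}\e\rangle_{B_{ij}\e}$, and Lemma~\ref{lm22} in $G_{ij}\e$ give $\|u^{(k)}\e\|^2_{L_2(R_{ij}\e)}=O(\eps^2)$, $\|u^{(k)}\e-c_k\e\|^2_{L_2(B_{ij}\e)}=O(\eps^2)$ and $\|u^{(k)}\e\|^2_{L_2(G_{ij}\e)}=O(\eps^{\gamma-1})$. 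Since $\|u^{(k)}\e\|^2_{L_{2,b\e}(D_{ij}\e)}=1$ and $\|u^{(k)}\e\|^2_{L_2(B_{ij}\e)}=|c_k\e|^2|B_{ij}\e|+\|u^{(k)}\e-c_k\e\|^2_{L_2(B_{ij}\e)}$ (the cross term vanishes by definition of $c_k\e$), it follows that $b_j|c_k\e|^2|B_{ij}\e|\to 1$, hence $|c_k\e|^2\asymp\eps^{-n}$; in particular $c_k\e\neq 0$ for small $\eps$.

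\emph{Next}, consider the test function $v\e=c_2\e u^{(1)}\e-c_1\e u^{(2)}\e\in\mathrm{dom}(\mathcal{A}^{D,\eps}_{D_{ij}\e})$. Orthonormality gives $\|v\e\|^2_{L_{2,b\e}(D_{ij}\e)}=|c_1\e|^2+|c_2\e|^2\asymp\eps^{-n}$, and the triangle inequality for the quadratic form gives $\eta\e_{D_{ij}\e}[v\e]\le 2|c_2\e|^2\lambda_1^{D,\eps}(D_{ij}\e)+2|c_1\e|^2\lambda_2^{D,\eps}(D_{ij}\e)=O(\eps^{-n})$. On $B_{ij}\e$ we may write $v\e=c_2\e(u^{(1)}\e-c_1\e)-c_1\e(u^{(2)}\e-c_2\e)$ (the constants cancel), so by the first step $\|v\e\|^2_{L_2(B_{ij}\e)}=O(\eps^{2-n})$. \emph{Finally}, applying the Friedrichs inequality in $R_{ij}\e$ and Lemma~\ref{lm22} in $G_{ij}\e$ to $v\e$ (with $\eta\e_{D_{ij}\e}[v\e]=O(\eps^{-n})$) we get $\|v\e\|^2_{L_2(R_{ij}\e)}=O(\eps^{2-n})$ and $\|v\e\|^2_{L_2(G_{ij}\e)}=O(\eps^{\gamma-1-n})$. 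Since $\gamma>3$, i.e. $\gamma-1-n>2-n$, the latter is $o(\eps^{2-n})$, whence $\|v\e\|^2_{L_{2,b\e}(D_{ij}\e)}=\|v\e\|^2_{L_2(R_{ij}\e)}+b_j\|v\e\|^2_{L_2(G_{ij}\e)}+b_j\|v\e\|^2_{L_2(B_{ij}\e)}=O(\eps^{2-n})=o(\eps^{-n})$, contradicting $\|v\e\|^2_{L_{2,b\e}(D_{ij}\e)}\asymp\eps^{-n}$. Therefore $\liml_{\eps\to 0}\lambda_2^{D,\eps}(D_{ij}\e)=\infty$.

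\textbf{Main obstacle.} The only delicate part is the $\eps$-power bookkeeping: one has to establish the sharp concentration scaling $|c_k\e|^2\asymp\eps^{-n}$ for eigenfunctions of bounded energy and then verify that the thin shell $G_{ij}\e$ contributes a negligible $O(\eps^{\gamma-1-n})$ to $\|v\e\|^2_{L_{2,b\e}(D_{ij}\e)}$; it is exactly at this last point that the hypothesis $\gamma>3$ is used. No estimate beyond Lemma~\ref{lm22} and those already appearing in the proof of Lemma~\ref{lm23} is required.
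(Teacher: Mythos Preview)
Your proof is correct and takes a genuinely different route from the paper. The paper rescales $D_{ij}\e$ to a fixed domain $\mathbf{D}$ of size $O(1)$, so that $\lambda_k^{D,\eps}(D_{ij}\e)=\eps^{-2}\lambda_k^{D,\eps}(\mathbf{D})$, and then invokes a general spectral-convergence theorem (Iosif'yan--Ole\u{\i}nik--Shamaev) to show that $\lambda_k^{D,\eps}(\mathbf{D})$ converges to the $k$-th eigenvalue of the decoupled limit operator $-\Delta^{D,N}_{\mathbf{R}}\oplus(-b_j^{-1}\Delta^N_{\mathbf{B}})$; since the latter has $\lambda_1=0$ and $\lambda_2>0$, the blow-up follows together with the precise rate $\lambda_2^{D,\eps}(D_{ij}\e)\sim\lambda_2\eps^{-2}$. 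Your argument is considerably more elementary: it stays on the $\eps$-dependent domain, uses only the Friedrichs and Poincar\'e inequalities and Lemma~\ref{lm22} (all already exploited in Lemma~\ref{lm23}), and derives the contradiction from the observation that Dirichlet functions of bounded energy concentrate, up to $o(1)$ in the weighted $L_2$-norm, on the one-dimensional space of constants in $B_{ij}\e$, which cannot carry two orthonormal eigenfunctions. The paper's approach buys the exact divergence rate and fits naturally into the homogenization framework used elsewhere; yours is shorter and entirely self-contained. One minor remark: for the final contradiction you only need $\|v\e\|^2_{L_{2,b\e}(D_{ij}\e)}=o(\eps^{-n})$, and for this any $\gamma>1$ would do; the sharper statement that the shell contribution is $o(\eps^{2-n})$ does require $\gamma>3$, but it is not actually needed for the conclusion.
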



\begin{proof}
We denote:
\begin{gather*}
\mathbf{B}\e=\left\{y\in \mathbb{R}^n:\ 0\leq
|y|<r-\eps^{\gamma-1}\right\},\quad \mathbf{B}=\left\{y\in
\mathbb{R}^n:\ 0\leq |y|<r\right\}\\
\mathbf{G}\e=\left\{y\in
\mathbb{R}^n:\ r-\eps^{\gamma-1}<|y|<r\right\}\\
\mathbf{R}=\left\{y\in \mathbb{R}^n:\ r<|y|<r+\kappa\right\},\quad
\mathbf{D}=\left\{y\in \mathbb{R}^n:\ 0\leq|y|<r+\kappa\right\}
\end{gather*}
Also we introduce the functions $\mathbf{a}\e(y)$,
$\mathbf{b}(y)$:
$$\mathbf{a}\e(y)=a\e(y\eps+x_{ij}\e),\quad \mathbf{b}(y)=b\e(y\eps+x_{ij}\e),\quad y\in\mathbf{D}$$
(it is clear that $\mathbf{b}$ in independent of $\eps$).

By ${\mathbf{A}}^{D,\eps}_{\mathbf{D}}$ we denote the operator
acting in $L_{2,\mathbf{b}}(\mathbf{D})$ and being defined by the
operation
\begin{gather*}{\mathbf{A}}^{D,\eps}_{\mathbf{D}}=
-{1\over \mathbf{b}(y)}\suml_{k=1}^n\ds{\partial\over\partial
y_k}\left(\mathbf{a}\e(y){\partial \over\partial y_k}\right)
\end{gather*}
and the definitional domain
$\mathrm{dom}({\mathbf{A}}^{D,\eps}_{\mathbf{D}})$ which consists
of functions $v$ belonging to $H^2(\mathbf{B}\e)$,
$H^2(\mathbf{G}\e)$, $H^2(\mathbf{R})$ and satisfying the
conditions
\begin{gather*}
\begin{cases} (v)^+=(v)^-\text{\quad and\quad } \ds\left({\partial
v\over\partial n}\right)^+=a_j\e\left({\partial v\over\partial
n}\right)^-,
&y\in\partial\mathbf{B}\\
(v)^+=(v)^-\text{\quad and\quad } a_j\e\ds\left({\partial
v\over\partial n}\right)^+=\left({\partial v\over\partial
n}\right)^-, &y\in\partial \mathbf{B}\e\\
v=0,& y\in\partial \mathbf{D}
\end{cases}
\end{gather*}
We denote by ${\lambda}_k^{{D},\eps}(\mathbf{D})$ the $k$-th
eigenvalue of the operator ${\mathbf{A}}^{D,\eps}_{\mathbf{D}}$.
It is clear that
\begin{gather}\label{elambda}
\forall k\in \mathbb{N}:\quad
{\lambda}_k^{{D},\eps}(\mathbf{D})=\eps^{2}
{\lambda}_k^{{D},\eps}(D_{ij}\e)
\end{gather}

Below we will prove that
\begin{gather}\label{conv1}
\forall k\in \mathbb{N}:\quad{\lambda}_k^{{D},\eps}(\mathbf{D})\to
{\lambda}_k
\end{gather}
where ${\lambda}_k$ is the $k$-th eigenvalue of the operator
${\mathbf{A}}$ which acts in the space $L_2(\mathbf{R})\oplus
L_{2,b_j}(\mathbf{B})$ and is defined by the formula
$${\mathbf{A}}=
-\left(\begin{matrix}\Delta_{\mathbf{R}}^{D,N}
&0\\0&b_j^{-1}\Delta_{\mathbf{B}}^N
\end{matrix}\right)$$
Here the operator $\Delta_{\mathbf{R}}^{D,N}$ (\textit{resp.}
$\Delta_{\mathbf{B}}^N$) is defined by the operation $\Delta$ and
the definitional domain consisting of functions $v\in
H^2(\mathbf{R})$ (\textit{resp.} $v\in H^2(\mathbf{B})$)
satisfying the conditions
\begin{gather*}
v|_{\partial \mathbf{D}}=0,\quad \left.{\partial v\over\partial
n}\right|_{\partial \mathbf{R} \setminus\partial
\mathbf{D}}=0\text{\quad (\textit{resp.} }  \left.{\partial
v\over\partial n}\right|_{\partial \mathbf{B}}=0\text{)}
\end{gather*}

It is clear that ${\lambda}_1=0$ (${\lambda}_1$ coincides with the
first eigenvalue of $-b_j^{-1}\Delta_{\mathbf{B}}^N$) while
\begin{gather}\label{lambda2}
{\lambda}_2>0
\end{gather}
(${\lambda}_2$ coincides either with the first eigenvalue of
$-\Delta_{\mathbf{R}}^{D,N}$ or with the second eigenvalue of
$-b_j^{-1}\Delta_{\mathbf{B}}^N$). Then the statement of the lemma
follows directly from (\ref{elambda})-(\ref{lambda2}).

To complete the proof of lemma we have to prove (\ref{conv1}). For
that we use the following

\begin{theorem*}[see \cite{IOS}] Let $\mathcal{H}^\eps, \mathcal{H}^0$ be
separable Hilbert spaces, let $\mathcal{L}\e:\mathcal{H}\e\to
\mathcal{H}\e,\ \mathcal{L}^0:\mathcal{H}^0 \to \mathcal{H}^0$ be
linear continuous operators,
$\mathrm{im}\mathcal{L}^0\subset\mathcal{V}\subset \mathcal{H}^0$,
where $\mathcal{V}$ is a subspace in $\mathcal{H}^0$.

Suppose that the following conditions $C_1-C_4$ hold:

{$C_1.$} The linear bounded operators $R\e:\mathcal{H}^0\to
\mathcal{H}\e$ exist such that $ \|R\e
f\|^2_{\mathcal{H}\e}\underset{\eps\to 0}\to
\varrho\|f\|^2_{\mathcal{H}^0}$ for any $f\in \mathcal{V}$. Here
$\varrho>0$ is a constant.

{$C_2.$} Operators $\mathcal{L}\e, \mathcal{L}^0$ are positive,
compact and self-adjoint. The norms
$\|\mathcal{L}\e\|_{\mathcal{L}(\mathcal{H}\e)}$ are bounded
uniformly in $\eps$.

{$C_3.$} For any $f\in \mathcal{V}$: $\|\mathcal{L}\e R\e
f-R\e\mathcal{L}^0 f\|_{\mathcal{H}\e}\underset{\eps\to 0}\to 0$.

{$C_4.$} For any sequence $f\e\in \mathcal{H\e}$ such that
$\sup\limits_{\eps} \|f\e\|_{\mathcal{H}\e}<\infty$ the
subsequence $\eps^\prime\subset\eps$ and $w\in \mathcal{V}$ exist
such that $ \|\mathcal{L}\e f\e-R\e
w\|_{\mathcal{H}\e}\underset{\eps=\eps^\prime\to 0}\longrightarrow
0$.

Then for any $k\in\mathbb{N}$\ $$\mu_k\e\underset{\eps\to
0}\to\mu_k$$ where $\{\mu_k\e\}_{k=1}^\infty$ and
$\left\{\mu_k\right\}_{k=1}^\infty$ are the eigenvalues of the
operators $\mathcal{L}\e$ and $\mathcal{L}^0$, which are
renumbered in the increasing order and with account of their
multiplicity.
\end{theorem*}

Let us apply this theorem. We set
$\mathcal{H}\e=L_{2,\mathbf{b}}(\mathbf{D})$,
$\mathcal{H}^0=L_2(\mathbf{R})\oplus L_{2,b_j}(\mathbf{B})$,
$\mathcal{L}\e=({\mathbf{A}}_{\mathbf{D}}^{D,\eps}+\mathrm{I})^{-1}$,
$\mathcal{L}^0=(\mathbf{A}+\mathrm{I})^{-1}$,
$\mathcal{V}=\mathcal{H}^0$. We introduce the operator
$R\e:\mathcal{H}^0\to \mathcal{H}\e$ by the formula
\begin{gather*}
[R\e f](y)=\begin{cases}f_\mathbf{R}(y),&y\in \mathbf{R} ,\\
f_\mathbf{B}(y),&y\in \mathbf{B},
\end{cases}\quad
f=(f_\mathbf{R},f_\mathbf{B})\in \mathcal{H}^0
\end{gather*}

Evidently conditions $C_1$ (with $\varrho=1$) and $C_2$ hold. Let
us verify condition $C_3$.

At first we introduce the operator $Q\e:H^1(\mathbf{B}\e)\to
H^1(\mathbb{R}^n)$ by the formula
$$[Q\e v](y)=[Q\tilde{v}\e](k\e y)$$ where $k\e=(r-\eps^{\gamma-1})^{-1}r$,
the function $\tilde{v}\e\in H^1(\mathbf{B})$ is defined by the
formula $\tilde{v}\e(y)=v(y/k\e)$ and $Q:H^1(\mathbf{B})\to
H^1(\mathbb{R}^n)$ is the operator with the following properties:
$$\forall v\in H^1(\mathbf{B}):\quad [Q v](y)=v(y)\text{ for }y\in \mathbf{B},\quad
\|Q v\|_{H^1(\mathbb{R}^n)}\leq C\|v\|_{H^1(\mathbf{B})}$$ (such
an operator exists, see e.g. \cite{Mikhailov}). One has
\begin{gather*}
\forall v\in H^1(\mathbf{B}\e):\quad [Q\e v](y)=v(y)\text{ for
}y\in \mathbf{B}\e
\end{gather*}
Since $k\e\sim 1$ as $\eps\to 0$, then, obviously,
\begin{gather}\label{Q2}
\forall v\in H^1(\mathbf{B}\e):\quad \|Q\e
v\|_{H^1(\mathbb{R}^n)}\leq C_1\|v\|_{H^1(\mathbf{B}\e)}
\end{gather}

Let $f=(f_\mathbf{R},f_\mathbf{B})\in\mathcal{H}^0$. We set
$f\e=R\e f$, $v\e=\mathcal{L}\e f\e$. It is clear that
\begin{gather}\label{est1}
\|v\e\|_{L_{2,\mathbf{b}}(\mathbf{D})}\leq
\|f\e\|_{L_{2,\mathbf{b}}(\mathbf{D})}=\|f\|^2_{\mathcal{H}^0}
\end{gather}
One has the following integral equality:
\begin{gather}
\label{int_ineq_D}
\intl_{\mathbf{D}}\bigg[\mathbf{a}\e(y)\big(\nabla v\e,\nabla
w\e\big)+\mathbf{b}(y)\left(v\e w\e-f\e w\e\right)\bigg]dy=0,\quad
\forall w\e\in \overset{\circ}{H}{}^1(\mathbf{D})
\end{gather}
Substituting into (\ref{int_ineq_D}) $w\e=v\e$ and taking into
account (\ref{est1}) we obtain
\begin{gather}\label{est2}
\intl_{\mathbf{D}}\mathbf{a}\e|\nabla v\e|^2 dy\leq C
\end{gather}

Let $v\e_\mathbf{R}\in H^1(\mathbf{R})$ (\textit{resp.}
$v\e_{\mathbf{B}}\in H^1(\mathbf{B})$) be the restrictions of
$v\e$ onto $\mathbf{R}$ (\textit{resp.} the restrictions of $Q\e
v\e$ onto $\mathbf{B}$). Since $v\e\in \mathrm{dom}(
\mathbf{A}^{D,\eps}_{\mathbf{D}})$ then
$v\e_\mathbf{R}|_{\partial\mathbf{D}}=0$. It follows from
estimates (\ref{Q2}), (\ref{est1}), (\ref{est2}) that the set
$\left\{(v_{\mathbf{R}}\e, v_{\mathbf{B}}\e)\right\}_{\eps}$ is
bounded in $H^1(\mathbf{R})\oplus H^1(\mathbf{B})$ uniformly in
$\eps$. Therefore the set
$\left\{(v_\mathbf{R}\e,v_\mathbf{B}\e)\right\}_\eps$ is weakly
compact in $H^1(\mathbf{R})\oplus H^1(\mathbf{B})$ and in view of
the embedding theorem it is compact in $L_2(\mathbf{R})\oplus
L_2(\mathbf{B})$. Let $\eps^\prime\subset\eps$ be an arbitrary
subsequence for which
\begin{gather}\label{conv_RB}
\begin{array}{l}
v_{\mathbf{R}}\e\underset{\eps=\eps^\prime\to 0}\longrightarrow
v_\mathbf{R}\in H^1(\mathbf{R})\text{ weakly in
}H^1(\mathbf{R})\text{ and strongly in }L_2(\mathbf{R}),\quad
v_\mathbf{R}|_{\partial\mathbf{D}}=0\\
v_{\mathbf{B}}\e\underset{\eps=\eps^\prime\to 0}\longrightarrow
v_\mathbf{B}\in H^1(\mathbf{B})\text{ weakly in
}H^1(\mathbf{B})\text{ and strongly in }L_2(\mathbf{B})
\end{array}
\end{gather}
We will prove that
\begin{gather}\label{vA0}
v=\mathcal{L}^0 f,\text{ where }v=(v_\mathbf{R},v_\mathbf{B})
\end{gather}

We define the function $w\e\in \overset{\circ}{H}{}^1(\mathbf{D})$
by the formula
\begin{gather*}
w\e(x)=\left(w_\mathbf{B}(x)-w_\mathbf{R}(x)\right)\varphi\left({|x-x_{ij}\e|-
(r-\eps^{\gamma-1})\over \eps^{\gamma-1} }\right)+w_\mathbf{R}(x)
\end{gather*}
Here $w_\mathbf{R},w_\mathbf{B}\in C^\infty(\mathbb{R}^n)$ are
arbitrary functions, $\mathrm{supp} (w_\mathbf{R})\subset
\mathbf{D}$, $\varphi:\mathbb{R}\to \mathbb{R}$ be a smooth
function such that $\varphi(\rho)=1$ as $\rho\leq 1/2$ and
$\varphi(\rho)=0$ as $\rho\geq 1$. Substituting $w\e$ into
(\ref{int_ineq_D}) we get
\begin{gather}\label{int_ineq_D1}
\intl_{\mathbf{R}}\bigg[\big(\nabla v_\mathbf{R}\e,\nabla
w_\mathbf{R}\big)+v_\mathbf{R}\e w_\mathbf{R}-f_\mathbf{R}
w_\mathbf{R}\bigg]dy+\intl_{\mathbf{B}}\bigg[\big(\nabla
v_\mathbf{B}\e,\nabla w_\mathbf{B}\big)+ b_j\left(v_\mathbf{B}\e
w_\mathbf{B}-f_\mathbf{B}
w_\mathbf{B}\right)\bigg]dy+\delta(\eps)=0
\end{gather}
where
\begin{gather*}
\delta(\eps)=-\intl_{\mathbf{G}\e}\bigg[\big(\nabla
v_\mathbf{B}\e,\nabla w_\mathbf{B}\big)+b_j\left(v_\mathbf{B}\e
w_\mathbf{B}-f_\mathbf{B}
w_\mathbf{B}\right)\bigg]dy+\intl_{\mathbf{G}\e}\bigg[a_j\e\big(\nabla
v\e,\nabla w\e\big)+b_j\big(v\e w\e-f\e w\e\big)\bigg]dy
\end{gather*}
It is clear that
$$\intl_{\mathbf{G}\e}a_j\e|\nabla w\e|^2dy+\|w\e\|_{L_2(\mathbf{G}\e)}^2\leq
C(\eps^2+\eps^{\gamma-1})$$ and due to (\ref{Q2}), (\ref{est1}),
(\ref{est2}) we get $\delta\e\to 0$ as $\eps\to 0$. Taking into
account (\ref{conv_RB}) we pass to the limit as
$\eps=\eps^\prime\to 0$ in (\ref{int_ineq_D1}) and obtain
\begin{gather*}
\intl_{\mathbf{R}}\bigg[\big(\nabla v_\mathbf{R},\nabla
w_\mathbf{R}\big)+v_\mathbf{R} w_\mathbf{R}-f_\mathbf{R}
w_\mathbf{R}\bigg]dy+\intl_{\mathbf{B}}\bigg[\big(\nabla
v_\mathbf{B},\nabla w_\mathbf{B}\big)+ b_j\left(v_\mathbf{B}
w_\mathbf{B}-f_\mathbf{B} w_\mathbf{B}\right)\bigg]dy=0
\end{gather*}
Hence $-\Delta^{D,N}_{\mathbf{R}} v_R+v_R=f_R$ and
$-b^{-1}_j\Delta^{N}_{\mathbf{B}} v_B+ v_B=f_B$. Therefore
(\ref{vA0}) holds. In view of (\ref{vA0})
$(v_\mathbf{R},v_\mathbf{B})$ is independent of the subsequence
$\eps^\prime$ and thus $(v_\mathbf{R}\e,v_\mathbf{B}\e)$ converges
to $(v_\mathbf{R},v_\mathbf{B})$ as $\eps \to 0$.

Making the substitution $x=y\eps+x_{ij}\e$ in estimate
(\ref{g_ineq}) we get
\begin{gather*}
\|v\e\|^2_{L_2(\mathbf{G}\e)}\leq
C\eps^{\gamma-1}\left\{\eps^{-2}\intl_{\mathbf{G}\e}a_j\e |\nabla
v\e|^2dy +\intl_{\mathbf{R}}|\nabla v\e|^2dy
+\|v\e\|^2_{L_2(\mathbf{R})}\right\}
\end{gather*}
and therefore in view of (\ref{est1}), (\ref{est2}) we obtain
(recall that $\gamma>3$)
\begin{gather}\label{v_ets_G}
\|v\e\|^2_{L_2(\mathbf{G}\e)}\underset{\eps\to 0}\to 0
\end{gather}

Taking into account (\ref{conv_RB}), (\ref{vA0}), (\ref{v_ets_G})
we get
$$\|\mathcal{L}\e R\e f-R\e \mathcal{L}^0 f\|_{\mathcal{H}\e}^2\leq
\|v_\mathbf{R}\e-v_\mathbf{R}\|_{L_2(\mathbf{R})}^2+
\|v_\mathbf{B}\e-v_\mathbf{B}\|_{L_{2,b_j}(\mathbf{B}\e)}^2+
2\left(\|v\e\|_{L_{2}(\mathbf{G}\e)}^2+\|v_\mathbf{B}\|_{L_{2}(\mathbf{G}\e)}^2\right)
\underset{\eps\to 0}\to 0$$ and thus $C_3$ is proved.

Finally let us verify condition $C_4$. Let $\sup\limits_{\eps}
\|f\e\|_{\mathcal{H}\e}<\infty$. We denote $v\e=\mathcal{L}\e
f\e$, it is clear that the set $\{v\e\}_\eps$ is bounded in
$H^1(\mathbf{D})$ uniformly in $\eps$. Then the set
$\left\{(v\e_\mathbf{R},v\e_\mathbf{B})\right\}_\eps$ is bounded
in $H^1(\mathbf{R})\oplus H^1(\mathbf{B})$ uniformly in $\eps$ and
therefore the subsequence $\eps^\prime\subset\eps$ and
$w=(w_\mathbf{R},w_\mathbf{B})\in {H}^1(\mathbf{R})\oplus
{H}^1(\mathbf{B})\subset \mathcal{H}^0$ exist such that
\begin{gather*}
v_{\mathbf{R}}\e\underset{\eps=\eps^\prime\to 0}\longrightarrow
w_\mathbf{R}\text{ weakly in
}H^1(\mathbf{R})\text{ and strongly in }H^1(\mathbf{R})\\
v_{\mathbf{B}}\e\underset{\eps=\eps^\prime\to 0}\longrightarrow
w_\mathbf{B}\text{ weakly in }L_2(\mathbf{B})\text{ and strongly
in }L_2(\mathbf{B})
\end{gather*}
Moreover $v\e$ satisfies (\ref{v_ets_G}), therefore
$\liml_{\eps=\eps^\prime\to 0}\|\mathcal{L}^0 f\e-w\|^2= 0$. $C_4$
is proved.

We have verified the fulfilment of conditions $C_1-C_4$. Thus the
eigenvalues $\mu_k\e$ of the operator $\mathcal{L}\e$ converge to
the eigenvalues $\mu_k$ of the operator $\mathcal{L}^0$ as
$\eps\to 0$. But
$\lambda_k^{D,\eps}(\mathbf{D})=(\mu_k\e)^{-1}-1$, $
\lambda_k=(\mu_k)^{-1}-1$ that implies (\ref{conv1}). The lemma is
proved.
\end{proof}


\section{\label{sec4}Structure of $\sigma(\mathcal{A}^0)$}

In this section we prove equality (\ref{Aspectrum}).

At first we show that
\begin{gather}\label{arg}
\lambda\in
\sigma(\mathcal{A}^0)\setminus\cupl_{j=1}^m\left\{\sigma_j\right\}
\ \Longleftrightarrow\
\lambda\mathcal{F}(\lambda)\in\sigma(\widehat{\mathcal{A}})
\end{gather}
where $\sigma(\widehat{\mathcal{A}})$ is the spectrum of the
operator $\widehat{\mathcal{A}}=-\ds\suml_{k,l=1}^n
\widehat{a}^{kl}{\partial^2 \over\partial x_k\partial x_l }$, the
function $\mathcal{F}(\lambda)$ is defined by (\ref{mu_eq})

Indeed let
$\lambda\in\sigma(\mathcal{A}^0)\setminus\cupl_{j=1}^m\left\{\sigma_j\right\}
$. Then there is nonzero
$F=\left(\begin{matrix}f\\f_1\\\dots\\f_m\end{matrix}\right)\in
L_2(\mathbb{R}^n)\underset{j=\overline{1,m}}\oplus
L_{2,{\rho_j/\sigma_j}}(\mathbb{R}^n)$ such that
\begin{gather}\label{opposite} F\notin
\mathrm{im}(\mathcal{A}^0-\lambda \mathrm{I})
\end{gather}
Let us suppose the opposite, i.e. $ \lambda
\mathcal{F}(\lambda)\notin\sigma(\widehat{\mathcal{A}})$. Then for
any $g\in L_2(\mathbb{R}^n)$ there is $u\in
\mathrm{dom}(\widehat{\mathcal{A}})$ such that
\begin{gather}\label{pencil}
\widehat{\mathcal{A}} u-\lambda\mathcal{F}(\lambda)u=g\end{gather}
We set $\ds g=f+\suml_{j=1}^m{\rho_j f_j\over\sigma_i-\lambda}$.
It follows from (\ref{pencil}) that
$$\mathcal{A}^0U-\lambda U=F,\text{\quad where
}U=\left(\begin{matrix}u\\u_1\\\dots\\u_m\end{matrix}\right),\
u_j=\ds{\sigma_j u+f_j\over\sigma_j-\lambda}\quad
(j={1,\dots,m})$$ We obtain a contradiction with (\ref{opposite}),
hence $\lambda \mathcal{F}(\lambda)\in
\sigma(\widehat{\mathcal{A}})$. Converse assertion in (\ref{arg})
is proved similarly.

It is well-known that $\sigma(\widehat{\mathcal{A}})=[0,\infty)$,
therefore
\begin{gather}\label{0infty}
\lambda\in
\sigma(\mathcal{A}^0)\setminus\cupl_{j=1}^m\left\{\sigma_j\right\}
\text{ iff }\lambda \mathcal{F}(\lambda)\geq 0
\end{gather}

At first we study the function $\lambda\mathcal{F}(\lambda)$ on
$\mathbb{R}$. It is easy to get (see Fig. \ref{fig2}) that there
are the points $\mu_j$, $j=1,\dots,m$ such that
\begin{gather*}
\mathcal{F}(\mu_j)=0,\ j=1,\dots,m-1\\
\sigma_j<\mu_j<\sigma_{j+1},\ j=1,\dots,m-1,\quad
\sigma_m<\mu_m<\infty\\
\left\{\lambda\in\mathbb{R}\setminus\cupl_{j=1}^m\left\{\sigma_j\right\}:\
\lambda\mathcal{F}(\lambda)\geq
0\right\}=[0,\infty)\setminus\left(\cupl_{j=1}^m[\sigma_j,\mu_j)\right)
\end{gather*}

\begin{figure}[t]
\begin{center}
\scalebox{0.45}[0.45]{\includegraphics{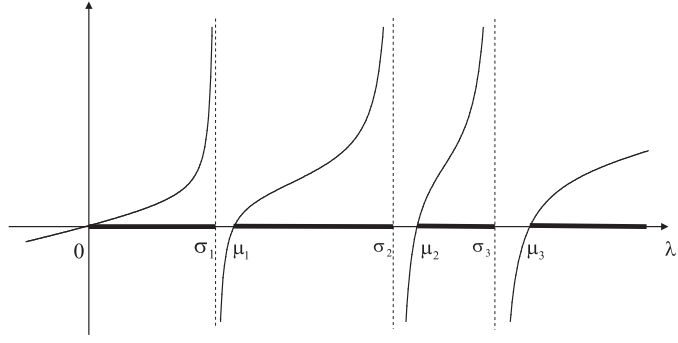}}
\caption{\label{fig2}The graph of the function
$\lambda\mathcal{F}(\lambda)$ (for $m=3$).}
\end{center}
\end{figure}

Let us consider the equation $\lambda\mathcal{F}(\lambda)=a$,
where $a\in [0,\infty)$. One the one hand it is equivalent to the
equation $\left(\prod\limits_{j=1}^m(\sigma_j-\lambda)\right)^{-1}
P_{m+1}(\lambda)=0$, where $P_{m+1}$ is a polynomial of the degree
$m+1$, and therefore in $\mathbb{C}$ this equation at most $m+1$
roots. On the other hand on $[0,\infty)$ the equation
$\lambda\mathcal{F}(\lambda)=a$ has exactly $m+1$ roots (see Fig.
\ref{fig2}). Thus the set $\{\lambda\in
\mathbb{C}:\lambda\mathcal{F}(\lambda)\geq 0\}$ belong to
$[0,\infty)$.

We conclude that $\lambda\in
\sigma(\mathcal{A}^0)\setminus\cupl_{j=1}^m\left\{\sigma_j\right\}$
iff $\lambda\in
[0,\infty)\setminus\left(\cupl_{j=1}^m[\sigma_j,\mu_j)\right)$.
Since $\sigma(\mathcal{A}^0)$ is a closed set then the points
$\sigma_j$, $j=\overline{1,m}$ also belong to
$\sigma(\mathcal{A}^0)$. This completes the proof of equality
(\ref{Aspectrum}).


\section{\label{sec5}Proof of Hausdorff convergence}

This section is a main part of the proof: we show that the set
$\sigma(\mathcal{A}\e)$ converges in the Hausdorff sense to the
set $\sigma(\mathcal{A}^0)$ as $\eps\to 0$, that is the following
conditions (\ref{ah}) and (\ref{bh}) hold:

\begin{gather}\tag{A$_\mathrm{H}$}\label{ah}
\text{if }\lambda\e\in\sigma(\mathcal{A}\e)\text{ and
}\liml_{\eps\to 0}\lambda\e=\lambda\text{ then }\lambda\in
\sigma(\mathcal{A}^0)\\\tag{B$_\mathrm{H}$}\label{bh} \text{for
any }\lambda\in \sigma(\mathcal{A}^0)\text{ there exists
}\lambda\e\in\sigma(\mathcal{A}\e)\text{ such that }\liml_{\eps\to
0}\lambda\e=\lambda
\end{gather}


\subsection{Proof of condition \eqref{ah}} Let
$\lambda\e\in\sigma(\mathcal{A}\e)$, $\liml_{\eps\to
0}\lambda\e=\lambda$. We have to prove that $\lambda\in
\sigma(\mathcal{A}^0)$.

If $\lambda\in\cupl_{j=1}^m\left\{\sigma_j\right\}$ then
(\ref{ah}) holds true since
$\cupl_{j=1}^m\left\{\sigma_j\right\}\subset\sigma(\mathcal{A}^0)$.
Therefore we focus on the case $\lambda\notin
\cupl_{j=1}^m\left\{\sigma_j\right\}$.

We consider the sequence $\eps_N\subset\eps$, where
$\eps_N={1\over N}$, $N=1,2,3$\dots\ For convenience we preserve
the same notation $\eps$ having in mind the sequence $\eps_N$.

Taking into account Remark \ref{rem21} we conclude that there
exists $\theta\e\in \mathbb{T}^n$ such that
$\lambda\e\in\sigma(\mathcal{A}^{\theta\e,\eps}_{{{{Y}}}})$. We
extract a subsequence (still denoted by $\eps$) such that
$\theta\e\underset{\eps\to 0}\to\theta\in \mathbb{T}^n$.

Let $u\e\in \mathrm{dom}(\mathcal{A}^{\theta\e,\eps}_{{{{Y}}}})$
be the eigenfunction corresponding to $\lambda\e$ and such that
\begin{gather}\label{u_estimates}
\|u\e\|_{L_{2,b\e}({{{Y}}})}=1\text{ (and therefore }
\eta\e_{{{{Y}}}}[u\e]=\lambda\e)
\end{gather}

We introduce the operator $\Pi\e: H^1(F_{Y}\e)\to H^1({{{Y}}})$
such that for each $u \in H^1(F_{Y}\e)$:
\begin{gather}\begin{matrix}
\Pi\e u(x)=u(x)\ \text{ for }\ x\in F_{{{Y}}}\e\\
\|\Pi\e u\|_{H^1({{{Y}}})}\leq C\|u\|_{H^1(F_{{Y}}\e)}\end{matrix}
\end{gather}
It is known (see e.g. \cite{CioSJP,March}) that such an operator
exists.

Also we introduce the operators $\Pi_j\e:
L_2(\cup_{i\in\I}B_{ij}\e)\to L_2({{{Y}}})$ ($j=1,\dots,m$) by the
formula
\begin{gather*}
i\in\I,\ x\in{{{Y}}}_i\e:\ \Pi_j\e u(x)=\langle u
\rangle_{B_{ij}\e}
\end{gather*}
(recall that
$\overline{{{{Y}}}}=\cupl_{i\in\I}\overline{{{{Y}}}_i\e}$). Using
the Cauchy inequality we obtain
\begin{gather}\label{pij}
\|\Pi_j\e u\|_{L_2({{{Y}}})}\leq
C\|u\|_{L_2(\cup_{i\in\I}B_{ij}\e)}
\end{gather}

It follows from (\ref{u_estimates})-(\ref{pij}) and the embedding
theorem that a subsequence (still denoted by $\eps$), $u\in
H^1({{{Y}}})$ and $u_j\in L_2({{{Y}}})$ ($j=1,\dots,m$) exist such
that
\begin{gather*}
\Pi\e u\e\underset{\eps\to 0}\to u\text{ weakly in
}H^1({{{Y}}})\text{ and strongly in }L_2({{{Y}}})\\
\Pi_j\e u\e\underset{\eps\to 0}\to u_j\text{ weakly in
}L_2({{{Y}}})\quad (j=1,\dots,m)
\end{gather*}
Moreover due to the trace theorem
\begin{gather}\label{pi_trace_conv}
\Pi\e u\e\underset{\eps\to 0}\to u\text{ strongly in
}L_2(\partial{{{Y}}})
\end{gather}
and therefore $u$ belong to $H^1_\theta({{{Y}}})$, i.e.
\begin{gather}\label{u_cond}
\forall k=\overline{1,n}:\quad u(x+e_k)=\theta_k u(x),\text{ for }
x=\underset{^{\overset{\qquad\quad\uparrow}{\qquad\quad k\text{-th
place}}\qquad }}{(x_1,x_2,\dots,0,\dots,x_n)}
\end{gather}

We denote by $\widehat{\mathcal{A}}_{Y}^\theta$ the operator which
is defined by the operation $\widehat{\mathcal{A}}_{Y}^\theta
u=-\ds\suml_{k,l=1}^n \widehat{a}^{kl}{\partial^2 u\over\partial
x_k\partial x_l}$ and the definitional domain
$\mathrm{dom}(\widehat{\mathcal{A}}_{Y}^\theta)$ consisting of
functions belonging to $H^2({Y})$ and satisfying $\theta$-periodic
boundary conditions, i.e.
\begin{gather*}
\forall k=\overline{1,n}:\quad
\begin{cases}
u(x+e_k)=\theta_k u(x),\\\ds \suml_{l=1}^n \widehat{a}^{k
l}{\partial u\over\partial x_l}(x+e_k)=\theta_k \suml_{l=1}^n
\widehat{a}^{kl}{\partial u\over\partial x_l}(x),
\end{cases}\text{ for }x=\underset{^{\overset{\qquad\quad\uparrow}{\qquad\quad k\text{-th
place}}\qquad }}{(x_1,x_2,\dots,0,\dots,x_n)}
\end{gather*}
It is clear that
$\sigma(\widehat{\mathcal{A}}_{Y}^\theta)\subset[0,\infty)$.

\begin{lemma}\label{th51}
One has
\begin{gather}\label{lambdaA1}
u\in \mathrm{dom}(\widehat{\mathcal{A}}_{Y}^\theta)\text{ and }
\widehat{\mathcal{A}}_{Y}^\theta u=\lambda \mathcal{F}(\lambda)u
\end{gather}
\end{lemma}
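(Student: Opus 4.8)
The plan is to pass to the limit $\eps\to 0$ in the integral (variational) identity satisfied by the eigenfunction $u\e$, using a carefully chosen family of test functions, and to read off both the membership $u\in\mathrm{dom}(\widehat{\mathcal A}_Y^\theta)$ and the pencil equation $\widehat{\mathcal A}_Y^\theta u=\lambda\mathcal F(\lambda)u$ from the resulting limit relation. Recall that $u\e$ satisfies
\begin{gather*}
\intl_{{{Y}}}a\e(\nabla u\e,\nabla\overline{w})\,dx=\lambda\e\intl_{{{Y}}}u\e\overline{w}\,b\e\,dx,\qquad\forall w\in H^{1}_{\theta\e}({{Y}}).
\end{gather*}
The first task is to identify the limits of the three types of terms: the "matrix" part on $F_{{{Y}}}\e$, the coupling part on the shells $G_{ij}\e$, and the "slow" masses on the balls $B_{ij}\e$. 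For the $F\e$-part I would use the standard oscillating test functions $w\e(x)=w(x)-\eps\suml_{k}v_k(x/\eps)\partial_k w(x)$ (cut off near the holes), where $v_k$ solves the cell problem \eqref{cell}; this is the classical homogenization argument and it produces $\widehat{a}^{kl}\partial^2_{kl}u$ together with the $\theta$-periodic transmission conditions, hence $u\in\mathrm{dom}(\widehat{\mathcal A}_Y^\theta)$, provided one also shows the gradients on $G\e\cup B\e$ do not contribute — which follows from Lemma~\ref{lm22} and the energy bound $\eta\e_{{{Y}}}[u\e]=\lambda\e$ together with $\gamma>3$, so that $a\e|\nabla u\e|^2$ concentrates on $F\e$ in the limit.

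The heart of the matter is the coupling through the shells. On each $G_{ij}\e$ the flux is governed by the same radial profile as the approximate eigenfunction $\mathbf v_{ij}\e$ of Lemma~\ref{lm23}: integrating the equation over $G_{ij}\e$ and using $a_j\e=a_j\eps^{\gamma+1}$, $d\e=\eps^\gamma$, one gets that the total flux through the inner sphere of $G_{ij}\e$ equals (to leading order) $n a_j\eps^{n-2}(\langle u\e\rangle_{\hat C_{ij}\e}-\langle u\e\rangle_{\check C_{ij}\e})$ per cell. Combined with the conservation law on $B_{ij}\e$ (integrate the equation there: the net flux into $B_{ij}\e$ balances $\lambda\e b_j\langle u\e\rangle_{B_{ij}\e}|B_{ij}\e|$), this yields the discrete relation $\sigma_j(u_j-u)=\lambda u_j$ in the limit, i.e. $u_j=\dfrac{\sigma_j u}{\sigma_j-\lambda}$ (legitimate since $\lambda\notin\{\sigma_j\}$). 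Here I would invoke Lemma~\ref{lm21} to replace the various surface and volume averages of $u\e$ by $u$ (and by $u_j$ on the balls), and Lemmas~\ref{lm23}–\ref{lm24} implicitly to know that $u\e$ on $B_{ij}\e$ is asymptotically constant on each ball. Substituting $u_j$ back into the limit of the right-hand side $\lambda\e\intl u\e\overline w b\e$, whose ball contributions are $\lambda\suml_j\dfrac{\rho_j}{\sigma_j}\,b_j\langle u\e\rangle_{B_{ij}\e}\cdots$ — more precisely $\lambda\suml_j\dfrac{\rho_j}{\sigma_j-\lambda}\,u$ after using the definitions \eqref{sigmarho} of $\rho_j$ — gives $\widehat{\mathcal A}_Y^\theta u=\lambda\bigl(1+\suml_j\frac{\rho_j}{\sigma_j-\lambda}\bigr)u=\lambda\mathcal F(\lambda)u$, which is exactly \eqref{lambdaA1}.

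The main obstacle I anticipate is the bookkeeping of the shell-coupling term: one must show rigorously that the weak limit of the fluxes $\intl_{G_{ij}\e}a_j\e(\nabla u\e,\nabla\overline{w\e})$, summed over the $\sim\eps^{-n}$ cells, converges to $\suml_j\sigma_j b_j(u-u_j)\overline w$ in $L_1({{Y}})$, and that the "cross terms" between the test function's fast part and $u\e$'s behaviour on the shells vanish. This requires a good test function $w\e$ that interpolates between the value $w(x)$ outside $D_{ij}\e$ and the appropriate constant inside $B_{ij}\e$ through the shell with the correct radial profile (mimicking \eqref{hat_v}), controlling $\intl_{G_{ij}\e}a_j\e|\nabla w\e|^2$ to be $O(\eps^n)$ per cell; the estimates of Lemmas~\ref{lm20}–\ref{lm22} are exactly what is needed, but assembling them with the weak convergences $\Pi\e u\e\rightharpoonup u$, $\Pi_j\e u\e\rightharpoonup u_j$ is where the care lies. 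Once the limit identity holds against all smooth $\theta$-periodic $w$ and against $w$ supported near a single cluster of shells, elliptic regularity upgrades it to \eqref{lambdaA1}.
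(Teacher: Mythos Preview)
Your plan is essentially the paper's: pass to the limit in the variational identity for $u\e$ against a composite test function built from (i) the homogenization correctors $v_k$ (to produce $\widehat a^{kl}\partial^2_{kl}$) and (ii) the radial shell profile $\mathbf v_{ij}\e$ of Lemma~\ref{lm23} (to produce the coupling $\sigma_j(u_j-u)=\lambda u_j$), then eliminate $u_j$ to obtain $\widehat{\mathcal A}_Y^\theta u=\lambda\mathcal F(\lambda)u$. The paper organizes this a bit differently---it writes a single test function $w\e=g\e+h\e$ with a second-order Taylor expansion of $g$ frozen on each cell and a partition of unity $\psi_i\e$, integrates by parts so that $\mathcal A\e$ acts on $w\e$ (whose structure is explicit), and reads off one limit identity \eqref{int_result} from which both $u_k=\sigma_k u/(\sigma_k-\lambda)$ and \eqref{u} follow---but your flux-balance heuristic for the shell contribution is equivalent to the paper's $h\e$-computation.

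One technical point you have not addressed and which the paper does handle: the admissible test functions lie in $H^1_{\overline{\theta\e}}(Y)$, not $H^1_{\overline\theta}(Y)$, while your $g,h_j$ (and hence $w\e$) are built to be $\overline\theta$-periodic. The paper fixes this mismatch by multiplying $w\e$ by an explicit scalar function $\mathbf 1\e$ with $\mathbf 1\e\to 1$ in $C^1(\overline Y)$ that converts $\overline\theta$-periodicity to $\overline{\theta\e}$-periodicity, and then shows the resulting extra terms vanish in the limit. Without this correction your test function is not admissible in the identity you start from, so you should incorporate an analogous device.
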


\begin{proof}
One has the following integral equality:
\begin{gather}\label{int_ineq}
\intl_{{{{Y}}}}\bigg(a\e(x)\big(\nabla u\e(x),\nabla
\mathbf{w}\e(x)\big)-\lambda\e b\e(x) u\e(x)
\mathbf{w}\e(x)\bigg)dx=0,\quad\forall \mathbf{w}\e\in
H^1_{\overline{\theta\e}}({{{Y}}})
\end{gather}
In order to prove (\ref{lambdaA1}) we plug into (\ref{int_ineq}) a
function $\mathbf{w}\e$ of special type and then pass to the limit
as $\eps\to 0$.

We introduce some additional notations. Let $v_k\in C^2(F)$
($k=1,\dots,n$) be a function that solves the problem (\ref{cell})
in $F$. We denote by $\widehat v_k$ the function that belongs to
$C^2(Y)$ and coincides with $v_k$ in $F$ (such a function exists,
see e.g. \cite{Mikhailov}). Then we extend $\widehat v_k$ by
periodicity to the whole $\mathbb{R}^n$ preserving the same
notation for the extended function. Using a standard regularity
theory one can easily prove that $\widehat v_k\in
C^2(\mathbb{R}^n)$. We set
$$v_k^{\eps}(x)=\eps \widehat v_k(x\eps^{-1})$$

Let $\mathbf{v}_{ij}\e\in C^{2,\eps}(D_{ij}\e)$ ($i\in
\mathbb{Z}^n,\ j=1,\dots,m$) be the function which is defined in
$D_{ij}\e$ by formula (\ref{hat_v}), $\mathrm{supp}
(\mathbf{v}_{ij}\e)\subset D_{ij}\e$. We redefine it by zero in
$\mathbb{R}^n\setminus D_{ij}\e$. Recall that $\mathbf{v}_{ij}\e$
was constructed in Lemma \ref{lm23} as an approximation for the
eigenfunction ${v}_{ij}\e$ of the operator
$\mathcal{A}_{D_{ij}\e}^{D,\eps}$ which corresponds to the first
eigenvalue $\lambda_1^{D,\eps}(D_{ij}\e)$ and satisfies
(\ref{v_mean}).

Let $\varphi:\mathbb{R}\to \mathbb{R}$ be a twice-continuously
differentiable function such that $\varphi(\rho)=1$ as $\rho\leq
1/2$ and $\varphi(\rho)=0$ as $\rho\geq 1$. We set
$$\phi_i\e(x)=1-\suml_{j=1}^m\phi\left({|x-x_{ij}\e|-(r\e-d\e)\over d\e}\right),\ x\in \mathbb{R}^n$$
Its clear that
\begin{gather}\label{phi_properties}\begin{matrix}
\phi\e_i(x)=0\text{ for }x\in \cupl_{j=1}^m B_{ij}\e,\quad
\phi\e_i(x)=1\text{ for }x\in
\mathbb{R}^n\setminus\left(\cupl_{j=1}^m(\overline{G_{ij}\e\cup
B_{ij}\e})\right)\\ |D^\a \phi_i\e|< C\eps^{-\a\gamma}\
(|\a|=0,1,2,\dots)\text{ in }\cupl_{j=1}^mG_{ij}\e\end{matrix}
\end{gather}

We cover $\mathbb{R}^n$ by the cubes
$$\widetilde{{{{Y}}}}_i^{\eps}=\left\{x\in \mathbb{R}^n: i\eps<x_k<(i+1)\eps+\eps^{3/2}\right\},\ i\in \mathbb{Z}^n$$
Let $\left\{\psi_i\e(x)\right\}_{i\in \mathbb{Z}^n}$ be a
partition of unity associated with this covering, that is
$$\psi_i\e(x)\in C^2(\mathbb{R}^n),\quad 0\leq \psi\e_i(x)\leq 1,\quad
\suml_{i\in \mathbb{Z}^n}\psi\e_i(x)=1,\quad \psi_i\e(x)=1\text{
if }x\in \widetilde{{{{Y}}}}_i^{\eps}\setminus\cupl_{l\not=
i}\widetilde{{{{Y}}}}_l^{\eps},\quad \psi_i\e(x)=0\text{ if
}x\notin\widetilde{{{{Y}}}}_i^{\eps}$$ Moreover, analyzing a
standard procedure of the construction of the partition of unity
(see e.g. \cite{Nara}) we can easily construct the partition of
unity satisfying the following additional conditions
\begin{gather}\label{psi_per}
\forall i\in \mathbb{Z}^n,\forall x\in \mathbb{R}^n:\
\psi_{i}\e(x)=\psi\e_{0}(x+i\eps)\\\label{psi_der} D^\a
\psi\e_0(x)<\ds{C\eps^{-{3\a/ 2}}}\ (\a=0,1,2)\text{ for }x\in
\widetilde{Y}_0\cap\left(\cupl_{l\not=0}\widetilde{Y}_l\right)
\end{gather}

We consider the function $w\e$ of the following form:
\begin{gather}\label{we}
w\e(x)=\suml_{i\in
\mathbb{Z}^n}\psi\e_i(x)\left(g\e_{i}(x)+h\e_{i}(x)\right)
\end{gather}
where
\begin{gather}\notag
\ds g_i\e(x)=g(x^{i,\eps})+\phi_i\e(x)\left(\suml_{k=1}^n
{\partial g\over
\partial x_k}(x^{i,\eps})\big(x_k-x_k^{i,\eps}-v_k\e(x)\big)+\right.
\\\label{we1}\ds\quad\quad\left.+{1\over 2}\suml_{k,l=1}^n {\partial^2 g\over
\partial x_k\partial
x_l}(x^{i,\eps})\big(x_k-x_k^{i,\eps}-v_k\e(x)\big)\big(x_l-x_{l}^{i,\eps}-v_l\e(x)\big)\right)
\\\label{we2}
h_i\e(x)=\suml_{j=1}^m
\big(h_j(x_{ij}\e)-g(x_{ij}\e)\big)\mathbf{v}_{ij}\e
\end{gather}
Here $x^{i,\eps}$ is the center of ${Y}_i\e$, $g(x)$, $h_j(x)$ are
arbitrary functions from $C^2(\mathbb{R}^n)$ satisfying
\begin{gather}\label{w_per}
\forall x\in \mathbb{R}^n,\ \forall i=(i_1,\dots,i_n)\in
\mathbb{Z}^n:\
\begin{cases} g(x+i)=\overline{\theta_1^{i_1}}\cdot\ldots\cdot
\overline{\theta_n^{i_n}} g(x)\\
h_j(x+i)=\overline{\theta_1^{i_1}}\cdot\ldots\cdot
\overline{\theta_n^{i_n}} h_j(x),\ j=1,\dots,m
\end{cases}
\end{gather}
Remark that $\ds{\partial g_i\e\over\partial n}=0$ on $\partial
G_{ij}\e$. Taking this into account we conclude that ${w}\e(x)$
belongs to $C^{2,\eps}(\mathbb{R}^n)$
 and in view of (\ref{psi_per}), (\ref{w_per}) and the
periodicity of $v^{\eps}_k$ we get
$$\forall x\in \mathbb{R}^n,\ \forall i\in \mathbb{Z}^n:\
w\e(x+i)=\overline{\theta_1^{i_1}}\cdot\ldots\cdot
\overline{\theta_n^{i_n} }w\e(x)$$ We also introduce the notations
$$g\e=\suml_{i\in
\mathbb{Z}^n}\psi\e_i(x)g\e_{i}(x),\quad h\e=\suml_{i\in
\mathbb{Z}^n}\psi\e_i(x)h\e_{i}(x)$$

The function $w\e$ belong to $H^1_{\overline{\theta}}({{{Y}}})$.
In order to obtain the function from
$H^1_{\overline{\theta\e}}({{{Y}}})$ we modify $w\e$ multiplying
it by the function which is very close to $1$ in ${Y}$ as $\eps\to
0$. Namely, we define the function $\mathbf{1}\e\in
C^\infty(\mathbb{R}^n)$ by the following recurrent formulae:
\begin{gather*}
\mathbf{1}\e(x_1,\dots,x_{n})=A_n(x_1,\dots,x_{n-1}) x_{n}+B_n(x_1,\dots,x_{n-1}),\\
\a=2,\dots,n:\
\begin{cases}B_\a(x_1,\dots,x_{\a-1})=A_{\a-1}(x_1,\dots,x_{\a-2})
x_{\a-1}+B_{\a-1}(x_1,\dots,x_{\a-2}),\\
A_\a(x_1,\dots,x_{\a-1})=\big({\overline{\theta_\a\e}}/{{\overline{\theta_\a}}}-1\big)B_\a(x_1,\dots,x_{\a-1}),\end{cases}\\
B_1=1,\ A_1={\overline{\theta_1\e}}/{\overline{\theta_1}}-1.
\end{gather*}
It is easy to see that
\begin{gather}\label{1e_1}\begin{matrix}
\maxl_{x\in\overline{{{{Y}}}}}\left|\mathbf{1}\e(x)-1\right|+\maxl_{x\in\overline{{{{Y}}}}}\left|\nabla
\mathbf{1}\e(x)\right|\underset{\eps\to 0}\to 0\\
\mathbf{1}^\eps\in
H^1_{\overline{\theta\e}/\overline{\theta}}({{{Y}}})\text{, where
}\overline{\theta\e}/\overline{\theta}:=(\overline{\theta_1\e}/\overline{\theta_1},
\dots,\overline{\theta_n\e}/\overline{\theta_n}) \end{matrix}
\end{gather}

Finally we set
\begin{gather*}
\mathbf{w}\e(x)=w\e(x)+(\mathbf{1}\e(x)-1) w\e(x)
\end{gather*}
It is clear that $\mathbf{w}\e\in
H^1_{\overline{\theta\e}}({{{Y}}})$.

Substituting $\mathbf{w}\e$ into (\ref{int_ineq}) and integrating
by parts we obtain
\begin{multline}\label{int_ineq+}
\intl_{{{{Y}}}}\bigg(u\e\mathcal{A}\e w\e-\lambda\e  u\e
w\e\bigg)b\e dx+\\+\intl_{\partial{{{Y}}}}u\e{\partial
w\e\over\partial n} ds+\intl_{{{{Y}}}}{\bigg(a\e\big(\nabla
u\e,\nabla ((\mathbf{1}\e-1)w\e)\big)-\lambda\e b\e
u\e(\mathbf{1}\e-1)w\e \bigg)dx}=0
\end{multline}

Further we will prove that the second and the third integrals in
(\ref{int_ineq+}) tend to zero as $\eps\to 0$. Now we focus on the
first integral in (\ref{int_ineq+}). Using Lemma \ref{lm22} and
taking into account (\ref{u_estimates}), (\ref{phi_properties}) we
obtain the estimates
\begin{gather}\label{star2}
\|u\e\|^2_{L_2(G_{Y}\e)}\leq
C\eps^{\gamma-1}\suml_{i\in\I}\left(\eta_{G_{ij}\e}\e[u\e]+\eps^2\eta_{R_{ij}\e}\e[u\e]+\|u\e\|^2_{L_2(R_{ij}\e)}\right)\leq
C_1\eps^{\gamma-1}
\\\label{star3}
\|\mathcal{A}\e g\e\|^2_{L_2(G_{Y}\e)}\leq C\eps^{3-\gamma}
\end{gather}
Since $\mathcal{\mathcal{A}}\e h\e=0$ in $G_{ij}\e$ then in view
of (\ref{equiv}), (\ref{star2}), (\ref{star3})
\begin{gather}\label{vanish_G}
\left|(u\e,\mathcal{\mathcal{A}}\e
w\e)_{L_{2,b\e}(G_{Y}\e)}\right|\leq C\eps\underset{\eps\to 0}\to
0
\end{gather}
Similarly we obtain
\begin{gather}\label{vanish_G1}
\liml_{\eps\to 0}(u\e,w\e)_{L_{2,b\e}(G\e_{{{Y}}})}=0
\end{gather}

We denote $$\widetilde F_i\e=\left\{x\in F_i\e: i
\eps+\eps^{3/2}<x_k<(i+1)\eps\right\},\  \widetilde
F_{Y}\e=\cupl_{i\in\I}\widetilde F_i\e$$ It is clear that
$\widetilde F_{i}\e=F_i\e\setminus\left(\cupl_{l\not= i}\widetilde
{Y}_l\e\right)$.

Firstly we estimate $g\e$ in $F_{Y}\e\setminus \widetilde
F_{Y}\e$. We represent $g\e$ in $F_{Y}\e\setminus \widetilde
F_{Y}\e$ in the form
\begin{multline}\label{repres_T}
g\e(x)=\suml_{i\in
\mathbb{Z}^n}\psi_i(x)\left[\left(g^{i,\eps}+\suml_{k=1}^n
g_{,k}^{i,\eps}\big(x_k-x^{i,\eps}_k\big)+{1\over
2}\suml_{k,l=1}^n
g^{i,\eps}_{,kl}\big(x_k-x^{i,\eps}_k\big)\big(x_l-x^{i,\eps}_l\big)-g(x)\right)-\right.\\\left.
-\suml_{k=1}^n \left( g^{i,\eps}_{,k}+\suml_{l=1}^n
g^{i,\eps}_{,kl}\big(x_l-x^{i,\eps}_l\big)-g_{,k}(x)\right)v_k\e(x)+{1\over
2}\suml_{k,l=1}
(g^{i,\eps}_{,kl}-g_{,kl}(x))v_k\e(x)v_l\e(x)\right]+\\+g(x)-\suml_{k=1}^n
g_{,k}(x)v_k\e(x)+ {1\over 2}\suml_{k,l=1}^n
g_{,kl}(x)v_k\e(x)v_l\e(x)\end{multline} Here
$g^{i,\eps}=g(x^{i,\eps})$, $\ds g_{,k}(x)={\partial
g\over\partial x_k}(x)$, $g_{,k}^{i,\eps}=g\e_{,k}(x^{i,\eps})$,
$\ds g_{,kl}(x)={\partial^2 g\over\partial x_k\partial x_l}(x)$,
$g_{,kl}^{i,\eps}=g\e_{,kl}(x^{i,\eps})$. It follows from
(\ref{psi_der}), (\ref{repres_T}) that $|\Delta g\e(x)|< C$ for
$x\in F_{Y}\e\setminus \widetilde F_{Y}\e$. Since
$\dist\left(\cupl_{j=1}^m D_{ij}\e,\partial
Y_i\e\right)\geq\kappa\eps$ then $h\e=0$ in $F_{Y}\e\setminus
\widetilde F_{Y}\e$ when $\eps$ is small enough and therefore
\begin{gather}\label{vanish_T}
\left|(u\e,\mathcal{\mathcal{A}}\e
w\e)_{L_{2,b\e}(F_{Y}\e\setminus \widetilde F_{Y}\e)}\right|\leq
C\|\Delta g\e\|_{L_2(F_{Y}\e\setminus \widetilde F_{Y}\e)}\leq
C_1\sqrt{|F_{Y}\e\setminus \widetilde F_{Y}\e|}\leq C_2\eps^{1/4}
\end{gather}
Similarly we obtain
\begin{gather}\label{vanish_T1}
\liml_{\eps\to 0}(u\e,w\e)_{L_{2,b\e}(F_{Y}\e\setminus \widetilde
F_{Y}\e)}=0
\end{gather}

Let us study $g\e$ and $h\e$ in $\widetilde F\e$. It is clear that
\begin{gather}\label{delta}
\Delta g\e=\suml_{k,l=1}^n g^{i,\eps}_{,kl}
\big(\nabla(x_k-v\e_k), \nabla(x_l-v\e_l)\big)\text{ for }x\in
\widetilde{{F}}_i\e
\end{gather}
In view of Lemma \ref{lm20} and the Poincar\'{e} inequality one
has the following estimate:
\begin{gather}\label{delta1}
\left\|u\e-\langle u\e\rangle_{\widetilde
F_i\e}\right\|^2_{L_2(\widetilde F_i\e)}+\eps^n\left|\langle \Pi\e
u\e\rangle_{{Y}_i\e}-\langle u\e\rangle_{\widetilde
F_i\e}\right|^2+\left\|\Pi\e u\e-\langle \Pi\e u\e
\rangle_{{Y}_i\e}\right\|_{L_2({{Y}_i\e})}^2\leq
C\eps^2\left\|\nabla\Pi\e u\e\right\|_{L_2({Y}_i\e)}^2
\end{gather}
Using (\ref{delta}), (\ref{delta1}) and the Poincar\'{e}
inequality we get
\begin{multline}\label{vanish_F}
(u\e,\mathcal{A\e}w\e)_{L_{2,b\e}(\widetilde
F_{Y}\e)}=-\suml_{k,l=1}^n
\left[\left(\intl_{F}\big(\nabla(x_k-v_k),
\nabla(x_l-v_l)\big)dx\right)\eps^n \suml_{i\in\I}g^{i,\eps}_{,kl}
\langle \Pi\e u\e\rangle_{{Y}_i\e}\right]+o(1)=
\\=-\suml_{k,l=1}^n \widehat{a}^{kl}|F|\intl_{{{{Y}}}}u\e
{\partial^2 g\over\partial x_k\partial
x_l}dx+o(1) \underset{\eps\to 0}\to -\suml_{k,l=1}^n
\widehat{a}^{kl}|F|\intl_{{{{Y}}}} u {\partial^2 g\over\partial
x_k\partial x_l}dx
\end{multline}
In the same way using Lemma \ref{lm21} (for $v\e=\Pi\e u\e$) we
obtain
\begin{gather}\label{vanish_F1}
(u\e,g\e)_{L_{2,b\e}(\widetilde
F\e_{{{Y}}})}=\suml_{i\in\I}g(x^{i,\eps})\langle u\e
\rangle_{F_i\e}|F|\eps^n+o(1)\underset{\eps\to 0}\to
{|F|}\intl_{{Y}}u(x)g(x)dx
\end{gather}
(here we also use the estimate $\eps^n\|\langle u\e
\rangle_{\widetilde{F}_i\e}-\langle u\e \rangle_{F_i\e}\|^2\leq
C\eps^2\|\nabla\Pi\e u\e\|^2_{L_2(Y_i\e)}$ which follows from
Lemma \ref{lm20}).

Let us study $h\e$ in $\widetilde F_{{{Y}}}\e$. Integrating by
parts and taking into account the form of the function
$\mathbf{v}_{ij}\e$ (in particular, we have the estimate
$\|\mathcal{A}\e \mathbf{v}_{ij}\e\|_{L_2(Y_i\e)}<C\eps^n$), the
Poincar\'{e} inequality and Lemma \ref{lm21} we obtain
\begin{multline}\label{vanish_F2}
(u\e,\mathcal{A\e}h\e)_{L_{2,b\e}(\widetilde
F\e_{{{Y}}})}=(u\e,\mathcal{A\e}h\e)_{L_{2,b\e}(F\e_{{{Y}}})}=
\suml_{j=1}^m\suml_{i\in\I}\langle u\e\rangle_{F_{i}\e}\intl_{\hat
C_{ij}\e}{\partial \mathbf{v}_{ij}\e\over\partial
r}\left(h_j(x_{ij}\e)-g(x_{ij}\e)\right)
ds+\\+\suml_{i\in\I}\left(u\e-\langle
u\e\rangle_{F_{i}\e},\mathcal{A}\e
h_i\e\right)_{L_{2,b\e}(F_i\e)}=\suml_{j=1}^m a_j\left|\partial
{B}_j\right|\suml_{i\in\I}\langle
u\e\rangle_{F_{i}\e}\left(g(x_{ij}\e)-h_j(x_{ij}\e)\right)
\eps^n+o(1)\underset{\eps\to 0}\to\\\underset{\eps\to 0}\to
\suml_{j=1}^m a_j\left|\partial {B}_j\right|\intl_{{{{Y}}}}
u(x)\left(g(x)-h_j(x)\right)dx
\end{multline}
(here $r=|x-x_{ij}\e|$). In the same way we get
\begin{gather}\label{vanish_F3}
\liml_{\eps\to 0}(u\e,h\e)_{L_{2,b\e}(\widetilde F\e_{{{Y}}})}=0
\end{gather}

Let us study $h\e$ in $B_{{{Y}}}\e$ ($g\e=0$ in $B_{{{Y}}}\e$).
Integrating by parts and using the Poincar\'{e} inequality we
obtain
\begin{multline}\label{vanish_B}
(u\e,\mathcal{A\e}h\e)_{L_{2,b\e}(
B\e_{{{Y}}})}=-\suml_{j=1}^m\suml_{i\in\I}\langle u\e\rangle_{
B_{ij}\e}\intl_{\check C_{ij}\e}{\partial
\mathbf{v}_{ij}\e\over\partial
r}\left(h_j(x_{ij}\e)-g(x_{ij}\e)\right) ds+o(1)=\\=\suml_{j=1}^m
a_j\left|\partial {B}_j\right|\intl_{Y} \Pi_j\e u\e(x)
(h_j(x)-g(x))dx+o(1)\underset{\eps\to 0}\to \suml_{j=1}^m
a_j\left|\partial {B}_j\right|\intl_{{{{Y}}}}
u_j(x)\left(h_j(x)-g(x)\right)dx
\end{multline}
In the same way we get
\begin{gather}\label{vanish_B1}
\liml_{\eps\to 0}(u\e,h\e)_{L_{2,b\e}(B\e_{{{Y}}})}=\suml_{j=1}^m
|{B}_j| b_j\intl_{\Omega}u_j(x)h_j(x)dx
\end{gather}

Finally, let us estimate the remaining integrals in
(\ref{int_ineq+}). One can easily obtain that
$$\eta_{Y}\e[w\e]+\|w\e\|^2_{L_{2,b\e}({Y})}<C$$ and therefore in view of
(\ref{1e_1})
\begin{gather}\label{vanish_2}
\liml_{\eps\to 0}\intl_{Y}\bigg(a\e \big(\nabla u\e,\nabla
((\mathbf{1}\e-1)w\e)\big)-\lambda\e b\e u\e(\mathbf{1}\e-1)w\e
\bigg)dx=0
\end{gather}

It is easy to see that the function $p\e=\left.\ds{\partial
w\e\over\partial n}\right|_{\partial {Y}}$ is bounded in
$L_2(\partial {Y})$ uniformly in $\eps$ and therefore there is a
subsequence (still denoted by $\eps$) and $p\in L_2(\partial {Y})$
such that
\begin{gather}\label{p_conv}
p\e\underset{\eps\to 0}\to p\text{ weakly in }L_2(\partial {Y})
\end{gather}
Moreover it is clear that $\forall k=\overline{1,n}$: $p\e(x+
e_k)=-\overline{\theta_k\e} p\e(x)$ for
$x=\underset{^{\overset{\qquad\quad\uparrow}{\qquad\quad
k\text{-th place}}\qquad }}{(x_1,x_2,\dots,0,\dots,x_n)}$.
Therefore
\begin{gather}\label{p_cond}
\forall k=\overline{1,n}:\ \quad p(x+ e_k)=-\overline{\theta_k}
p(x)\text{ for
}x=\underset{^{\overset{\qquad\quad\uparrow}{\qquad\quad
k\text{-th place}}\qquad }}{(x_1,x_2,\dots,0,\dots,x_n)}
\end{gather}
Taking into account (\ref{pi_trace_conv}), (\ref{u_cond}),
(\ref{p_conv}), (\ref{p_cond}) we get
\begin{gather}\label{vanish_3}
\liml_{\eps\to 0}\intl_{\partial{{{Y}}}}u\e{\partial
w\e\over\partial n} ds=\intl_{\partial {Y}}u p\ ds=0
\end{gather}

Then taking into account (\ref{vanish_G}), (\ref{vanish_G1}),
(\ref{vanish_T}), (\ref{vanish_T1}),
(\ref{vanish_F})-(\ref{vanish_2}), (\ref{vanish_3}) we pass to the
limit in (\ref{int_ineq+}) and obtain the equality
\begin{multline}\label{int_result}
\intl_{{Y}}\left( -u(x)|F|\suml_{k,l=1}^n
\widehat{a}^{kl}{\partial^2 g\over\partial x_k\partial
x_l}(x)-\lambda|F|u(x)g(x)+\suml_{j=1}^m\bigg( a_j|\partial
B_j|\big(g(x)-h_j(x)\big)u(x)\right.+\\\left. +a_j|\partial
B_j|\big(h_j(x)-g(x)\big)u_j(x)-\lambda |{B}_j|b_j
u_j(x)h_j(x)\bigg)\right) dx=0
\end{multline}
Recall that $g,h_j\in C^2(\mathbb{R}^n)$ are arbitrary functions
satisfying (\ref{w_per}).

Plugging $g=0$, $h_j=0$ for $j\not= k$ into (\ref{int_result}) and
taking into account the equality $|\partial B_j|={ |B_j|n r^{-1}}$
we get
\begin{gather}\label{u_k}
u_k={\sigma_k \over \sigma_k-\lambda}u,\quad k\in \{1,\dots,m\}
\end{gather}
Then setting $h_j=0$ for all $j=1,\dots,m$, integrating by parts
and taking into account (\ref{u_k}) we get
\begin{gather}\label{u}
\intl_{{Y}}\left(\suml_{k,l=1}^n \widehat{a}^{kl}{\partial
u\over\partial x_k}{\partial g\over\partial x_l}-\lambda
\mathcal{F}(\lambda) ug\right)dx=0
\end{gather}
where the function $\mathcal{F}(\lambda)$ is defined by
(\ref{mu_eq}).

Equality (\ref{u}) is valid for an arbitrary $g$ belonging to
$C^\infty(\mathbb{R}^n)$ and satisfying (\ref{w_per}). It is clear
that the set of such functions is dense in
$H_{\overline{\theta}}^1({Y})$. Therefore equality (\ref{u})
implies (\ref{lambdaA1}). Lemma \ref{th51} is proved.
\end{proof}

\begin{lemma}\label{th52}$u\not =0$.
\end{lemma}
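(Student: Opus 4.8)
The plan is to argue by contradiction. Suppose $u=0$. I will first show that then the mass of $u\e$ concentrates, as $\eps\to 0$, inside the interior balls $B_{ij}\e$, and then that such concentration forces $\lambda$ to coincide with one of the numbers $\sigma_j$, which contradicts the standing assumption $\lambda\notin\cupl_{j=1}^m\{\sigma_j\}$.

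For the concentration: by (\ref{equiv}) and $\|u\e\|_{L_{2,b\e}({{{Y}}})}=1$ we have $\|u\e\|_{L_2({{{Y}}})}\geq (C^+)^{-1}>0$. Writing the normalization as
\begin{gather*}
1=\|u\e\|^2_{L_2(F_{{{Y}}}\e)}+\suml_{j=1}^m b_j\|u\e\|^2_{L_2(\cup_{i\in\I}B_{ij}\e)}+\suml_{j=1}^m b_j\|u\e\|^2_{L_2(\cup_{i\in\I}G_{ij}\e)},
\end{gather*}
the last sum tends to $0$ by Lemma \ref{lm22} (exactly as in (\ref{star2})). In the first term $u\e=\Pi\e u\e$ on $F_{{{Y}}}\e$; since $\chi_{_{F_{{{Y}}}\e}}\to|F|$ weakly-$*$ in $L_\infty({{{Y}}})$ (the set $F_{{{Y}}}\e$ is $\eps$-periodic with cell volume $|F|\eps^n$) while $|\Pi\e u\e|^2\to|u|^2$ in $L_1({{{Y}}})$, this term converges to $|F|\,\|u\|_{L_2({{{Y}}})}^2=0$; in particular $\|u\e\|_{L_2(F_{{{Y}}}\e)}\to 0$. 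By the Poincar\'e inequality on each $B_{ij}\e$ and $\eta\e_{{{Y}}}[u\e]=\lambda\e=O(1)$, the middle term equals $\suml_j b_j|B_j|\intl_{{{Y}}}|\Pi_j\e u\e|^2dx+o(1)$; hence $\suml_j b_j|B_j|\intl_{{{Y}}}|\Pi_j\e u\e|^2dx\to 1$, so after passing to a subsequence there is $j_0$ with $\intl_{{{Y}}}|\Pi_{j_0}\e u\e|^2dx\geq c_1>0$. Finally, since $\Pi\e u\e\to u=0$ strongly in $L_2({{{Y}}})$, arguing as in the proof of Lemma \ref{lm21} (via inequality (\ref{ineq2}) together with $\langle u\e\rangle_{R_{ij_0}\e}=\langle\Pi\e u\e\rangle_{R_{ij_0}\e}$) gives $\eps^n\suml_{i\in\I}|\langle u\e\rangle_{R_{ij_0}\e}|^2\to 0$.

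Now the contradiction. For every $i\in\I$ one has $D_{ij_0}\e\subset {{{Y}}}_i\e\subset{{{Y}}}$ (by the choice of $\kappa$), so the function $\mathbf v_{ij_0}\e$ of (\ref{hat_v}), extended by zero outside $D_{ij_0}\e$, belongs to $\overset{\circ}{H}{}^1({{{Y}}})\subset H^1_{\overline{\theta\e}}({{{Y}}})$. Substituting it into (\ref{int_ineq}) and integrating by parts — the interface integrals vanish because $\mathbf v_{ij_0}\e$ satisfies the transmission conditions (\ref{sopr}), and the boundary integral vanishes because $\partial\mathbf v_{ij_0}\e/\partial n=0$ on $\partial D_{ij_0}\e$ — we obtain
\begin{gather*}
(u\e,\mathcal A\e\mathbf v_{ij_0}\e)_{L_{2,b\e}(D_{ij_0}\e)}=\lambda\e\,(u\e,\mathbf v_{ij_0}\e)_{L_{2,b\e}(D_{ij_0}\e)}.
\end{gather*}
Since $\mathcal A\e\mathbf v_{ij_0}\e$ vanishes on $G_{ij_0}\e$ (where $\mathbf v_{ij_0}\e$ is harmonic and $a\e$ is constant) and is supported in the two thin transition shells of (\ref{hat_v}) (cf. Lemma \ref{lm23}), the explicit form of $\mathbf v_{ij_0}\e$ and the asymptotics of $\check A_{j_0}\e,\hat A_{j_0}\e$ give $\intl_{\{5r\e/8<|x-x_{ij_0}\e|<3r\e/4\}}\Delta\mathbf v_{ij_0}\e\,dx=-a_{j_0}|\partial B_{j_0}|\eps^n(1+o(1))$ and $\intl_{\{r\e+\kappa\eps/2<|x-x_{ij_0}\e|<r\e+\kappa\eps\}}\Delta\mathbf v_{ij_0}\e\,dx=a_{j_0}|\partial B_{j_0}|\eps^n(1+o(1))$. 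Combining this with (\ref{vbf_est1})--(\ref{vbf_est2}), the Poincar\'e inequality on $B_{ij_0}\e$ and $R_{ij_0}\e$, and $a_{j_0}|\partial B_{j_0}|=\sigma_{j_0}b_{j_0}|B_{j_0}|$ (see (\ref{sigmarho})), the identity above, divided by $b_{j_0}|B_{j_0}|\eps^n$, takes the form
\begin{gather*}
\big((\sigma_{j_0}-\lambda\e)+o(1)\big)\langle u\e\rangle_{B_{ij_0}\e}=(\sigma_{j_0}+o(1))\langle u\e\rangle_{R_{ij_0}\e}+\delta_i\e,
\end{gather*}
where the $o(1)$ are scalars independent of $i$ and $\eps^n\suml_{i\in\I}|\delta_i\e|^2\to 0$ (this last claim uses the cell-by-cell Poincar\'e inequality and $\eta\e_{{{Y}}}[u\e]=O(1)$). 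Multiplying by $\eps^{n/2}$, squaring, summing over $i\in\I$ and letting $\eps\to 0$ yields, in view of $\eps^n\suml_i|\langle u\e\rangle_{B_{ij_0}\e}|^2=\|\Pi_{j_0}\e u\e\|^2_{L_2({{{Y}}})}$ and $\eps^n\suml_i|\langle u\e\rangle_{R_{ij_0}\e}|^2\to 0$, that $|\sigma_{j_0}-\lambda|^2\intl_{{{Y}}}|\Pi_{j_0}\e u\e|^2dx\to 0$. Since $\lambda\neq\sigma_{j_0}$, this contradicts $\intl_{{{Y}}}|\Pi_{j_0}\e u\e|^2dx\geq c_1>0$. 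Therefore $u\neq 0$.

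The main obstacle is this last, local, computation. One has to push the identity through the integration by parts so that all interface contributions cancel, use crucially that $\mathcal A\e\mathbf v_{ij_0}\e\equiv 0$ on the shell $G_{ij_0}\e$ — this is what removes the contribution of the (possibly very large) gradient of $u\e$ inside the walls of the shells — extract the factor $\sigma_{j_0}$ from the shell integrals of $\Delta\mathbf v_{ij_0}\e$, and then keep every remainder $o(\eps^n)$ after squaring and summing over the $\sim\eps^{-n}$ cells; the cell-by-cell Poincar\'e inequality together with $\eta\e_{{{Y}}}[u\e]=\lambda\e=O(1)$ is the tool used repeatedly for these estimates.
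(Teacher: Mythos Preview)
Your argument is correct, and it is genuinely different from the paper's route. The paper proceeds \emph{directly}: it introduces the spherical averages $u_{ij}\e$, sets $\mathbf{u}_{ij}\e=u_{ij}\e-\langle u\e\rangle_{S_{ij}\e}$, observes that $\mathbf{u}_{ij}\e\in\mathrm{dom}(\mathcal A_{D_{ij}\e}^{D,\eps})$ and solves $(\mathcal A_{D_{ij}\e}^{D,\eps}-\lambda\e)\mathbf{u}_{ij}\e=\lambda\e\langle u\e\rangle_{S_{ij}\e}$, and then expands $\mathbf{u}_{ij}\e$ in the eigenbasis of $\mathcal A_{D_{ij}\e}^{D,\eps}$. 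Using Lemma~\ref{lm23} ($\lambda_1^{D,\eps}(D_{ij}\e)\to\sigma_j$) together with Lemma~\ref{lm24} ($\lambda_2^{D,\eps}(D_{ij}\e)\to\infty$) to isolate the first mode, the paper computes the exact limit
\[
1=\|u\e\|_{L_{2,b\e}({Y})}^2\to\Bigl(|F|+\suml_{j=1}^m b_j|B_j|\,\sigma_j^2/(\sigma_j-\lambda)^2\Bigr)\|u\|_{L_2({Y})}^2,
\]
from which $u\neq0$ follows at once. Your proof, by contrast, argues by contradiction and never invokes the spectral expansion: you plug the approximate first eigenfunction $\mathbf v_{ij_0}\e$ of (\ref{hat_v}) as a test function into (\ref{int_ineq}) cell by cell, extract the relation $(\sigma_{j_0}-\lambda)\langle u\e\rangle_{B_{ij_0}\e}\approx\sigma_{j_0}\langle u\e\rangle_{R_{ij_0}\e}$, and combine it with the vanishing of $\suml_i\eps^n|\langle u\e\rangle_{R_{ij_0}\e}|^2$ forced by $u=0$. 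This is more elementary---in particular it does not use Lemma~\ref{lm24} at all---and shorter; what you give up is the explicit limiting norm identity above, which the paper's method yields as a by-product.
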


\begin{proof} Let us introduce the spherical coordinates $(r,\Theta)$ in $D_{ij}\e$
and the function $u_{ij}\e$ by the formula
\begin{gather*}
u_{ij}\e(\rho,\Theta)=\langle u\e\rangle_{S\e_{ij}(\rho)},\text{
where }S\e_{ij}(\rho)=\left\{x\in \mathbb{R}^n:\
|x-x_{ij}\e|=\rho\right\}
\end{gather*}
One has the following Poincar\'{e} inequality:
\begin{gather*}
\|u\e-u_{ij}\e\|^2_{L_2(S\e_{ij}(\rho))}\leq
C\rho^2\|\nabla_{\Theta} u\e\|^2_{L_2(S\e_{ij}(\rho))}\leq
C_1\eps^2\|\nabla u\e\|^2_{L_2(S\e_{ij}(\rho))}
\end{gather*}
(here $\nabla_\Theta$ is a gradient on $S_{ij}\e(\rho)$: for
example in the case $n=2$ one has $\nabla_\Theta u
={1\over\rho^2}{\partial u\over\partial\theta}$).
 Integrating it by $\rho$ from $0$ to $r\e-d\e$ and summing by
$i$ we get
\begin{gather}\label{poincare}
\suml_{i\in\I}\|u\e-u_{ij}\e\|^2_{L_2(B\e_{ij})} \leq C\eps^2
\|\nabla u\e\|^2_{L_2(\cupl_i B_{ij}\e)}\leq C_1\eps^2\lambda\e
\end{gather}

We denote ${\mathbf{u}}_{ij}\e=u_{ij}\e-\langle
u\e\rangle_{S_{ij}\e}$. Clearly
$\mathbf{u}_{ij}\e\in\mathrm{dom}(\mathcal{A}_{D_{ij}\e}^{D,\eps})$
and
\begin{gather*}
\mathcal{A}_{D_{ij}\e}^{D,\eps} \mathbf{u}_{ij}\e-\lambda\e
\mathbf{u}_{ij}\e=\lambda\e\langle u\e\rangle_{S_{ij}\e}
\end{gather*}
Recall that $\lambda\notin\cupl_{j=1}^m\{\sigma_j\}$. Then in view
of Lemmas \ref{lm23}, \ref{lm24}\quad
$\lambda\e\notin\sigma(\mathcal{A}_{D_{ij}\e}^{D,\eps})$ when
$\eps$ is small enough. Therefore we have the following expansion:
\begin{gather}\label{decomp1}
\mathbf{u}_{ij}\e=\suml_{k=1}^\infty I^k_{ij}(\eps),\text{ where }
I^k_{ij}(\eps)={v^D_k(D_{ij}\e)}{\left(\lambda\e\langle
u\e\rangle_{S_{ij}\e},
v^D_k(D_{ij}\e)\right)_{L_{2,b\e}(D_{ij}\e)}\over
\left\|v^D_k(D_{ij}\e)\right\|_{L_{2,b\e}(D_{ij}\e)}^2
\left(\lambda^{D,\eps}_k(D_{ij}\e)-\lambda\e\right)}
\end{gather}
Here $\left\{v^D_k(D_{ij}\e)\right\}_{k=1}^m$ is a system of
eigenfunctions of $\mathcal{A}_{D_{ij}\e}^{D,\eps}$ corresponding
to $\left\{\lambda^{D,\eps}_k(D_{ij}\e)\right\}_{k=1}^m$ and such
that
$\left(v^D_k(D_{ij}\e),v^D_l(D_{ij}\e)\right)_{L_{2,b\e}(D_{ij}\e)}=0$
if $k\not= l$.

Using Lemmas \ref{lm21}, \ref{lm24} we get (for
$j\in\left\{1,\dots,m\right\}$)
\begin{gather}\label{sum2345}
\suml_{i\in\I}\left\|\suml_{k=2}^\infty
I_{ij}^k(\eps)\right\|^2_{L_2(B_{ij}\e)}\leq C
\maxl_{k=\overline{2,\infty}}
\left|\lambda^{D,\eps}_k(D_{ij}\e)-\lambda\e\right|^{-2}
\suml_{i\in\I}\left|\langle
u\e\rangle_{S_{ij}\e}\right|^2\eps^n\underset{\eps\to 0}\to 0
\end{gather}

As in Lemma \ref{lm23} we denote $v_{ij}\e=v^D_1(D_{ij}\e)$
assuming that $v_{ij}\e$ is normalized by condition
(\ref{v_mean}). Using estimates (\ref{v_est3}), (\ref{v_est5}) and
Lemma \ref{lm21} we get
\begin{gather}\label{sum1}
\suml_{i\in\I}\left\|I_{ij}^1(\eps)\right\|^2_{L_2(B_{ij}\e)}\sim
\suml_{i\in \I}{\lambda^2|{B}_j|\left|\langle
u\e\rangle_{S_{ij}\e}\right|^2\over(\sigma_j-\lambda)^2}\eps^n\sim{\lambda^2|{B}_j|
\|u\|^2_{L_2({Y})}\over(\sigma_j-\lambda)^2}
\end{gather}
as $\eps\to 0$. It follows from (\ref{decomp1})-(\ref{sum1}) that
\begin{gather}\label{v_lim}
\liml_{\eps\to
0}\suml_{i\in\I}\left\|\mathbf{u}_{ij}\e\right\|^2_{L_2(B_{ij}\e)}=
{\lambda^2|B_j|\|u\|^2_{L_2({Y})}\over(\sigma_j-\lambda)^2}
\end{gather}
Similarly we obtain
\begin{gather}\label{v_lim_star}
\intl_{B_{ij}\e} \mathbf{u}_{ij}\e dx\sim {\langle
u\e\rangle_{S_{ij}\e}\lambda
|B_j|\over\sigma_j-\lambda}\eps^n\text{ as }\eps\to 0
\end{gather}

Using (\ref{v_est3}), (\ref{v_est5}), (\ref{v_lim}),
(\ref{v_lim_star}) and Lemma \ref{lm21} we get
\begin{gather}\notag
\suml_{i\in\I}\left\|u_{ij}\e\right\|^2_{L_{2,b\e}(B_{ij}\e)}=\suml_{i\in\I}\left(\left\|
\mathbf{u}_{ij}\e\right\|^2_{L_2(B_{ij}\e)}+2\langle
u\e\rangle_{S_{ij}\e}\intl_{B_{ij}\e}\mathbf{u}_{ij}\e(x)
dx+\left|\langle u\e\rangle_{S_{ij}\e}\right|^2\cdot
|B_{j}\e|\eps^n\right)b_j\underset{\eps\to
0}\to\\\label{last}\underset{\eps\to
0}\to\left[{\lambda^2|{B}_j|\over(\sigma_j-\lambda)^2}+
{2\lambda|{B}_j|\over\sigma_j-\lambda}+|{B}_j|\right]b_j\|u\|^2_{L_2({Y})}=
|{B}_j|b_j\left(\sigma_j\over\sigma_j-\lambda\right)^2\|u\|_{L_2({Y})}^2
\end{gather}

Using the Poincar\'{e} inequality and Lemma \ref{lm21} one can
easily prove that
\begin{gather}\label{uF1}
\|u\e\|^2_{L_{2,b\e}(F_{Y}\e)}=|F|\suml_{i\in\I}\langle
u\e\rangle_{F_i\e}\eps^n+o(1)\underset{\eps\to 0}\to  |F|\cdot
\|u\|^2_{L_2({Y})}
\end{gather}
Furthermore in view of Lemma \ref{lm22}
\begin{gather}\label{uF2}
\liml_{\eps\to 0}\|u\e\|^2_{L_{2,b\e}(G_{Y}\e)}=0
\end{gather}

Finally taking into account (\ref{poincare}),
(\ref{last})-(\ref{uF2}) we obtain
\begin{gather*}
1=\|u\e\|_{L_{2,b\e}({Y})}^2\underset{\eps\to 0}\to
\|u\|^2_{L_2({Y})}\left[|{F}|+\suml_{j=1}^m
\left(\sigma_j\over\sigma_j-\lambda\right)^2 |B_j|b_j\right]
\end{gather*}
and therefore $u\not= 0$. Lemma \ref{th52} is proved.
\end{proof}

It follows from Lemmas \ref{th51}-\ref{th52} that $\lambda
\mathcal{F}(\lambda)$ belong to the spectrum
$\sigma(\widehat{\mathcal{A}}^\theta_Y)$ of the operator
$\widehat{\mathcal{A}}^\theta_Y$. Therefore $\lambda
\mathcal{F}(\lambda)\in [0,\infty)$ and in view of (\ref{0infty})
$\lambda\in\sigma(\mathcal{A}^0)$. Condition (\ref{ah}) is proved.


\subsection{Proof of condition \eqref{bh}} Let
$\lambda\in\sigma(\mathcal{A}^0)$. Let us prove that there is
$\lambda\e\in \sigma(\mathcal{A}\e)$ such  that $\liml_{\eps\to
0}\lambda\e=\lambda$.

We assume the opposite: the subsequence (still denoted by $\eps$)
and $\delta>0$ exist such that
\begin{gather}
\label{dist} \dist(\lambda,\sigma(\mathcal{A}\e))>\delta.
\end{gather}

Since $\lambda\in\sigma(\mathcal{A}^0)$  then the function
$F=\left(\begin{matrix}f\\f_1\\\dots\\f_m\end{matrix}\right)\in
L_2(\mathbb{R}^n)\underset{j=\overline{1,m}}\oplus
L_{2,{\rho_j/\sigma_j}}(\mathbb{R}^n)$ exists such that
\begin{gather}\label{notinim}
F\notin \mathrm{im}(\mathcal{A}-\lambda\mathrm{I}),\text{ where
}\mathrm{I}\text{ is the identical operator}\end{gather}

It follows from (\ref{dist}) that
$\lambda\in\mathbb{R}\setminus\sigma(\mathcal{A}\e)$. Then
$\mathrm{im}(\mathcal{A}\e-\lambda\mathrm{I})=L_{2,b\e}(\mathbb{R}^n)$
and hence for an arbitrary $f\e\in L_{2,b\e}(\mathbb{R}^n)$ there
is the unique $u\e\in \mathrm{dom}(\mathcal{A}\e)$ such that
\begin{gather}
\label{bvp1} \mathcal{A}\e u\e-\lambda u\e=f\e
\end{gather}

We substitute the following $f\e(x)\in L_{2,b\e}(\mathbb{R}^n)$
into (\ref{bvp1}):
\begin{gather*}
f\e(x)=\begin{cases}\langle f\rangle_{Y_i\e},& x\in F_i\e,\\\ds
\langle f_j\rangle_{Y_i\e},& x\in B_{ij}\e,\\0,& x\in
\cupl_{i.j}G_{ij}\e.
\end{cases}
\end{gather*}
It is clear that the norms $\|f\e\|_{L_{2,b\e}(\mathbb{R}^n)}$ are
bounded uniformly in $\eps$. Then in view of (\ref{dist}) $u\e$
satisfies the inequality
\begin{gather*}
\|u\e\|_{L_{2,b\e}(\mathbb{R}^n)}\leq
\delta^{-1}{\|f\e\|_{L_{2,b\e}(\mathbb{R}^n)}}\leq C
\end{gather*}
Furthermore
\begin{gather*}
\|\nabla u\e\|^2_{L_{2}(\mathbb{R}^n)}\leq
\|f\e\|_{L_{2,b\e}(\mathbb{R}^n)}\cdot
\|u\e\|_{L_{2,b\e}(\mathbb{R}^n)}+|\lambda|\cdot
\|u\e\|_{L_{2,b\e}(\mathbb{R}^n)}^2\leq C
\end{gather*}
Hence a subsequence (still denoted by $\eps$) and $u\in
H^1(\mathbb{R}^n)$, $u_j\in L_2(\mathbb{R}^n)$ such that
\begin{gather*}
\Pi\e u\e\rightarrow u\text{ weakly in }
H^1(\mathbb{R}^n)\text{ and strongly in }L_2(G)\text{ for any compact set }G\subset\mathbb{R}^n\\
\Pi_j\e u\e\rightarrow u_j\text{ weakly in } L_2(\mathbb{R}^n)\
(j=1,\dots,m)
\end{gather*}
where $\Pi\e$, $\Pi_j\e$ ($j=1,\dots,m$) are the operators
introduced above in the proof of condition (\ref{ah}).

For an arbitrary ${w}\e\in
\overset{\circ}{C}{}^\infty(\mathbb{R}^n)$ one has the following
integral equality:
\begin{gather}\label{int_ineq_f}
\intl_{{{\mathbb{R}^n}}}\bigg(a\e(x)\big(\nabla u\e(x),\nabla
{w}\e(x)\big)-\lambda\e b\e(x) u\e(x) {w}\e(x)-b\e(x)
f\e(x){w}\e(x)\bigg)dx=0
\end{gather}
We substitute into (\ref{int_ineq_f}) the function ${w}\e$ of the
form (\ref{we})-(\ref{we2}), but with $g,h_j\in
\overset{\circ}{C}{}^\infty(\mathbb{R}^n)$. Making the same
calculations as in the proof of condition (\ref{ah}) we obtain
\begin{multline}\label{int_ineq_final}
\intl_{\mathbb{R}^n}\bigg[ -u(x)\suml_{k,l=1}^n
\widehat{a}_{kl}|F|{\partial^2 g\over\partial
x_k\partial x_l}(x)-\lambda|F|u(x)g(x)-|F|f(x)g(x)+\\
+\suml_{j=1}^m\bigg(
a_j|\partial{B}_j|\big(g(x)-h_j(x)\big)u(x)+\\
+a_j|\partial{B}_j|\big(h_j(x)-g(x)\big)u_j(x)-\lambda |{B}_j|b_j
u_j(x)h_j(x)-|{B}_j|b_jf_j (x)h_j(x)\bigg)\bigg] dx=0
\end{multline}
for an arbitrary $g,h_j\in
\overset{\circ}{C}{}^\infty(\mathbb{R}^n)$. It follows from
(\ref{int_ineq_final}) that
\begin{gather*}
U=\left(\begin{matrix}u\\u_1\\\dots\\u_m\end{matrix}\right)\in\mathrm{dom}
(\mathcal{A}^0)\text{\quad and\quad }\mathcal{A}^0U-\lambda U=F
\end{gather*}
We obtain a contradiction with (\ref{notinim}).  Condition
(\ref{bh}) is proved.


\section{\label{sec6}End of proof of Theorem \ref{th1}}

In general the Hausdorff convergence of $\sigma(\mathcal{A}\e)$ to
$\sigma(\mathcal{A}^0)$ does not imply
(\ref{th1_f1})-(\ref{th1_f2})\footnote{For example, the set
$\sigma\e:=\sigma(\mathcal{A}^0)\cap\bigg(\cupl_{k\in
\mathbb{N}}\left[\eps k,\eps(k+{1\over 2})\right]\bigg)$ also
converges to $\sigma(\mathcal{A}^0)$ in the Hausdorff sense, but
the number of gaps in $\sigma\e\cap[0,L]$ tends to infinity as
$\eps\to 0$.}. However if we prove that $\sigma(\mathcal{A}\e)$
has at most $m$ gaps in $[0,L]$ when $\eps$ is less some $\eps_L$
then this implication holds true. More precisely the following
simple proposition is valid.

\begin{proposition}\label{prop1}
Let
$\mathcal{B}\e=[0,L]\setminus\left(\cupl_{j=1}^{m\e}(\a_j\e,\b_j\e)\right)$,
$\mathcal{B}=[0,L]\setminus\left(\cupl_{j=1}^{m}(\a_j,\b_j)\right)$,
where $L<\infty$ and
\begin{gather*}
0\leq\a_1\e,\quad \a\e_{j}<\b\e_{j}\leq \a\e_{j+1},\
j=\overline{1,m\e-1},\quad \a\e_{m\e}\leq L\\ 0<\a_1,\quad
\a_{j}<\b_{j}<\a_{j+1},\
j=\overline{1,m-1},\quad \a_{m}<L\\
m\e\leq m\\
\mathcal{B}\e\text{ converges to }\mathcal{B}\text{ in the
Hausdorff sense as }\eps\to 0
\end{gather*}

Then $m\e=m$ when $\eps$ is small enough and
$$\forall j=1,\dots,m:\quad \liml_{\eps\to 0}\a_j\e=\a_j,\quad \liml_{\eps\to 0}\b_j\e=\b_j$$
\end{proposition}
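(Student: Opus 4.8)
The plan is to argue entirely at the level of compact subsets of $[0,L]$, using only the two defining properties of Hausdorff convergence: \textbf{(i)} if $x\e\in\mathcal{B}\e$ and $x\e\to x$ then $x\in\mathcal{B}$; \textbf{(ii)} every $x\in\mathcal{B}$ is the limit of some sequence $x\e\in\mathcal{B}\e$. Fix once and for all a small $\delta>0$, smaller than $\tfrac12\minl_{j}(\b_j-\a_j)$ and than $\minl_{j}(\a_{j+1}-\b_j)$ (with the conventions $\b_0:=0$, $\a_{m+1}:=L$), and set $K_j:=[\a_j+\delta,\b_j-\delta]\subset(\a_j,\b_j)$, a nonempty compact interval disjoint from $\mathcal{B}$.

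\emph{Step 1 (each gap of $\mathcal{B}$ traps a gap of $\mathcal{B}\e$).} First I show $K_j\cap\mathcal{B}\e=\varnothing$ for $\eps$ small: otherwise, along a subsequence there are points $x\e\in K_j\cap\mathcal{B}\e$; by compactness of $K_j$ a further subsequence converges to some $x\in K_j$, and property (i) gives $x\in\mathcal{B}$, contradicting $K_j\cap\mathcal{B}=\varnothing$. Hence there is $\eps_\delta>0$ so that for $\eps<\eps_\delta$ one has $K_j\subset[0,L]\setminus\mathcal{B}\e=\cupl_{k=1}^{m\e}(\a_k\e,\b_k\e)$; since $K_j$ is nonempty and connected it lies inside exactly one of these intervals, say $(\a_{k(j,\eps)}\e,\b_{k(j,\eps)}\e)$ (in particular $m\e\geq1$).

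\emph{Step 2 (the map $j\mapsto k(j,\eps)$ is an order isomorphism, hence the identity).} Suppose $k(j_1,\eps)=k(j_2,\eps)=:k$ with $j_1<j_2$. Then $(\a_k\e,\b_k\e)$ contains $K_{j_1}\cup K_{j_2}$, hence $\a_k\e<\a_{j_1}+\delta$ and $\b_k\e>\b_{j_2}-\delta$; in particular it contains the point $y:=\b_{j_1}$, which lies in $[\b_{j_1},\a_{j_1+1}]\subset\mathcal{B}$ and satisfies $\dist\big(y,[0,L]\setminus(\a_k\e,\b_k\e)\big)\geq\minl\big((\b_{j_1}-\a_{j_1})-\delta,\ (\b_{j_2}-\delta)-\b_{j_1}\big)>0$, a bound independent of $\eps$. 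Since $y\in\mathcal{B}$, property (ii) yields $y\e\in\mathcal{B}\e\subset[0,L]\setminus(\a_k\e,\b_k\e)$ with $y\e\to y$, contradicting that bound. Thus $j\mapsto k(j,\eps)$ is injective; being an injection of an $m$‑element set into an $m\e$‑element set with $m\e\leq m$, it is a bijection and $m\e=m$. Because the intervals $K_1,\dots,K_m$ are arranged strictly from left to right and so are the gaps $(\a_1\e,\b_1\e),\dots,(\a_m\e,\b_m\e)$, this bijection is monotone, hence the identity, so $K_j\subset(\a_j\e,\b_j\e)$ for all $j$ and all $\eps<\eps_\delta$. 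Consequently $\a_j\e\leq\a_j+\delta$ and $\b_j\e\geq\b_j-\delta$; letting $\delta\to0$ gives $\limsup_{\eps\to0}\a_j\e\leq\a_j$ and $\liminf_{\eps\to0}\b_j\e\geq\b_j$.

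\emph{Step 3 (reverse inequalities).} Suppose, along a subsequence, $\a_j\e\leq\a_j-\eta$ for some small $\eta>0$. Then $z:=\a_j-\eta/2$ lies in $(\b_{j-1},\a_j)\subset\mathcal{B}$ and, using $\a_j\e\leq\a_j-\eta$ and $\b_j\e\geq\b_j-\delta$ from Step 2, also lies in $(\a_j\e,\b_j\e)$ at distance at least $\minl\big(\eta/2,\ (\b_j-\a_j)+\eta/2-\delta\big)>0$ from its endpoints, uniformly in $\eps$; as in Step 2 this contradicts property (ii) applied to $z\in\mathcal{B}$. Hence $\liminf_{\eps\to0}\a_j\e\geq\a_j$, so $\liml_{\eps\to0}\a_j\e=\a_j$; the relation $\liml_{\eps\to0}\b_j\e=\b_j$ is obtained identically, now comparing with the point $\b_j+\eta/2\in(\b_j,\a_{j+1})\subset\mathcal{B}$. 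The main obstacle is not analytic — everything reduces to a three-$\eps$ argument plus the pigeonhole principle — but rather the bookkeeping in Step 2: one must set things up so that the trapping correspondence $j\mapsto k(j,\eps)$ is forced to be a \emph{monotone} bijection, which is exactly what rules out the degenerate scenario of a single gap of $\mathcal{B}\e$ swallowing two gaps of $\mathcal{B}$ and what pins $m\e$ to $m$.
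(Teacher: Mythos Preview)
Your proof is correct. The paper does not actually prove this proposition: it is stated as a ``simple proposition'' and left without argument, so there is no proof in the paper to compare with. Your three-step argument --- trapping a compact subinterval $K_j$ of each limiting gap inside some gap of $\mathcal{B}\e$ via property~(i), using property~(ii) and pigeonhole to force the trapping map $j\mapsto k(j,\eps)$ to be a monotone bijection (hence the identity and $m\e=m$), and then pinning the endpoints by a second application of~(ii) --- is exactly the sort of elementary verification the paper presumably had in mind. One cosmetic remark: the phrase ``fix once and for all a small $\delta>0$'' followed later by ``letting $\delta\to0$'' is slightly jarring; it would read more smoothly to say at the outset that the argument is carried out for each sufficiently small $\delta$, yielding bounds of the form $\a_j\e\leq\a_j+\delta$, $\b_j\e\geq\b_j-\delta$ for $\eps<\eps_\delta$, from which the $\limsup$/$\liminf$ inequalities follow. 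But this is only phrasing; the mathematics is sound.
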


We introduce the notation
$[a_{k}^{-}(\eps),a_{k}^{+}(\eps)]:=\cupl_{\theta\in
\mathbb{T}^n}\left\{\lambda_k^{\theta,\eps}({Y}_0\e)\right\} $.

\begin{lemma}\label{lm4}$\liml_{\eps\to 0}a_{m+1}^{+}(\eps)=\infty$
\end{lemma}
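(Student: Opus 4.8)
The plan is to produce one quasi-momentum $\theta_*\in\mathbb{T}^n$ for which $\lambda_{m+1}^{\theta_*,\eps}({Y}_0\e)\geq C^{-1}\eps^{-2}$; since $a_{m+1}^+(\eps)=\maxl_{\theta\in\mathbb{T}^n}\lambda_{m+1}^{\theta,\eps}({Y}_0\e)$, this immediately gives $\liml_{\eps\to 0}a_{m+1}^+(\eps)=\infty$. The natural choice is $\theta_*=(-1,1,\dots,1)$. Heuristically, the $m$ ``resonator'' modes, which are localized inside the balls $B_{0j}\e$ ($j=1,\dots,m$) and decay through the low-conductivity walls $G_{0j}\e$, stay bounded regardless of $\theta$, whereas any function that is not almost constant on each $B_{0j}\e$ must oscillate on the scale $\eps$ in the $x_1$-direction and therefore carries Dirichlet energy of order $\eps^{-2}$. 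I would make this rigorous by exhibiting, via the min-max principle, a subspace of $H^1_{\theta_*}({Y}_0\e)$ of codimension at most $m$ on which the Rayleigh quotient $\eta\e_{{Y}_0\e}[u]/\|u\|^2_{L_{2,b\e}({Y}_0\e)}$ is bounded below by $C^{-1}\eps^{-2}$.

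Concretely, set $W\e=\{u\in H^1_{\theta_*}({Y}_0\e):\ \langle u\rangle_{B_{0j}\e}=0,\ j=1,\dots,m\}$, the intersection of the kernels of $m$ bounded linear functionals on $H^1_{\theta_*}({Y}_0\e)$, so $\mathrm{codim}\,W\e\leq m$. For $u\in W\e$ I would bound the three pieces of $\|u\|^2_{L_{2,b\e}({Y}_0\e)}$ by $\eta\e_{{Y}_0\e}[u]$ as follows. First, on $F_0\e$ one has $a\e\equiv b\e\equiv 1$, and since $\overline{B_{0j}\e\cup G_{0j}\e}$ is exactly the closed ball of radius $r\e$ centered at $\eps x_j$, we get $F_0\e=\eps F$ with $F={Y}\setminus\cupl_{j=1}^m\overline{B_j}$; rescaling by $\eps$ and using that $F$ is connected while the antiperiodicity in $x_1$ forbids the constant function, the Laplacian on $F$ with ($x_1$-antiperiodic, $x_k$-periodic for $k\geq2$, Neumann on each $\partial B_j$) boundary conditions has a positive first eigenvalue $\nu_*>0$; hence $\|u\|^2_{L_2(F_0\e)}\leq\nu_*^{-1}\eps^2\,\eta\e_{{Y}_0\e}[u]$. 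Second, on each $B_{0j}\e$ one has $a\e\equiv1$ and $\langle u\rangle_{B_{0j}\e}=0$, so the Poincar\'e inequality on a ball of diameter $\leq2r\eps$ gives $\|u\|^2_{L_{2,b\e}(B_{0j}\e)}=b_j\|u\|^2_{L_2(B_{0j}\e)}\leq C\eps^2\,\eta\e_{{Y}_0\e}[u]$. Third, on each $G_{0j}\e$ I would apply Lemma \ref{lm22} to $v=u|_{D_{0j}\e}$ (note $D_{0j}\e\subset{Y}_0\e$ and $R_{0j}\e\subset F_0\e$) and combine it with the first estimate and $\gamma>3$ to obtain $\|u\|^2_{L_{2,b\e}(G_{0j}\e)}\leq C\eps^{\gamma-1}\,\eta\e_{{Y}_0\e}[u]$. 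Summing over $j$, for $\eps$ small one gets $\|u\|^2_{L_{2,b\e}({Y}_0\e)}\leq C\eps^2\,\eta\e_{{Y}_0\e}[u]$ for every $u\in W\e$, and the ``codimension'' form of the Courant--Fischer theorem yields $\lambda_{m+1}^{\theta_*,\eps}({Y}_0\e)\geq C^{-1}\eps^{-2}$, hence $a_{m+1}^+(\eps)\geq\lambda_{m+1}^{\theta_*,\eps}({Y}_0\e)\to\infty$.

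The only genuinely delicate step is the estimate on $G_{0j}\e$, which is why the auxiliary lemmas of Section \ref{sec3} are needed: since the conductivity $a_j\eps^{\gamma+1}$ in the walls tends to zero, a function with moderate $L_2$-norm in $G_{0j}\e$ may contribute almost nothing to $\eta\e$, and Lemma \ref{lm22} is precisely the quantitative inequality that controls $\|u\|^2_{L_2(G_{0j}\e)}$ by the energy inside $G_{0j}\e$ together with the energy and the $L_2$-norm in the adjacent collar $R_{0j}\e\subset F_0\e$; the exponent $\gamma>3$ is exactly what makes this contribution negligible next to the $\eps^2$ coming from $F_0\e$ and the $B_{0j}\e$. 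The remaining ingredients are routine Poincar\'e and scaling arguments, together with the observation that $W\e$ has honest codimension $\leq m$ because each functional $u\mapsto\langle u\rangle_{B_{0j}\e}$ is defined on all of $H^1_{\theta_*}({Y}_0\e)$.
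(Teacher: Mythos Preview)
Your argument is correct, but it is genuinely different from the paper's proof. The paper does not work with a particular quasi-momentum at all: it first shows, by the same IOS-type compactness argument used for Lemma~\ref{lm24}, that $\eps^2\lambda_{m+2}^{N,\eps}({Y}_0\e)\to\lambda_{m+2}>0$, where $\lambda_{m+2}$ is the $(m{+}2)$-nd eigenvalue of the decoupled Neumann problem on $F\sqcup B_1\sqcup\cdots\sqcup B_m$; together with the bracketing \eqref{enclosure} this gives $a_{m+2}^-(\eps)\to\infty$. The conclusion $a_{m+1}^+(\eps)\to\infty$ is then obtained \emph{by contradiction using condition \eqref{bh}} of the Hausdorff convergence proved in Section~\ref{sec5}: a bounded $a_{m+1}^+(\eps)$ would create a spurious gap above $\mu_m$.

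Your route is more self-contained: by exhibiting the explicit codimension-$m$ subspace $W\e$ and combining a $\theta_*$-Poincar\'e inequality on $F_0\e=\eps F$, the mean-zero Poincar\'e inequality on each $B_{0j}\e$, and Lemma~\ref{lm22} for the shells, you obtain $\lambda_{m+1}^{\theta_*,\eps}({Y}_0\e)\geq C^{-1}\eps^{-2}$ directly, without invoking either the IOS theorem or the Hausdorff convergence. This buys independence from Section~\ref{sec5} and a quantitative rate; the paper's approach, on the other hand, fits naturally into its overall logic (Hausdorff convergence is the main result anyway) and avoids singling out a special $\theta_*$. Both proofs ultimately rest on the same mechanism---only the $m$ ``ball averages'' survive at leading order, everything orthogonal to them oscillates at scale $\eps$---but you make this explicit via min--max, whereas the paper extracts it through limiting eigenvalue identification.
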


\begin{proof}  In the same way as in the proof Lemma \ref{lm24}
we obtain the following equality
\begin{gather}\label{relationship1}
\liml_{\eps\to 0}\eps^2\lambda_k^{N,\eps}({Y}_0\e)=\lambda_k,\
k=1,2,3...
\end{gather}
where $\left\{\lambda_k\right\}_{k\in\mathbb{N}}$ are the
eigenvalues of the operator $\mathbf{A}$ which acts in the space
$L_2(F)\underset{j=\overline{1,m}}\oplus L_{2,b_j}({B}_j)$ and is
defined by the operation
\begin{gather*}
\mathbf{A}=-\left(\begin{matrix}\Delta^N_{F}&0&\dots&0\\
0&b_1^{-1}\Delta^N_{{B}_1}&\dots&0\\\vdots&\vdots&\ddots&\vdots\\0&0&\dots&b_m^{-1}\Delta_{{B}_m}^N\end{matrix}\right)
\end{gather*}
(here $\Delta^N_{F}$ and $\Delta^N_{B_j}$ are the Neumann
Laplacians in $F$ and $B_j$). It is clear that $\lambda_j=0$ for
$j=1,\dots,m+1$ while $\lambda_{m+2}>0$. Then using
(\ref{relationship1}) and taking into account (\ref{enclosure}) we
get
$$\liml_{\eps\to 0}a_{m+2}^{-}(\eps)\geq\liml_{\eps\to 0}\lambda_{m+2}^N({Y}_0\e)=
\lambda_{m+2}\liml_{\eps\to 0}\eps^{-2}=\infty$$

Suppose that there is a subsequence (still denoted by $\eps$) such
that the numbers $a_{m+1}^{+}(\eps)$ are bounded uniformly in
$\eps$. Let the numbers $L,L_1$ be such that ${\mu_m}<L<L_1$ and
$a_{m+1}^{+}(\eps)<L$. Since $\liml_{\eps\to
0}a_{m+2}^{-}(\eps)=\infty$ then $a_{m+2}^{-}(\eps)>L_1$ when
$\eps$ is small enough. Hence $\sigma(\mathcal{A}\e)\cap
[L,L_1]=\varnothing$ when $\eps$ is small enough. We obtain a
contradiction with condition (\ref{bh}) of the Hausdorff
convergence. Thus $\liml_{\eps\to 0}a_{m+1}^{+}(\eps)= \infty$.
\end{proof}

Lemma \ref{lm4} implies that for an arbitrary $L>0$ the spectrum
$\sigma(\mathcal{A}\e)$ has at most $m$ gaps in the interval
$[0,L]$ when $\eps$ is small enough:
\begin{gather*}
\sigma(\mathcal{A}\e)\cap[0,L]=[0,L]\setminus
\cupl_{j=1}^{m\e}(\sigma_j\e,\mu_j\e)
\end{gather*}
where $(\sigma_j\e,\mu_j\e)\subset[0,L]$ are some pairwise
disjoint intervals, $m\e\leq m$. Here the intervals are renumbered
in the increasing order.

We have proved that $\sigma(\mathcal{A}\e)$ converges to
$[0,\infty) \setminus\left(\cupl_{j=1}^m(\sigma_j,\mu_j)\right)$
in the Hausdorff sense as $\eps\to 0$. Let $L$ be an arbitrary
number such that $L>\mu_m$.  Then, evidently,
$\sigma(\mathcal{A}\e)\cap [0,L]$ converges to $[0,L]
\setminus\left(\cupl_{j=1}^m(\sigma_j,\mu_j)\right)$ in the
Hausdorff sense. By Proposition \ref{prop1} $m\e=m$ when $\eps$ is
small enough and
\begin{gather*}
\forall j=1,\dots,m:\quad \lim_{\eps\to
0}\sigma_j\e=\sigma_j,\quad \lim_{\eps\to 0}\mu_j\e=\mu_j
\end{gather*}

Theorem \ref{th1} is proved.


\section{\label{sec7}Proof of Theorem \ref{th2}}

Substituting $a_j$, $b_j$ (\ref{exact_formula}) into
(\ref{sigmarho}) we get $$\sigma_j=\alpha_j$$  (i.e. the first
equality in (\ref{ab}) holds) and
\begin{gather}\label{prod}
\rho_j=({\b_j-\a_j})\prod\limits_{i=\overline{1,m}|i\not=
j}\ds\left({\b_i-\a_j\over  \a_i-\a_j}\right)
\end{gather}

Recall that $\mu_j$ ($j=\overline{1,m}$) are the roots of equation
(\ref{mu_eq}), therefore in order to prove the equalities
$\mu_j=\b_j$ ($j=\overline{1,m}$) we have to show that
\begin{gather}\label{system}
\forall k=1,\dots ,m:\ \suml_{j=1}^m{\rho_j\over \b_k-\a_j}=1
\end{gather}

Let us consider (\ref{system}) as a system of $m$ linear algebraic
equations ($\rho_j$, $j=1,\dots,m$ are unknowns). It is clear that
(\ref{system}) follows from the following
\begin{lemma}\label{lm1}
The system (\ref{system}) has the unique solution
$\rho_1,\dots,\rho_m$ which is defined by (\ref{prod}).
\end{lemma}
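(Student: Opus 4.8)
The plan is to exploit the Cauchy-matrix structure of the system. Writing the coefficient matrix as $M=\{M_{kj}\}$ with $M_{kj}=(\beta_k-\alpha_j)^{-1}$, condition (\ref{intervals}) guarantees that $\alpha_1,\dots,\alpha_m$ are pairwise distinct, that $\beta_1,\dots,\beta_m$ are pairwise distinct, and that $\beta_k\neq\alpha_j$ for all $k,j$; hence all entries are well-defined and $M$ is a genuine Cauchy matrix. Both the solvability of (\ref{system}) by the prescribed $\rho_j$ and the uniqueness of the solution will follow from the standard translation of such systems into polynomial identities, and the polynomial argument will simultaneously reproduce the explicit formula (\ref{prod}).

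First I would check that the numbers $\rho_j$ defined by (\ref{prod}) solve (\ref{system}). Introduce the two polynomials $P(\lambda)=\prod_{j=1}^m(\alpha_j-\lambda)+\sum_{j=1}^m\rho_j\prod_{i\neq j}(\alpha_i-\lambda)$ and $\widetilde P(\lambda)=\prod_{k=1}^m(\beta_k-\lambda)$. Both have degree $\le m$ and the same leading coefficient $(-1)^m$, so $Q:=P-\widetilde P$ has degree $\le m-1$. Evaluating at $\lambda=\alpha_k$, the first product vanishes and every term of the sum with $j\neq k$ contains the factor $\alpha_k-\alpha_k=0$, so $Q(\alpha_k)=\rho_k\prod_{i\neq k}(\alpha_i-\alpha_k)-\prod_{l=1}^m(\beta_l-\alpha_k)$, which is zero by the very definition (\ref{prod}) of $\rho_k$. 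Thus $Q$ vanishes at the $m$ distinct points $\alpha_1,\dots,\alpha_m$ and therefore $Q\equiv 0$, i.e. $P\equiv\widetilde P$. Dividing the identity $P(\lambda)=\widetilde P(\lambda)$ by $\prod_{j=1}^m(\alpha_j-\lambda)$ and substituting $\lambda=\beta_k$ (which is not a pole, since $\beta_k\neq\alpha_j$) turns the left-hand side into $1+\sum_{j=1}^m\rho_j/(\alpha_j-\beta_k)$ and the right-hand side into $0$; this is precisely the $k$-th equation of (\ref{system}).

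For uniqueness it suffices to show that the homogeneous system $\sum_{j=1}^m c_j/(\beta_k-\alpha_j)=0$, $k=1,\dots,m$, has only the trivial solution, i.e. that the Cauchy matrix $M$ is non-singular. Given such $c_j$, the polynomial $\sum_{j=1}^m c_j\prod_{i\neq j}(\alpha_i-\lambda)$ has degree $\le m-1$ and vanishes at the $m$ distinct points $\beta_1,\dots,\beta_m$, hence is identically zero; evaluating it at $\lambda=\alpha_k$ leaves only the $j=k$ term and yields $c_k\prod_{i\neq k}(\alpha_i-\alpha_k)=0$, so $c_k=0$ for every $k$. Combining this with the previous paragraph gives the lemma.

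There is no substantial obstacle here. The only step requiring care is the degree-and-leading-coefficient bookkeeping that legitimises the passage ``agreement at $m$ points forces equality of two degree-$m$ polynomials'', together with the observation that the substitution $\lambda=\beta_k$ is admissible; both rest on the strict interlacing in (\ref{intervals}). One could alternatively invoke the classical determinant formula for a Cauchy matrix, but the polynomial computation above is self-contained and also explains where (\ref{prod}) comes from.
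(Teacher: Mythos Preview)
Your proof is correct, but it follows a genuinely different route from the paper's. The paper proves Lemma~\ref{lm1} by induction on $m$: multiplying the $k$-th equation of (\ref{system}) by $\beta_k-\alpha_m$ and subtracting the last equation from the others, one obtains a system of the same shape in the $m-1$ unknowns $\hat\rho_j=\rho_j(\alpha_m-\alpha_j)/(\beta_m-\alpha_j)$, to which the induction hypothesis applies; the formula for $\rho_m$ then follows by symmetry. Your argument instead exploits the Cauchy structure directly via the polynomial identity $\prod_j(\alpha_j-\lambda)+\sum_j\rho_j\prod_{i\neq j}(\alpha_i-\lambda)=\prod_k(\beta_k-\lambda)$, which is nothing but the numerator identity behind the partial-fraction representation of $\mathcal{F}(\lambda)$ in (\ref{mu_eq}). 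This approach is more conceptual: it makes transparent that (\ref{prod}) is exactly the Lagrange-interpolation residue at $\alpha_j$, and it simultaneously shows $\mathcal{F}(\beta_k)=0$, tying the computation back to the spectral picture in Section~\ref{sec4}. The paper's induction, on the other hand, is entirely self-contained and avoids any reference to polynomial interpolation or Cauchy determinants. Both proofs use only the distinctness of the $\alpha_j$, of the $\beta_k$, and the fact that $\beta_k\neq\alpha_j$, all of which are guaranteed by (\ref{intervals}).
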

\begin{proof}  We prove the lemma by induction. For $m=1$ its
validity is obvious. Suppose that we have proved it for $m=N-1$.
Let us prove it for $m=N$.

Multiplying the $k$-th equation in (\ref{system}) ($k=1,\dots,N$)
by $\b_k-\a_N$ and then subtracting the $N$-th equation from the
first $N-1$ equations we obtain a new system
\begin{gather*}
\forall k=1,\dots, N-1:\ \suml_{j=1}^{N-1}{\hat\rho_j\over
\b_k-\a_j}=1
\end{gather*}
where the new variables $\hat\rho_j$, $j=1,\dots, N-1$ are
expressed in terms of $\rho_j$ by the formula
\begin{gather}\label{new}
\hat\rho_j:=\rho_j\ds{\a_N-\a_j\over \b_N-\a_j},\ j=1,\dots, N-1
\end{gather}
Hence $\hat\rho_j$, $j=\overline{1,N-1}$ satisfy the system
(\ref{system}) with $m=N-1$. By the induction
\begin{gather}\label{syst_sol_ind}
\hat\rho_j=(\b_j-\a_j)\prod\limits_{i=\overline{1,N-1}|i\not=
j}\ds\left({\b_i-\a_j\over  \a_i-\a_j}\right)
\end{gather}
It follows from (\ref{new}), (\ref{syst_sol_ind}) that $\rho_j$
($j=1,\dots,N-1$) satisfy (\ref{prod}) (with $m=N$). The validity
of this formula for $\rho_N$ follows easily from the symmetry of
system (\ref{system}). Lemma \ref{lm1} is proved.
\end{proof}
Theorem \ref{th2} is proved.

\section*{Acknowledgements} The author is deeply grateful to Professor E. Khruslov
for the helpful discussion. The work is partially supported by the
M.V. Ostrogradsky research grant for young scientists.


\end{document}